\documentclass[15pt]{amsart}
\usepackage[all,cmtip]{xy}
\usepackage{amsthm}
\usepackage{xypic}
\usepackage{graphicx}
\usepackage{amscd}

\usepackage[colorlinks=false, linkcolor=red]{hyperref}

\newcommand{\M}{\overline{\mathcal{M}}}

\newcommand{\Mor}{\textbf{Mor}}
\newcommand{\Fano}{\mathrm{Fano}}

\newcommand{\C}{\mathbb{C}}
\newcommand{\CH}{\mathrm{CH}}
\newcommand{\II}{\mathcal{I}}
\newcommand{\im}{\textbf{Im}}

\newcommand{\PP}{\mathbb{P}}

\newcommand{\OO}{\mathcal{O}}

\newtheorem{theorem}{Theorem}[section]
\newtheorem{definition}{Definition}[section]

\newtheorem{cor}[theorem]{Corollary}
\newtheorem{lemm}[theorem]{Lemma}
\newtheorem{prop}[theorem]{Proposition}
\newtheorem{propo}[theorem]{Proposition}


\newtheorem{question}[theorem]{Question}

\newtheorem{example}[theorem]{Example}

\DeclareMathOperator{\Fa}{F}
\DeclareMathOperator{\Spec}{Spec}
\DeclareMathOperator{\AJ}{AJ}
\DeclareMathOperator{\Span}{Span}
\DeclareMathOperator{\pr}{Pr}
\DeclareMathOperator{\Fl}{Fl}
\DeclareMathOperator{\Ho}{H}

\DeclareMathOperator{\Sym}{Sym}

\setcounter{tocdepth}{1}
\numberwithin{equation}{theorem}
\title []{2-Cycles on Higher Fano Hypersurfaces}

\author {xuanyu pan}
\address{Department of Mathematics, Washington University in St.Louis, St.Louis, MO 63130}
\email{pan@math.wustl.edu}
\date{\today}
\begin{document}
\begin{abstract}
Let $X_d$ be a smooth hypersurface of degree $d$ in $\PP_{\C}^n$. Suppose that the Fano variety $\mathrm{F(X_d)}$ of lines of $X_d$ is smooth. We prove that the Griffiths group $\mathrm{Griff}_1(\mathrm{F}(\mathrm{X_d}))$ of $\mathrm{F(X_d)}$ is trivial if the hypersurface $\mathrm{X_d}$ is of 2-Fano type. As a result, we give a positive answer to a question of Professor Voisin about the first Griffiths groups of Fano varieties in some cases. Base on this result, we prove that $\CH_2(\mathrm{X_d})=\mathbb{Z}$ for a smooth $3$-Fano hypersurface $\mathrm{X_d}\subseteq \PP^n_{\mathbb{C}}$ with smooth Fano variety of lines. 
\end{abstract}

\maketitle
\tableofcontents
\section{Introduction}
A fundamental question about cycles on a smooth projective variety $X$ over complex numbers is to determine the Griffiths groups of $X$. Let us recall the definition of the first Griffiths group \[\mathrm{Griff}_1(X)=\frac{\CH_1(X)_{hom}}{\CH_1(X)_{alg}}.\] Professor Voisin asks the following question:
\begin{question}\label{q1}
Is the $\mathrm{Griffiths}$ group  $\mathrm{Griff}_1(X)$ of a Fano variety $X$ (or more generally, a rationally connected variety) over complex numbers  trivial?
\end{question}
In general, to answer this question is very difficult. However, in the case of dimension at most three, Bloch and Srinivas give a positive answer, see \cite{BS}. Recently, Tian and Zong give a positive answer for Fano complete intersections in a projective space, see \cite{TZ}.
Another fundamental question about cycles is to determine the Chow groups of a variety. The geometry of Chow groups is very delicate. For instance,  Mumford proves that the Chow group of zero cycles $\CH_0(S)$ of a K3 surface $S$ is infinitely dimensional, see \cite{mumford}. For a Fano variety $X$ over complex numbers, J\'anos Koll\'ar, Yoichi Miyaoka, and Shigefumi Mori prove that \[\CH_0(X)=\mathbb{Z}\]since $X$ is rationally connected, see \cite{KMM}. In the paper \cite{HF}, de Jong and Starr introduce a concept of higher Fano varieties. Typical examples of higher Fano varieties are low degree hypersurfaces. In the paper \cite{TZ}, Tian and Zong prove that \[\CH_1(X_d)=\mathbb{Z}\] for a smooth hypersurface $X_d$ of 2-Fano type, see \cite{TZ} for details. In this paper, we use recent techniques from the rational curves on algebraic varieties due to de Jong and Starr, Harris and Roth (cf. \cite{DS},\cite{S1} and \cite{S2}) to give a positive answer to the question (\ref{q1}) in the case of Fano varieties of lines. As a result, we prove that $\CH_2(X_d)=\mathbb{Z}$ for a general smooth $3$-Fano hypersurface $X_d$ over complex numbers. More precisely, we have
\begin{theorem}\label{mainthmab}
Let $X_d$ be a hypersurface of degree $d\geq 3$ in $\PP^n_{\mathbb{C}}$.
\begin{enumerate}
\item Suppose that the Fano variety $\Fa(X_d)$ of lines is smooth. If $d\geq 4$ and \[n\geq 3\binom {d+1}{2}-d-4,\]or $d=3$ and $n\geq 14$, then $\mathrm{Griff}_1(\Fa(X_d))=0.$
\item  If the Fano variety $\Fa(X_d)$ of lines is smooth and \[\frac{d(2d^2+1)}{3}\leq n,\] then $\CH_2(X_d)=\mathbb{Z}$.
\end{enumerate}
\end{theorem}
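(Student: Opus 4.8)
The plan is to prove part (1) by converting the higher Fano geometry of $X_d$ into rational connectivity of parameter spaces on $F=\Fa(X_d)$, and then to deduce part (2) from part (1) through the universal line correspondence. For part (1), $F$ is smooth by hypothesis, and since $d\ge 3$ and $n$ meets the stated bound, $X_d$ is of $2$-Fano type; by de Jong--Starr this forces $F$ to be rationally connected and, more strongly, forces the parameter spaces of low-degree rational curves on $F$ to be rationally connected (rational simple connectedness). To show $\mathrm{Griff}_1(F)=0$ I would combine the Bloch--Srinivas decomposition of the diagonal of $F$ --- available because $\CH_0(F)=\mathbb{Z}$ --- with this abundance of rational curves: the decomposition supports every homologically trivial $1$-cycle, up to torsion, on a proper closed subset, and the rational connectivity of the relevant spaces of lines and conics on $F$ then lets me connect such a cycle to zero inside an algebraic family, so that it is algebraically trivial. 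The explicit numerical bounds are consumed precisely in guaranteeing that these parameter spaces are irreducible of the expected dimension and rationally connected, and in controlling torsion; this verification, following the curve-counting techniques of Harris--Roth and Starr, is the technical core of part (1).

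For part (2) I would transport information from $F$ to $X_d$ through the universal line. Let $\mathcal{L}\subseteq F\times X_d$ be the universal line, $p\colon \mathcal{L}\to F$ the $\PP^1$-bundle projection and $q\colon \mathcal{L}\to X_d$ the evaluation; these define a correspondence $\Phi=q_*p^*\colon \CH_1(F)\to \CH_2(X_d)$ carrying a curve in $F$ to the surface it sweeps out in $X_d$. As an algebraic correspondence, $\Phi$ respects both homological and algebraic equivalence, so it induces $\Phi_*\colon \mathrm{Griff}_1(F)\to \mathrm{Griff}_2(X_d)$. I first record that the hypothesis $\tfrac{d(2d^2+1)}{3}\le n$ is the $3$-Fano inequality for $X_d$ and is stronger than the bounds of part (1), so $\mathrm{Griff}_1(F)=0$ is at our disposal; moreover $\CH_0(X_d)=\mathbb{Z}$ and $\CH_1(X_d)=\mathbb{Z}$ by Koll\'ar--Miyaoka--Mori and Tian--Zong.

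The key geometric step is the surjectivity of $\Phi_*$, equivalently that every $2$-cycle on $X_d$ is rationally equivalent, modulo the class $h^{n-3}$, to one swept out by a family of lines. For this I would run an iterated Bloch--Srinivas decomposition of the diagonal of $X_d$: the triviality of $\CH_0$ and $\CH_1$ lets me write $\Delta_{X_d}$ as a sum of terms supported on low-dimensional loci plus a term factoring through the incidence variety $\mathcal{L}$, and applying this to an arbitrary $2$-cycle rewrites it through lines. The $3$-Fano hypothesis makes lines sufficiently abundant through surfaces for this to succeed, and this is where the bound on $n$ is again used. Granting surjectivity, the vanishing $\mathrm{Griff}_1(F)=0$ from part (1) immediately gives $\mathrm{Griff}_2(X_d)=0$, that is $\CH_2(X_d)_{hom}=\CH_2(X_d)_{alg}$.

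It remains to establish $\CH_2(X_d)_{alg}=0$, and this integral statement is the step I expect to be the main obstacle. Over $\mathbb{Q}$ it is automatic from the triviality of the lower Chow groups by Bloch--Srinivas--Paranjape, so the real difficulty is entirely in the torsion. I would attack it again through $\Phi$: an algebraically trivial $2$-cycle swept by lines is the image of a $1$-cycle on $F$ whose cohomology class is annihilated, and controlling such cycles reduces to understanding the algebraically trivial $1$-cycles on $F$ together with the intermediate Jacobian of $F$. Because $F$ is rationally connected, its relevant Hodge-theoretic invariants are as degenerate as possible, and I would use this, together with a torsion argument for the Abel--Jacobi map, to force the vanishing. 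Once $\CH_2(X_d)_{alg}=0$ is known, $\mathrm{Griff}_2(X_d)=0$ yields $\CH_2(X_d)_{hom}=0$, so the cycle class map embeds $\CH_2(X_d)$ into $H_4(X_d,\mathbb{Z})=\mathbb{Z}$ (Lefschetz, valid since $4<n-1$ in our range), and a ruled surface swept out by a suitable family of lines realizes a generator; therefore $\CH_2(X_d)=\mathbb{Z}$.
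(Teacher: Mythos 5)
Your overall architecture (work on $F=\Fa(X_d)$, push $1$-cycles to $2$-cycles through the universal line, reduce everything to torsion) matches the paper, but at both places where the actual work happens your proposal substitutes a mechanism that cannot deliver the \emph{integral} statements required. For part (1), the Bloch--Srinivas decomposition of the diagonal only holds after multiplying by some integer $N$, so any conclusion you extract from it about $1$-cycles is a statement modulo torsion and modulo $N$; since $\mathrm{Griff}_1$ can a priori contain torsion, this does not prove $\mathrm{Griff}_1(F)=0$. The paper's route is different and genuinely integral: by Tian--Zong, $\CH_1(F)$ is generated by rational curves; by a Lefschetz/homotopy-fiber-sequence argument, $\Ho_2(F,\mathbb{Z})=\mathbb{Z}\langle[l]\rangle$; and then the technical core is the \emph{connectedness of $\M_{0,0}(F,b)$ for every degree $b$}, proved by induction using bend-and-break, conic deformations of dual graphs, and the geometry of quadrics (for $b=2$). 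Rational connectivity of the spaces of lines and conics alone, which is all you invoke, does not handle a rational curve of class $b[l]$ for $b\geq 3$; and ``rational simple connectedness'' of $X_d$ concerns moduli of curves on $X_d$, not on $F$, so it does not give you the needed statement about parameter spaces on $F$.

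For part (2) the decisive issue is again torsion, and your proposed tools (intermediate Jacobians, degeneracy of Hodge-theoretic invariants, ``a torsion argument for the Abel--Jacobi map'') do not control torsion in Chow groups --- that is exactly the part of the problem that Hodge theory misses. The paper kills $\CH_2(X_d)_{tor}$ by exhibiting two concrete properties that together force it to vanish: (a) $\CH_2(X_d)_{hom}$ is \emph{divisible}, because the Abel--Jacobi map $\CH_1(F)\to\CH_2(X_d)$ is surjective (proved by Tsen--Lang applied to the space of reducible conics through a point --- an integral surjectivity, with no Bloch--Srinivas denominator) and $\CH_1(F)_{hom}=\CH_1(F)_{alg}$ is divisible by part (1); and (b) $d\cdot\CH_2(X_d)_{tor}=0$, proved by explicit $2$-plane geometry: every family of lines extends to a family of $2$-planes (Tsen--Lang again), all $2$-planes in $X_d$ are rationally equivalent, and $X_d$ contains a $\PP^3$, giving $d[S]=\sum\pm[P_i]+[V]\cdot c_1(\OO_{X_d}(d))$ and hence $d\cdot a=0$ for torsion $a$. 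A divisible group annihilated by $d$ is zero, and the main theorem of Esnault--Levine--Viehweg then gives $\CH_2(X_d)=\mathbb{Z}$. Your proposal contains neither the divisibility step nor the $d$-annihilation step, and without a concrete replacement the torsion vanishing --- which you correctly identify as the main obstacle --- remains unproved.
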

We know the assumption of the smoothness of $\Fa(X_d)$ always holds if $d=3$ or $X_d$ is a general hypersurface.

Let us briefly describe the structure of this paper. In Section 2 and 3, we use the Tsen-Lang Theorem to show that $\CH_2(X_d)$ can be generated by ruled surfaces and the torsion part of $\CH_2(X_d)$ is annihilated by $d$ if $X_d$ is of $3$-Fano. In Section 4, we use the fibration structure of $\Fa(X_d)$ and the homotopy fiber sequence to prove the second homology group of $\Fa(X_d)$ is torsion-free and generated by the class of lines under some hypothesis.

 In Section 5, 6 and 7, we systematically use the techniques of rational curves such as bend-and-break, smoothing curves and the geometry of quadrics to show the connectedness of the moduli spaces of rational curves on $\Fa(X_d)$. As a result, we show Theorem \ref{mainthmab}.

\textbf{Acknowledgments.} The author would like to thank Professor~Jason Starr for his considerate explanation of his thesis to the author. The author thanks Professor~Luc~Illusie and Professor~Burt~Totaro for their interest in this project. The author also thanks Professor~Matt Kerr for giving a course on algebraic cycles in Washington University in St. Louis. One part of the paper is inspired during his course. At the end, the author is grateful for his truly great friend Dr.~Zhiyu Tian.

\section{Rationally Equivalent to Ruled Surfaces}
In this section, we suppose that $X_d$ is a hypersurface in $\PP^{n}_\mathbb{C}$ of degree d.
\begin{lemm}\label{rational}
Suppose that $E$ is a vector bundle of rank two on an algebraic scheme $M$. Let $\sigma_0$ and $\sigma_1$ be two sections of $\pi: \PP(E)\rightarrow M$. Then, we have
\[\sigma_0-\sigma_1=\pi^{*} ([N]) \in \CH^1(\PP(E))\]
where $N$ is a cycle of codimension one on $M$.
\end{lemm}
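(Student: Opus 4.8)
The plan is to invoke the projective bundle formula for Chow groups. Since $E$ has rank two, $\pi:\PP(E)\rightarrow M$ is a $\PP^1$-bundle, and writing $\xi=c_1(\OO_{\PP(E)}(1))$, the projective bundle theorem (Fulton, \emph{Intersection Theory}, Thm.~3.3) gives a decomposition in which every class of $\CH^1(\PP(E))$ is \emph{uniquely} of the form $a\,\xi+\pi^{*}(\beta)$, with $a\in\CH^0(M)$ and $\beta\in\CH^1(M)$. Each section $\sigma_i$ has image of codimension one in $\PP(E)$, hence defines a class $[\sigma_i]\in\CH^1(\PP(E))$, and I would begin by expanding it as $[\sigma_i]=a_i\,\xi+\pi^{*}(\beta_i)$.

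The key step is to pin down the leading coefficients $a_i$, and for this I would restrict to a general fiber $j:\PP^1_x=\pi^{-1}(x)\hookrightarrow\PP(E)$ and compare degrees in $\CH^1(\PP^1_x)=\mathbb{Z}$. On the one hand, $j^{*}\xi=c_1(\OO_{\PP^1}(1))$ has degree $1$; on the other hand $j^{*}\pi^{*}(\beta_i)=0$, since $\pi\circ j$ is constant and $\beta_i$ has positive codimension (for general $x$ the fiber $\PP^1_x$ misses the support of $\beta_i$). Because $\sigma_i$ is a section, its image meets each fiber in exactly one reduced point, so $j^{*}[\sigma_i]$ has degree $1$. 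Comparing degrees forces $a_0=a_1=1$. Subtracting the two expansions then yields $[\sigma_0]-[\sigma_1]=\pi^{*}(\beta_0-\beta_1)$, so the statement holds with $N=\beta_0-\beta_1$, a cycle of codimension one on $M$.

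I do not anticipate a genuine obstacle here; the only point requiring care is that $M$ is assumed merely to be an algebraic scheme, not smooth or irreducible. The projective bundle formula is valid for arbitrary schemes of finite type over the base field, so both the decomposition and the fiberwise degree computation go through verbatim; if $M$ is reducible one simply runs the coefficient argument on each component, where $\CH^0$ is free on the set of components and the coefficient of $\xi$ is $1$ on every one of them.
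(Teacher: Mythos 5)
Your proof is correct and follows essentially the same route as the paper: both arguments rest on the projective bundle decomposition $\CH_l(\PP(E))\cong \CH_{l-1}(M)\oplus \CH_l(M)$ and the observation that the two section classes have the same component along $c_1(\OO_{\PP(E)}(1))$, so that their difference is a pullback from $M$. The only difference is cosmetic --- the paper simply asserts that both sections have second coordinate $h\cap\pi^*([M])$, whereas you justify this by restricting to a general fiber and comparing degrees, which is a reasonable way to make that step explicit.
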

\begin{proof}
By \cite{F}, we have \begin{equation}\label{chow}
\xymatrix{ \CH_{l-1}(M)\bigoplus \CH_{l}(M) \ar[rr]^{\cong} &&\CH_l(\PP(E))}
\end{equation}
\[(Z_0,Z_1) \mapsto\pi^*(Z_0)+h\cap \pi^*(Z_1)\]
where $Z_0\in \CH_{l-1}(M)$, $Z_1\in \CH_{l}(M)$ and $h=c_1(\OO_{\PP(E)}(1))$. If $l=\dim(M)$, then the sections $[\sigma_i(M)]\in \CH^1(\PP(E))$ have the same second coordinate $h\cap \pi^*([M])$ by the identification (\ref{chow}). In other words, $[\sigma_i(M)]=(-,h\cap \pi^*([M]))$. Therefore, we have
\[\sigma_0-\sigma_1=\pi^{*} ([N])\]for some $[N]\in \CH_{l-1}(M)$.
\end{proof}
Let $\M_{0,2}(X_d,\PP^1\vee \PP^1)$ be the Kontsevich space parametrizing the reducible conics on $X_d$ with one marked point on each component. Denote $\M_{0,2}(X_d,\PP^1\vee \PP^1)$ by $\M_2$. It is clear that we have the forgetful map \[\pi:\M_2\rightarrow \M_{0,1}(X_d,\PP^1\vee \PP^1):=\M_1\]forgetting the second marked point.

There is a commutative diagram of evalution maps
\[\xymatrix{\M_2 \ar[d]^{\pi} \ar[rr]^{(ev_1,ev_2)} && X_d\times X_d\ar[d]^{\pi_1}\\
\M_1=\M_{0,1}(X_d,\PP^1\vee \PP^1)\ar@/^1pc/[u]^{\sigma_1} \ar[rr]^{ev} && X_d}.\]
where $\sigma_1$ is the universal section of $\pi$ and $\pi_1$ is the projection onto the first factor.

Let $\M_{1,p}$ be the fiber  $ev^{-1}(p)$ of $ev$ over $p\in X_d$. So we have maps
\begin{equation}\label{moduli}
\xymatrix{ C=\M_2|_{\M_{1,p}}\ar[rr]^{f=ev_2} \ar[d] &&X_d\\
\M_{1,p} \ar@/^1pc/[u]^{\sigma_2} \ar@/^3pc/[u]^{\sigma_1}}
\end{equation}
such that $\im (ev_2\circ \sigma_1) =\{p\}$ and $\sigma_2$ is the section induced by the singular locus of $\pi$. More precisely, suppose that $s\in \M_{1,p}$ parametrizes a reducible conic $L_1\cup L_2$ with the wedge point $q= L_1\cap L_2$, then $\sigma_2(s)=q$. Therefore, the fiber $f^{-1}(q)$ of $f$ over $q$ is the fiber $(ev_1,ev_2)^{-1}(p,q)$. 
In other words, the fiber $f^{-1}(q)$ parametrizes the reducible conics (on $X_d$) passing through $p$ and $q$, so $f^{-1}(q)$ is a complete intersection, see \cite{DS} or \cite[Page 82(2)]{DS2}. Moreover, the general fiber of $f$ is a complete intersection in $\PP^n$ of type
\[(1,1,2,2,3,3\ldots,d-1,d-1,d).\]
Suppose that $S$ is a surface passing through a general point of $X_d$. It follows that the preimage $f^{-1}(\Spec (\C(S))$ of the generic point of $S$ is a complete intersection in $\PP^n_{\C(S)}$ of type
\[(1,1,2,2,3,3\ldots,d-1,d-1,d).\]
In particular, if the square sum of these degrees \[2(\sum\limits_{i=1}^{d-1} i^2)+d^2=\frac{d(2d^2+1)}{3}\]is less than $n+1$, then, by the Tsen-Lang Theorem \cite{Lang}, there exists a surfaces $S'\subseteq C$ such that $f|_{S'}$ is generic one-to-one onto $S$. In other words, we have $f_*([S'])=[S]$ in $\CH_2(X_d)$. Here, we do not need to worry about the stacky issue since $S'$ passes through a non-stacky point, 
see \cite[Section 5 and 6]{V} for the details. Therefore, we have the following diagram induced by the diagram (\ref{moduli}) 
\[\xymatrix{C'\ar[r]^g \ar[d] \ar@{}[dr]|-{\Box} &C\ar[r]^f \ar[d]^{\pi}& X_d\\
S'\ar@/^1pc/[u]^{\sigma_1} \ar@/^2.5pc/[u]^{\sigma_2} \ar@/^4pc/[u]^{\Delta}  \ar[r]_{\pi|_{S'}} &\M_{1,p}}\]
where $\Delta$ is induced by the diagonal map of $S'$. It is clear that \[f\circ g\circ \Delta=f|_{S'} ~\text{and}~ \im(f\circ g\circ \sigma_1) =\{p\}.\]By Lemma \ref{rational}, the following cycles are rationally equivalent
\[[\Delta(S')] \sim_{rat} [\sigma_2(S')] \sim_{rat}[\sigma_1(S')]\]
in $\CH_2(C')$ mod $\pi^* \CH_1(S')$. In particular, \[[S]=[f\circ g\circ \Delta(S')] \sim_{rat} [f\circ g\circ \sigma_1(S)]=[p]=0\]
in $\CH_2(X_d)$ mod $\im\left((f\circ g)_*\circ \pi^*:\CH_1(S')\rightarrow \CH_2(X_d)\right)$. Therefore, we have \[[S]\in \im\left((f\circ g)_*\circ \pi^*:\CH_1(S')\rightarrow \CH_2(X_d)\right).\] We conclude that $[S]$ is the formal sum of some ruled surfaces whose fibers are lines. In particular, the Abel-Jacobi map $\AJ$ is surjective
\begin{equation}\label{AJ}
\AJ:\CH_1(\Fa(X_d))\rightarrow \CH_2(X_d)
\end{equation}
where $\Fa(X_d)$ is the Fano variety of lines of $X_d$. In summary, we show the following proposition.

\begin{propo} \label{rationaleqruledsurf}
Every surface $S$ passing through a general point in $X_d$ is rationally equivalent to the formal sum of ruled surfaces if \begin{equation}\label{inequality}
\frac{d(2d^2+1)}{3}\leq n.
\end{equation}
\end{propo}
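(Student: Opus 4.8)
The plan is to exhibit every such surface $S$ as the image, under an evaluation map, of a ``section surface'' sitting inside a universal family of reducible conics on $X_d$, and then to exploit the rank-two projective bundle structure of that family, via Lemma \ref{rational}, to slide this section to one that collapses onto the fixed point $p$, at the cost of a $\pi^*$-pullback term. Pushing everything forward by the evaluation map, the collapsed section contributes only the class $[p]=0$, while the pullback term becomes a formal sum of ruled surfaces whose fibers are lines.

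Concretely, I would work with the Kontsevich space $\M_2=\M_{0,2}(X_d,\PP^1\vee\PP^1)$ of reducible conics carrying a marked point on each component, together with the evaluation maps $(ev_1,ev_2)$ and the forgetful map $\pi$. Fixing a general point $p\in X_d$ and restricting to the fiber $\M_{1,p}=ev^{-1}(p)$ produces the family $f=ev_2\colon C=\M_2|_{\M_{1,p}}\to X_d$ of diagram (\ref{moduli}). The essential geometric input is the identification of the fiber $f^{-1}(q)$ with the space of reducible conics passing through the two points $p$ and $q$: this space is cut out by incidence conditions and is a complete intersection, and the general such fiber is a complete intersection in $\PP^n$ of type $(1,1,2,2,\ldots,d-1,d-1,d)$. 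I expect this step — pinning down the precise degrees of the equations defining a fiber of $f$ — to be the main obstacle, and I would rely on the explicit description of the space of conics through two points in \cite{DS} to justify it.

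Next I would restrict $f$ over the generic point of $S$. Because $S$ passes through a general point, the fiber $f^{-1}(\Spec\C(S))$ is a complete intersection over the function field $\C(S)$ of the same type, so the sum of the squares of its degrees equals $2\sum_{i=1}^{d-1}i^2+d^2=\tfrac{d(2d^2+1)}{3}$. Under the hypothesis (\ref{inequality}) this quantity is at most $n$, hence strictly less than $n+1$, and the Tsen--Lang Theorem \cite{Lang} guarantees a $\C(S)$-rational point of this fiber. Spreading it out yields a surface $S'\subseteq C$ with $f|_{S'}$ generically one-to-one onto $S$, so that $f_*([S'])=[S]$ in $\CH_2(X_d)$; here one checks, as in \cite[Section 5 and 6]{V}, that $S'$ meets only the non-stacky locus, so no orbifold multiplicities intervene.

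Finally I would pull the family $C$ back along $\pi|_{S'}\colon S'\to\M_{1,p}$ to obtain $C'\to S'$, which carries the rank-two projective bundle structure required by Lemma \ref{rational} and admits three natural sections: the diagonal $\Delta$ coming from $S'$, the section $\sigma_2$ given by the singular locus of the conics, and the section $\sigma_1$ whose image under $f\circ g$ is the constant point $p$. Lemma \ref{rational} shows that the classes of any two of these sections differ by $\pi^*$ of a codimension-one cycle on $S'$, so
\[[S]=(f\circ g)_*[\Delta(S')]\sim_{rat}(f\circ g)_*[\sigma_1(S')]=[p]=0\]
modulo the image of $(f\circ g)_*\circ\pi^*\colon\CH_1(S')\to\CH_2(X_d)$. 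Since the pushforward of a pullback $\pi^*$ of a curve on $S'$ sweeps out a family of lines — the components of the conics — over that curve, every element of this image is a formal sum of ruled surfaces with line fibers. Therefore $[S]$ itself is such a formal sum, which is the assertion.
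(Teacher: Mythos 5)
Your proposal follows the paper's own argument essentially step for step: the same universal family of reducible conics $\M_2\to\M_1$ restricted over $\M_{1,p}$, the same identification of the fibers of $f=ev_2$ as complete intersections of type $(1,1,2,2,\ldots,d-1,d-1,d)$, the same Tsen--Lang application to produce the section surface $S'$, and the same use of Lemma \ref{rational} to slide $\Delta$ to $\sigma_1$ modulo $\pi^*\CH_1(S')$. No substantive differences to report.
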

Let $C$ be a projective curve. Suppose that $g$ is a morphism $g:C \rightarrow \Fa(X_d)$. We consider the following incidence subvariety $\II$
\begin{equation}\label{incidence}
 \xymatrix{ \II=\{(l,P)|l\subseteq P\} \ar[d]^h \ar@{^{(}->}[r] &\Fa(X_d)\times \Fa_2(X_d)\ar[dl]^{\pi_1} \ar[dr]^{\pi_2} \\
\Fa(X_d) & &\Fa_2(X_d) }
\end{equation}
where $l$ is a line in $X_d$, $P$ is a 2-plane in $X_d$ and $\Fa_2(X_d)$ is the Fano variety of 2-planes in $X_d$. We claim that the pull-back $h|_C$ of $h$ via $g$  has a section $s$.
\[\xymatrix{ g^{-1}\II  \ar@{}[dr]|-{\Box} \ar[d]_{h|_C} \ar[r] &\II \ar[d]^h \\
C\ar[r]^g  \ar@/^2pc/[u]^s & \Fa(X_d)}\]
We show the claim as follows. Recall that the space $\Fa_p$ of lines in $X_d$ throguh a point $p\in X_d$ is defined by the equations of $\PP^{n-1}$ of type
\[(1,2,3,4,\ldots,d-1,d),\]see \cite[Lemma 2.1]{Cubic}.
Therefore, the space $\Fano_q(\Fa_p)$ of lines in $\Fa_p$ through a point $q\in \Fa_p$ is defined by the equations in $\PP^{n-2}$ of type (complete intersection of equations of degree)
\begin{equation}\label{typeeq}
(1,1,2); (1,1,2,3);(1,1,2,3,4);\ldots;(1,1,2,3,\ldots d).
\end{equation}
Let $l$ be a line in $\Fa_p$ corresponding to a plane $\PP^2=\Span(l,p)$ in $X_d$. Suppose that $x$ is a point of $ \Fa(X_d)$ parametrizing a line $L_x\subseteq X_d$. The fiber $h^{-1}(x)$ is the space of 2-planes in $X_d$ containing the line $L_x$. 

Let $Q$ be a point on $L_x$. Suppose that $\Fa_Q$ is the subspace of $\Fa(X_d)$ parametrizing the lines through $Q$. Denote by $\Fano_{x}(\Fa_Q)$ the space of lines in $\Fa_Q$ parametrizing lines through $x\in \Fa_Q\subseteq \Fa(X_d)$. So the space $\Fano_{x}(\Fa_Q)$ is defined by the equations in $\PP^n$ of type as (\ref{typeeq}). Note that \[h^{-1}(x)=\Fano_{x}(\Fa_Q)\]
and the sum of (\ref{typeeq})
\begin{equation} \label{sumstar}
(d-1)+\sum \limits_{i=2}^d\frac{i(i+1)}{2}=d-2+\frac{d(d+1)(2d+1)}{12}+\frac{d(d+1)}{4}
\end{equation}
is less than $n-1$ if we assume the inequality (\ref{inequality}). So the map $h|_C$ has a section $s$ by the Tsen-Lang Theorem. We have proved the claim.

In particular, it produces a map as follows
\[
\pr_2\circ s:C\rightarrow \II\rightarrow \Fa_2(X_d).
\]Therefore, we show the following lemma:
\begin{lemm}\label{lineinplane}
Suppose that the morphism $g$ gives rise to a family $\mathcal{L}$ of lines on $X_d$. Then the morphism $\pr_2\circ s$ gives rise to a family $\mathcal{P}$ of 2-planes in $X_d$ which contains $\mathcal{L}$.
\[\xymatrix{\mathcal{L}\ar[dr] \ar@{^{(}->}[r] & \mathcal{P} \ar[r]^{H} \ar[d] & X_d\\
&C&}\]
\end{lemm}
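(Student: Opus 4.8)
The plan is to obtain both families as pullbacks of the tautological families living over the incidence variety $\II$, and to read off the inclusion $\mathcal{L}\hookrightarrow\mathcal{P}$ directly from the defining incidence relation $\{(l,P)\mid l\subseteq P\}$. All of the genuine work---the existence of the section $s$---has already been carried out in the preceding claim via the Tsen--Lang Theorem, so what remains is a formal identification of pullbacks.

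First I would set up the tautological families over $\II$. Pulling back the universal line over $\Fa(X_d)$ along $\pi_1\colon\II\to\Fa(X_d)$ produces a family of lines $\mathcal{U}_1\to\II$, and pulling back the universal $2$-plane over $\Fa_2(X_d)$ along $\pi_2\colon\II\to\Fa_2(X_d)$ produces a family of $2$-planes $\mathcal{U}_2\to\II$. Both sit inside $\II\times X_d$, and since over a point $(l,P)\in\II$ one has $l\subseteq P$ by definition, there is a tautological closed immersion $\mathcal{U}_1\hookrightarrow\mathcal{U}_2$ over $\II$, compatible with the two evaluation maps to $X_d$.

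Next I would pull everything back to $C$. Write $\iota\colon g^{-1}\II\to\II$ for the top horizontal map in the fiber-product square. Because $s$ is a section of $h|_C$ and the square is Cartesian over $h=\pi_1$, the composite $\pi_1\circ\iota\circ s$ equals $g$, so the pullback $(\iota\circ s)^{*}\mathcal{U}_1$ is canonically the family $\mathcal{L}=g^{*}(\text{universal line})$. On the other side, $\pi_2\circ\iota\circ s=\pr_2\circ s$ by construction, so $(\iota\circ s)^{*}\mathcal{U}_2$ is exactly the family $\mathcal{P}:=(\pr_2\circ s)^{*}(\text{universal }2\text{-plane})$, equipped with its projection to $C$ and its evaluation $H\colon\mathcal{P}\to X_d$. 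Pulling the tautological immersion $\mathcal{U}_1\hookrightarrow\mathcal{U}_2$ back along $\iota\circ s$ then yields the desired closed immersion $\mathcal{L}\hookrightarrow\mathcal{P}$ over $C$, and this immersion is compatible with $H$ because the immersion over $\II$ was compatible with the evaluation maps. This reproduces precisely the diagram in the statement.

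As for the main obstacle: there is essentially no geometric difficulty left, since the construction of $s$ already absorbed the dimension-count input from Tsen--Lang. The only point requiring care is the bookkeeping of the Cartesian square---one must verify that $s$, viewed through $\iota$, genuinely lifts $g$ (i.e.\ that $\pi_1\circ\iota\circ s=g$), which is exactly the assertion that $s$ is a section of $h|_C$. Once this identification is in hand, the containment $\mathcal{L}\hookrightarrow\mathcal{P}$ is inherited for free from the tautological relation defining $\II$, and no further computation is needed.
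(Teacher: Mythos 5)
Your proposal is correct and matches the paper, which in fact offers no separate proof of this lemma: it is stated as an immediate consequence of the existence of the section $s$ (established just before via the Tsen--Lang Theorem), with the families $\mathcal{L}$ and $\mathcal{P}$ and the inclusion between them obtained exactly as you describe, by pulling back the tautological families over the incidence variety $\II$. Your write-up merely makes the formal bookkeeping explicit, which is consistent with the paper's intent.
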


\section{The Torsion part of the Second Chow group}
In this section, we always assume that \begin{equation}\label{ineq}
\frac{d(2d^2+1)}{3}\leq n ~\text{and}~ d\geq 3.
\end{equation}
\begin{lemm} \label{lemm31}
With the notations as in section 2, we have
\[d\left(H_*([\mathcal{L}])\right)=\sum_i \pm [P_i] + [V]\cdot c_1(\OO_{X_d}(d))\]
holds in $\CH_2(X_d)$ where $\{P_i\}$ are 2-planes in $X_d$ and $V$ is $H_*([\mathcal{P}]) \in \CH_3(X_d)$ (see the diagram at the end of Section 2 before Lemma \ref{lineinplane} for the definition of $H$).
\end{lemm}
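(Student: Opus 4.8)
The plan is to exploit the projective-bundle structure of the family $\mathcal{P}\to C$ together with the projection formula for $H$. First I would observe that, by construction, $\mathcal{P}$ is the pull-back along $\pr_2\circ s:C\to \Fa_2(X_d)$ of the universal family of $2$-planes, and the latter is the projectivization of the tautological rank-three bundle on $\Fa_2(X_d)\subseteq \mathrm{Gr}(3,n+1)$. Hence $\mathcal{P}=\PP(\mathcal{E})$ for a rank-three bundle $\mathcal{E}$ on $C$; write $\pi:\mathcal{P}\to C$ for the projection and $\xi=c_1(\OO_{\PP(\mathcal{E})}(1))$ for the relative hyperplane class. Set $h=c_1(H^{*}\OO_{\PP^n}(1))$. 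Since $H$ embeds each fibre $\mathcal{P}_c\cong\PP^2$ linearly into $X_d\subseteq\PP^n$, the class $h$ restricts to $\OO_{\PP^2}(1)$ on every fibre, and so does $[\mathcal{L}]$, because $\mathcal{L}_c$ is exactly the line in the plane $\mathcal{P}_c$.

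Next I would apply the projective-bundle theorem of \cite{F}. Since $C$ is a curve, one has $\CH_2(\mathcal{P})=\mathbb{Z}\cdot(\xi\cap[\mathcal{P}])\oplus \pi^{*}\CH_0(C)$, the $\xi^2$-term vanishing because $\CH_2(C)=0$. Both $[\mathcal{L}]$ and $h\cap[\mathcal{P}]$ are divisor classes whose restriction to a general fibre is a line, so each has $\xi$-coefficient $1$; therefore their difference lies in $\pi^{*}\CH_0(C)$. Writing this difference as $\pi^{*}\beta$ with $\beta=\sum_j n_j[c_j]\in\CH_0(C)$, each $\pi^{*}[c_j]$ is a fibre of $\mathcal{P}$, i.e. a $2$-plane, which $H$ carries isomorphically onto a $2$-plane $P_j\subseteq X_d$. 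Pushing forward gives $H_*([\mathcal{L}])-H_*(h\cap[\mathcal{P}])=\sum_j n_j[P_j]$, a $\mathbb{Z}$-combination of $2$-planes.

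Finally I would multiply by $d$ and reinterpret the $h\cap[\mathcal{P}]$ term through the projection formula. Since $c_1(\OO_{X_d}(d))=d\,c_1(\OO_{X_d}(1))$ and $H^{*}c_1(\OO_{X_d}(1))=h$, the projection formula yields $d\,H_*(h\cap[\mathcal{P}])=H_*\big(H^{*}c_1(\OO_{X_d}(d))\cap[\mathcal{P}]\big)=c_1(\OO_{X_d}(d))\cap H_*[\mathcal{P}]=V\cdot c_1(\OO_{X_d}(d))$. Multiplying the displayed identity by $d$ and substituting, and absorbing the integers $d\,n_j$ into the signs and multiplicities of the planes, gives $d\left(H_*([\mathcal{L}])\right)=\sum_i\pm[P_i]+V\cdot c_1(\OO_{X_d}(d))$, as claimed.

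The step I expect to be the main obstacle is the bookkeeping that justifies the two fibrewise statements, namely that $\mathcal{P}\to C$ is a genuine $\PP^2$-bundle and that $H$ restricts to a linear embedding on each fibre, since these are what force the $\xi$-coefficients of $[\mathcal{L}]$ and of $h\cap[\mathcal{P}]$ both to equal $1$ and what identify the pushed-forward fibres with honest $2$-planes $P_i$ in $X_d$. The factor $d$ appearing in the statement is present precisely so that the relative hyperplane contribution can be rewritten through $\OO_{X_d}(d)$ without having to divide in the Chow group.
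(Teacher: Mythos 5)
Your proposal is correct and follows essentially the same route as the paper's proof: both identify $d[\mathcal{L}]-H^{*}c_1(\OO_{X_d}(d))\cap[\mathcal{P}]$ (equivalently, $d$ times $[\mathcal{L}]-h\cap[\mathcal{P}]$) as a $\mathbb{Z}$-combination of fibres of the $\PP^2$-bundle $\mathcal{P}\to C$ because the two classes agree on every fibre, and then conclude by the projection formula applied to $H$. Your explicit appeal to the projective-bundle decomposition of $\CH_2(\PP(\mathcal{E}))$ merely spells out the step the paper asserts directly from the fibrewise computation, so it is an elaboration rather than a different argument.
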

\begin{proof}
It is clear that \[[V]\cdot c_1(\OO_{X_d}(d))= H_*\left([\mathcal{P}]\cdot H^*(c_1(\OO_{X_d}(d)))\right)\] in $\CH_3(|V|\cap |X_d|)$, see \cite{F}. By Lemma \ref{lineinplane}, for the fiber $\mathcal{P}_y$ of $\mathcal{P}/C$ over the point $y\in C$, we have that
\begin{center}
$\PP^1=\mathcal{L}_y \subseteq \mathcal{P}_y=\PP^2$ and $[H^*([X_d])]|_{\mathcal{P}_y}=d[\mathcal{L}_y]$.
\end{center}
Therefore, we conclude that
\[d\cdot [\mathcal{L}]-H^*(c_1(\OO_{X_d}(d)))=\sum \pm \textbf{fibers of } \mathcal{P}/C=\sum \limits_{i=1}^n \pm [\mathcal{P}_i]\]
where $\{\mathcal{P}_i\}$ are 2-planes in $X_d$. It follows from the projection formula that
\[d\cdot H_*([\mathcal{L}])=\sum \limits_{i=1}^n \pm H_*([\mathcal{P}_i]) +c_1(\OO_{X_d}(d))\cdot H_*([\mathcal{P}])=\sum\limits_i \pm P_i +[V]\cdot c_1(\OO_{X_d}(d)).\]
\end{proof}

By the moving lemma, every 2-cycle of $X_d$ is rationally equivalent to the formal sum of some surfaces passing through general points of $X_d$. By Proposition \ref{rationaleqruledsurf} and Lemma \ref{lemm31}, we conclude the following corollary. 
\begin{cor}\label{corre}
Under the hypothesis as above, we have \[d[S]=\sum\limits_{i=0}^n \pm [P_i]+[V]\cdot c_1(\OO_{X_d}(d))\]holds in $\CH_2(X_d)$ where $S\in \CH_2(X_d)$, $[V]\in \CH_3(X_d)$ and $\{P_i\}$ are 2-planes in $X_d$.
\end{cor}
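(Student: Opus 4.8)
The plan is to chain together the three ingredients already in place: the moving lemma, Proposition \ref{rationaleqruledsurf}, and Lemma \ref{lemm31}. First I would apply the moving lemma to write an arbitrary class $[S] \in \CH_2(X_d)$ as a finite $\mathbb{Z}$-linear combination $[S] = \sum_k \pm [S_k]$, where each surface $S_k$ passes through a general point of $X_d$. This reduces everything to surfaces in general position, which is exactly the standing hypothesis of Proposition \ref{rationaleqruledsurf}.

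Next, for each such $S_k$, Proposition \ref{rationaleqruledsurf} expresses $[S_k]$ in $\CH_2(X_d)$ as a finite formal sum of ruled surfaces whose fibers are lines. A ruled surface ruled by lines in $X_d$ yields a morphism $g\colon C \to \Fa(X_d)$ from a (normalized) curve $C$, and hence a family $\mathcal{L}$ of lines on $X_d$ over $C$, so that each such ruled surface is realized as $H_*([\mathcal{L}])$ in the notation of Lemma \ref{lineinplane}. Therefore $[S]$ becomes a finite sum $\sum_j \pm H_*([\mathcal{L}_j])$ in $\CH_2(X_d)$, where each $\mathcal{L}_j$ is a family of lines to which Lemma \ref{lemm31} applies.

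I would then apply Lemma \ref{lemm31} termwise: for each $\mathcal{L}_j$ it gives $d\,H_*([\mathcal{L}_j]) = \sum_i \pm [P_{ij}] + [V_j]\cdot c_1(\OO_{X_d}(d))$ with $[V_j] = H_*([\mathcal{P}_j]) \in \CH_3(X_d)$ and the $P_{ij}$ 2-planes in $X_d$. Multiplying the expression for $[S]$ by $d$ and substituting yields
\[
d[S] = \sum_j \pm d\,H_*([\mathcal{L}_j]) = \sum_{i,j} \pm [P_{ij}] + \Big(\sum_j \pm [V_j]\Big)\cdot c_1(\OO_{X_d}(d)).
\]
Setting $[V] = \sum_j \pm [V_j] \in \CH_3(X_d)$ and relabeling the finitely many planes as $\{P_i\}$ gives the claimed identity.

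The argument is essentially formal once the earlier results are granted, so there is no deep obstacle; the only steps requiring care are bookkeeping and one identification. I must use the linearity of the intersection product $\,\cdot\, c_1(\OO_{X_d}(d))\colon \CH_3(X_d) \to \CH_2(X_d)$ to collect the various $[V_j]$ into a single class $[V]$, and I must justify that each ruled surface produced by Proposition \ref{rationaleqruledsurf} genuinely arises from a morphism $g\colon C\to\Fa(X_d)$ so that Lemma \ref{lemm31} is applicable. This latter point is harmless, since a surface ruled by lines of $X_d$ determines such a map after normalizing the base of the ruling.
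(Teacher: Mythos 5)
Your proposal is correct and follows exactly the route the paper intends: the paper's own justification is the single sentence ``By the moving lemma, every 2-cycle of $X_d$ is rationally equivalent to the formal sum of some surfaces passing through general points of $X_d$. By Proposition \ref{rationaleqruledsurf} and Lemma \ref{lemm31}, we conclude the following corollary,'' and you have simply spelled out the same chain (moving lemma, then reduction to ruled surfaces, then termwise application of Lemma \ref{lemm31} and collection of the $[V_j]$ by linearity). Your extra remark that each ruled surface genuinely arises from a map $g\colon C\to\Fa(X_d)$ after normalizing the base is a point the paper leaves implicit, and it is handled correctly.
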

\begin{lemm}\label{lemmplanere}
Under the hypothesis as above, all the 2-planes of $X_d$ are rationally equivalent.
\end{lemm}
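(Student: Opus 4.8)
The plan is to reduce the statement to the \emph{rational chain connectedness} of the Fano variety $\Fa_2(X_d)$ of $2$-planes, and then to establish the latter using the incidence variety $\II$ of (\ref{incidence}) together with the section statement of Lemma \ref{lineinplane}. For the reduction, observe that a rational curve $\varphi\colon \PP^1\to \Fa_2(X_d)$ pulls back the universal family of $2$-planes to a $\PP^2$-bundle $p\colon \mathcal{P}_{\PP^1}\to \PP^1$ equipped with a map $H\colon \mathcal{P}_{\PP^1}\to X_d$ that is an isomorphism on each fibre. Since $p$ is flat, the fibre class $[\mathcal{P}_t]=p^{*}[t]$ of a point $t\in \PP^1$ is independent of $t$ in $\CH_2(\mathcal{P}_{\PP^1})$, because $[t_0]=[t_1]$ in $\CH_0(\PP^1)$; pushing forward by $H$ shows that the two $2$-planes $\varphi(t_0)$ and $\varphi(t_1)$ are rationally equivalent in $X_d$. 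Hence any two $2$-planes lying in a common chain of rational curves in $\Fa_2(X_d)$ are rationally equivalent, and it suffices to prove that $\Fa_2(X_d)$ is rationally chain connected.

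To prove this, I would exploit the two projections of the incidence variety (\ref{incidence}). The projection $\pi_2\colon \II\to \Fa_2(X_d)$ is surjective, and its fibre over $[P]$ is the space of lines in the $2$-plane $P\cong\PP^2$, hence a $\PP^2$-bundle with rationally connected fibres; therefore it is enough to show that $\II$ itself is rationally chain connected. For this I would use the other projection $h=\pi_1\colon \II\to \Fa(X_d)$. The base $\Fa(X_d)$ is rationally connected: under the hypothesis (\ref{ineq}) the hypersurface $X_d$ is of higher Fano type, so its variety of lines is rationally connected by de Jong--Starr (cf.\ \cite{HF},\cite{DS}); in particular $\Fa(X_d)$ is rationally chain connected. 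The fibre $h^{-1}([L])$ is exactly the scheme of $2$-planes of $X_d$ containing the fixed line $L$, which, as recorded before Lemma \ref{lineinplane}, is a complete intersection cut out by the equations of type (\ref{typeeq}); since the degree sum (\ref{sumstar}) is $<n-1$, this complete intersection has positive dimension, hence is connected, and being a Fano complete intersection it is rationally chain connected.

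With these ingredients the assembly goes as follows. Given two $2$-planes $P_1,P_2$, choose lines $L_i\subseteq P_i$ and connect $[L_1]$ to $[L_2]$ by a chain of rational curves $C_1,\dots,C_m$ in $\Fa(X_d)$. Applying Lemma \ref{lineinplane} to each $C_j$ produces, via $\pr_2\circ s$, a rational curve in $\Fa_2(X_d)$ lying over $C_j$; choosing the section $s$ on $C_1$ so that it passes through $(L_1,P_1)\in\II$, and using the rational chain connectedness of the fibres $h^{-1}([L])$ to reconcile the different sections at the nodes of the chain (and to connect the final plane produced over $[L_2]$ to $P_2$ inside $h^{-1}([L_2])$), one glues all of these into a single chain of rational curves in $\Fa_2(X_d)$ joining $[P_1]$ to $[P_2]$. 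By the reduction of the first paragraph this proves $[P_1]=[P_2]$ in $\CH_2(X_d)$.

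The main obstacle is precisely this last gluing step: the section $s$ provided by Lemma \ref{lineinplane} is far from unique, and the planes produced over a common line at a node of the chain need not agree, so one must genuinely use the connectedness (indeed rational chain connectedness) of each fibre $h^{-1}([L])$ to patch the lifted curves together and to pin down the two boundary planes $P_1,P_2$. A secondary technical point is that the fibres $h^{-1}([L])$, while positive-dimensional complete intersections, need not be smooth, so one should invoke rational chain connectedness of possibly singular Fano complete intersections rather than the smooth statement of \cite{KMM}; alternatively one can restrict to the general line $L$ in each $C_j$, where the fibre is smooth, and treat the finitely many special lines separately.
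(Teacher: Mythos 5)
Your argument is correct and follows the same skeleton as the paper's proof: reduce to rational chain connectedness of $\Fa_2(X_d)$, and obtain that from the incidence variety $\II$ of (\ref{incidence}) using the fibration $h=\pi_1$ over $\Fa(X_d)$ together with the surjection $\pi_2$ onto $\Fa_2(X_d)$. Where you diverge is in how you pass from ``rationally connected base, rationally connected fibers'' to connectedness by rational chains of the total space $\II$. The paper first proves (for $X_d$ \emph{general}) that $\II=\Fl(\PP^1,\PP^2;X_d)$ is irreducible and smooth by a classical incidence count (left as an exercise), deduces rational connectedness of $\II$ from Graber--Harris--Starr, pushes forward to $\Fa_2(X_d)$, and only then uses a specialization argument to get rational chain connectedness of $\Fa_2(X_d)$ for \emph{every} smooth $X_d$. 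You instead lift a chain of rational curves in $\Fa(X_d)$ to $\II$ curve by curve via the Tsen--Lang sections of Lemma \ref{lineinplane}, and patch the lifts at the nodes using rational chain connectedness of the fibers $h^{-1}([L])$; you also make explicit (via flatness of the pulled-back $\PP^2$-bundle) the reduction from a rational curve in $\Fa_2(X_d)$ to rational equivalence of planes, which the paper leaves implicit. Your route buys two things: it bypasses the irreducibility/smoothness of $\II$ entirely, and it works directly with an arbitrary smooth $X_d$ rather than general-plus-specialization. The price is that you must know rational chain connectedness of \emph{all} fibers $h^{-1}([L])$, which are possibly singular zero loci of low-degree equations; as you note, this needs either the specialization of rational chain connectedness from the smooth complete intersection case or your workaround of treating the finitely many special lines separately, and you should also justify rational chain connectedness of $\Fa(X_d)$ itself for a possibly singular Fano scheme of lines (e.g.\ again by specialization from the general, smooth Fano case, rather than by citing \cite{HF}, whose statement is about lines through a general point). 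With those citations pinned down, your proof is complete and arguably cleaner than the one in the paper.
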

\begin{proof}
Let us consider the incidence $\II$ in (\ref{incidence}). By the calculation (\ref{sumstar}), the fibers of $h: \II\rightarrow \Fa(X_d)$ is zero locus of the equations of Fano type. We claim that these fibers are smooth complete intersections for $X_d$ general, therefore, the general fibers of $h$ are Fano varieties, hence, they are rationally connected for $X_d$ general. 

In fact, the incidence $\II$ is just the flag variety $\Fl(\PP^1,\PP^2;X_d)$ of $X_d$. Therefore, we can use the classical incidence method to prove $\II$ is irreducible, smooth and of expected dimension for $X_d$ general. Namely, consider the following incidence, it is clear that $\Fl(\PP^1,\PP^2;X_d)$ is the fiber of $\pr_2^{-1}([X_d])$.
\[\xymatrix{& \mathbb{I}=\{(\PP^1\subseteq\PP^2, X_d)~|~\PP^1\subseteq\PP^2 \subseteq X_d\} \ar[dl]_{\pr_1} \ar[dr]^{\pr_2}&\\
\Fl(\PP^1,\PP^2;\PP^n)& &\PP \Ho^0(\PP^n,\OO_{\PP^n}(d))}
\]
We leave an easy exercise to the reader to finish the proof of the claim.

Note that $\Fa(X_d)$ is rationally connected for $X_d$ general (in fact it is Fano, see \cite[Chapter V, Exercise 4.7]{K}) and the general fibers of $h:\II\rightarrow \Fa(X_d)$ are rationally connected. It follows from the Graber-Harris-Starr Theorem \cite{GHS} or the Tsen-Lang Theorem that $\II$ is rationally connected for $X_d$ general.  It is clear that the projection \[\pi_2: \II \rightarrow \Fa_2(X_d)\]is surjective. It implies that $\Fa_2(X_d)$ is rationally connected for $X_d$ general. By the specialization argument, we conclude that $\Fa_2(X_d)$ is rationally chain connected for every smooth hypersurface $X_d$. In particular, all the 2-planes of $X_d$ are rationally equivalent.
\end{proof}

\begin{propo}\label{killd}
With the hypothesis as above, we have \[d\cdot [\CH_2(X_d)]_{tor}=0.\]
\end{propo}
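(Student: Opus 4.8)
The plan is to take an arbitrary class $\alpha\in[\CH_2(X_d)]_{tor}$ and prove $d\alpha=0$ by transporting the relation of Corollary \ref{corre} into singular homology. First I would observe that the inequality (\ref{ineq}) with $d\geq 3$ forces $n$ to be large (indeed $n\geq 19$), so $\dim X_d=n-1$ is large and the Lefschetz hyperplane theorem gives $\Ho_4(X_d,\mathbb{Z})=\mathbb{Z}$, generated by the class of a linear $2$-plane $[P]$, and $\Ho_6(X_d,\mathbb{Z})=\mathbb{Z}$, generated by a linear $3$-plane $[\PP^3]$ with $H\cdot[\PP^3]=[P]$, where $H=c_1(\OO_{X_d}(1))$. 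In particular these homology groups are torsion free, so every torsion class $\alpha$ is homologically trivial.

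Next I would apply Corollary \ref{corre} to $\alpha$ to write $d\alpha=\sum_i\pm[P_i]+[V]\cdot c_1(\OO_{X_d}(d))$ for some $[V]\in\CH_3(X_d)$. By Lemma \ref{lemmplanere} all the $2$-planes $[P_i]$ are rationally equivalent to the fixed class $[P]$, so $\sum_i\pm[P_i]=a[P]$ for some integer $a$, while $[V]\cdot c_1(\OO_{X_d}(d))=d\,(H\cdot[V])$. This yields the relation
\[
d\alpha=a[P]+d\,(H\cdot[V])
\]
in $\CH_2(X_d)$.

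Now I would take cycle classes. Writing $\mathrm{cl}([V])=b\,\mathrm{cl}([\PP^3])$ in $\Ho_6(X_d,\mathbb{Z})=\mathbb{Z}$ gives $\mathrm{cl}(H\cdot[V])=b\,\mathrm{cl}([P])$; since $\mathrm{cl}(\alpha)=0$, the displayed relation forces $a+db=0$, i.e. $a=-db$. Substituting and using $H\cdot[\PP^3]=[P]$, we get
\[
d\alpha=d\,(H\cdot[V]-b[P])=d\,(H\cdot W),\qquad W:=[V]-b[\PP^3]\in\CH_3(X_d),
\]
where $W$ is homologically trivial by construction. Thus $d\alpha$ is exhibited as $d$ times the hyperplane section of a homologically trivial $3$-cycle.

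The hard part is then to show $d\,(H\cdot W)=0$ for such $W$, and this is where I expect the real difficulty to lie, since $H\cdot W$ is merely known to be homologically trivial and need not itself be torsion a priori. I would attack it using the geometry behind Corollary \ref{corre}: in the construction of Lemma \ref{lemm31} the relevant $3$-cycle $V=H_*([\mathcal{P}])$ is swept out by the family $\mathcal{P}$ of $2$-planes, so $H\cdot[V]$ is the class of a surface ruled by lines, and $H\cdot W$ is a homologically trivial class of this ruled type. The key claim to establish is that the subgroup of $\CH_2(X_d)$ generated by $2$-planes and by hyperplane sections of $3$-cycles of this form is torsion free (equivalently, that a homologically trivial line-ruled surface of this kind is annihilated by $d$); granting this, the torsion class $d\alpha$, which lies in that subgroup, must vanish. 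The cleanest alternative finish would be to know $\CH_3(X_d)=\mathbb{Z}$, for then $W$ homologically trivial would force $W=0$ and hence $d\alpha=H\cdot W=0$ outright; but absent that input, pinning down the residual class $H\cdot W$ through the $2$-plane-ruled structure of $V$ together with Lemma \ref{lemmplanere} is the step I expect to require the most care.
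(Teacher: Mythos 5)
Your reduction is correct as far as it goes and runs parallel to the paper's own bookkeeping: the identity $d\alpha=a[P]+d\,(H\cdot[V])$, the determination $a=-db$ from cycle classes, and the rewriting $d\alpha=d\,(H\cdot W)$ with $W=[V]-b[\PP^3]$ homologically trivial all correspond to the paper's relation $\sum_i a_i+cd=0$ (your $b$ is the paper's $c$). Two caveats: using $[\PP^3]$ as an element of $\CH_3(X_d)$ with $H\cdot[\PP^3]=[P]$ already requires that $X_d$ contain a linear $\PP^3$, which holds under (\ref{ineq}) by \cite[Theorem 1.6]{W} and is invoked explicitly in the paper; and, more seriously, the argument stops exactly where you say it does. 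Neither of your proposed ways to show $d\,(H\cdot W)=0$ --- a torsion-freeness claim for the subgroup of $\CH_2(X_d)$ generated by $2$-planes and line-ruled hyperplane sections, or the input $\CH_3(X_d)=\mathbb{Z}$ --- is established or available, so as written this is a genuine gap.

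The missing idea is that the term $[V]\cdot c_1(\OO_{X_d}(d))$ is not merely ``$d$ times the hyperplane section of some $3$-cycle'': by the self-intersection formula it equals $j^!(j_*[V])$, where $j:X_d\hookrightarrow\PP^n$ is the inclusion (with normal bundle $\OO_{X_d}(d)$) and $j^!:\CH_3(\PP^n)\to\CH_2(X_d)$ is the refined Gysin map. Since $\CH_3(\PP^n)=\mathbb{Z}$ maps isomorphically to $\Ho_6(\PP^n,\mathbb{Z})$, your homologically trivial $W$ satisfies $j_*W=0$ on the nose in $\CH_3(\PP^n)$, and therefore $d\,(H\cdot W)=c_1(\OO_{X_d}(d))\cap W=j^!(j_*W)=0$ with no further input; there is no residual class to control. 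This is precisely how the paper argues, in slightly different clothing: it writes the excess term as $V\cdot[X_d]$ with $V=c[\PP^3]\in\CH_3(\PP^n)$, chooses the representative $\PP^3$ to lie inside $X_d$ (Waldron), so that $[\PP^3]\cdot[X_d]=d[\PP^2]$ computed in $\CH_2(\PP^3\cap X_d)=\CH_2(\PP^3)$, and then Lemma \ref{lemmplanere} identifies that $[\PP^2]$ with the fixed plane class, giving $d\cdot a=c(d[\PP^2]-d[\PP^2])=0$. With this one observation your argument closes; without it, the key step is asserted rather than proved.
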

\begin{proof}
Let $a$ be an element of $[\CH_2(X_d)]_{tor}$. By Corollary \ref{corre}, we have
\begin{equation}\label{eq1}
d\cdot a=\sum \limits_i a_i [(\PP^2)_i]+V\cdot [X_d]
\end{equation}
where $a_i \in \mathbb{Z}$, $V\in \CH_3(\PP^n)$ and $\{(\PP^2)_i\}$ are 2-planes in $X_d$. Let $j$ be the inclusion $X_d\rightarrow \PP^n$. It is clear that $j_*(d\cdot a)$ is a torsion element in $\CH_2(\PP^n)=\mathbb{Z}$. In particular, we have\[0=j_*(d\cdot a)=(\sum\limits_i a_i) [\PP^2] +c [\PP^3]\cdot [X_d]=(\sum\limits_i a_i) [\PP^2] +c\cdot d\cdot [\PP^2] \] where $V=c[\PP^3] \in \CH_3(\PP^n)$  . Therefore, it follows that \[\sum\limits_i a_i+c\cdot d=0.\]
By Lemma \ref{lemmplanere}, the equality (\ref{eq1}) becomes
\begin{equation}\label{eq2}
d\cdot a=c([\PP^3]\cdot [X_d]-d\cdot [\PP^2])
\end{equation}
where $\PP^2$ is a 2-plane in $X_d$. It is clear that, under the hypothesis (\ref{ineq}), the hypersurface $X_d$ contains a projective space $\PP^3$ by \cite[Theorem 1.6]{W}. In particular, the equality (\ref{eq2}) is \[d\cdot a=c(d[\PP^2]-d[\PP^2])=0\] in $\CH_2(\PP^3\cap X_d)=\CH_2(\PP^3)$. We have proved the proposition.
\end{proof}

\section{The Homology of Fano Variety of Lines}
\begin{prop}\label{morse} (\cite[Page 153, Theorem]{Morse}, \cite{H4})
Let $X$ be a projective variety. Suppose that $Z$ is a subvariety of $X$ and $H$ is a linear hyperplane in the ambient projective space. If $X-(Z\cup H)$ is a local complete intersection, then the homomorphism
\[\pi_i((X-Z)\cap H)\rightarrow \pi_i(X-Z)\]
is an isomorphism for all $i<dim(X)-1$.
\end{prop}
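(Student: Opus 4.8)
This is, in substance, the Lefschetz hyperplane theorem for the quasi-projective variety $U:=X-Z$, whose hyperplane section is $(X-Z)\cap H=U\cap H$ and whose ``affine part'' is $V:=X-(Z\cup H)=U\setminus H$, the local complete intersection of the hypothesis. Writing $n=\dim X$, the plan is to show by stratified Morse theory that $U$ is built from $U\cap H$ by attaching cells of real dimension at least $n$, i.e. that the pair $(U,U\cap H)$ is $(n-1)$-connected. The long exact sequence of the pair then gives that $\pi_i(U\cap H)\to\pi_i(U)$ is an isomorphism for $i\le n-2$ (that is, for $i<n-1$) and a surjection for $i=n-1$, which is exactly the assertion.

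First I would fix a Whitney stratification of $X$ adapted to both $Z$ and $X\cap H$, so that $U$, $U\cap H$ and $V$ are unions of strata and stratified Morse theory applies. The whole point of the local complete intersection hypothesis is that it forces the rectified homotopical depth of $U$ along $H$ to equal $n$: by the theorem of Hamm and L\^e, a local complete intersection of dimension $n$ has maximal rectified homotopical depth, so the normal Morse datum of every stratum of $U$ meeting $H$ is $(n-1)$-connected. (When $Z$ is a divisor, $V$ is genuinely affine and one recovers the classical affine Lefschetz theorem of Andreotti--Frankel and Hamm, the distance function to a general point giving a Morse function of index at most $n$.)

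Next I would choose a stratified Morse function on $U$ adapted to $H$ --- built from a Lefschetz pencil of hyperplanes containing $H$, or from the reciprocal $1/|\ell|^2$ of a defining form $\ell$ of $H$ --- whose critical points all lie off $H$, i.e. in $V$. By the main theorem of stratified Morse theory, rebuilding $U$ from $U\cap H$ amounts to attaching the local Morse data at these critical points, and each such datum is the product of a tangential datum (controlled by the classical Andreotti--Frankel index estimate) with a normal datum (which is $(n-1)$-connected by the previous step). Together these force every attached cell to have real dimension at least $n$, whence $\pi_i(U,U\cap H)=0$ for $i\le n-1$ and the proposition follows.

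The main obstacle is precisely this last connectivity bookkeeping in the singular, stratified category: one must verify that the normal Morse data along every stratum has the required connectivity, which is exactly the content of the rectified-homotopical-depth computation for local complete intersections. As all of this is by now classical, in the text we simply invoke \cite[Page 153, Theorem]{Morse} and \cite{H4}.
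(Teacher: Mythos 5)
The paper offers no proof of this proposition at all---it is quoted verbatim from Goresky--MacPherson (Stratified Morse Theory, p.~153) and Hamm---and your sketch is a correct account of how those references establish it: the local complete intersection hypothesis on $X-(Z\cup H)$ supplies the rectified homotopical depth $n$ (Hamm--L\^e), the stratified Morse function off $H$ attaches cells of real dimension at least $n$, and the long exact sequence of the pair gives the isomorphism range $i<\dim(X)-1$. Since you ultimately defer to the same citations, this matches the paper's treatment.
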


\begin{propo}\label{prophomology}
Suppose that $X_d$ is a general hypersurface of degree $d$ in $\PP^n$ and \[d(d+1)\leq n-2.\] Then, we have \[\Ho_2(\Fa(X_d),\mathbb{Z})=\mathbb{Z}\] and it is generated by the class $[l]$ of lines in $\Fa(X_d)$ with respect to the Pl\"ucker embedding.
\end{propo}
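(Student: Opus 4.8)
The plan is to compute $\Ho_2(\Fa(X_d),\mathbb{Z})$ by exhibiting the Fano variety of lines as an iterated affine-bundle-like fibration over something whose homology we already understand, and then to use the Lefschetz-type result in Proposition \ref{morse} to transfer information across hyperplane sections. Concretely, I would first fix a point $p\in X_d$ and study the subvariety $\Fa_p\subseteq\Fa(X_d)$ of lines through $p$. As recalled in the excerpt (following \cite[Lemma 2.1]{Cubic}), $\Fa_p$ is cut out inside a $\PP^{n-1}$ by equations of type $(1,2,3,\ldots,d)$, so it is a smooth complete intersection of low multidegree once $X_d$ is general, and the numerical hypothesis $d(d+1)\le n-2$ is exactly what guarantees its dimension is large enough that Lefschetz applies in the range of degrees we care about. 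The key structural input is the evaluation/incidence fibration
\[
\xymatrix{\Fa_p \ar@{^{(}->}[r] & \mathcal{U}\ar[r]^{ev}\ar[d] & X_d\\ & \Fa(X_d) & }
\]
realizing the universal line as a $\PP^1$-bundle over $\Fa(X_d)$ with a second projection to $X_d$ whose fibers are the $\Fa_p$. I would use this to reduce the computation of low-degree homology of $\Fa(X_d)$ to that of $X_d$ (which is known by classical Lefschetz, being a hypersurface of high dimension) together with that of the fiber $\Fa_p$.

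The main technical engine is the homotopy fiber sequence attached to a fibration whose fibers are themselves complete intersections; this is where I expect to invoke the Morse-theoretic isomorphism of Proposition \ref{morse} repeatedly. The idea is that for a complete intersection $Y=\Fa_p$ of the given multidegree, the Lefschetz principle forces $\Ho_i(Y,\mathbb{Z})$ to agree with that of the ambient projective space for $i<\dim Y$, so in particular $\Ho_2(Y,\mathbb{Z})=\mathbb{Z}$ generated by a linear section, provided $\dim Y>2$, which the inequality $d(d+1)\le n-2$ secures. With $X_d$ simply connected and $\Ho_2(X_d,\mathbb{Z})=\mathbb{Z}$ (again Lefschetz, valid since $\dim X_d$ is large), one pushes the Leray/Serre spectral sequence of the $\PP^1$-bundle $\mathcal{U}\to\Fa(X_d)$ together with the fibration $\mathcal{U}\to X_d$ to pin down $\Ho_2(\Fa(X_d),\mathbb{Z})$.

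Then I would track the generator. Because the universal family identifies a line $[l]\in\Fa(X_d)$ with its Pl\"ucker class, and because the fibration maps the fundamental class of a fiber $\Fa_p$ to a multiple of the Pl\"ucker line class, the free rank-one group $\Ho_2$ must be generated precisely by $[l]$; the only thing to check is that the generator is \emph{primitive}, i.e.\ that no nontrivial multiple of $[l]$ bounds, and that no torsion survives. Torsion-freeness and the rank-one statement both follow from the collapse of the relevant spectral sequence in low degrees once all the fibers and bases are known to have torsion-free homology concentrated as in projective space. The hard part will be bookkeeping the fibration data carefully enough to conclude that the map on $\Ho_2$ induced by the fiber inclusion $\Fa_p\hookrightarrow\Fa(X_d)$ is an isomorphism (not merely surjective up to torsion), so that the Pl\"ucker class is exactly the generator rather than a proper multiple of it; this is the step where Proposition \ref{morse} must be applied to the complement $(X-Z)\cap H$ with the correct choice of $Z$ and $H$ so that the connectivity range covers degree two, and where the genericity of $X_d$ is essential to keep all intermediate loci smooth of expected dimension.
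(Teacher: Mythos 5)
Your plan hinges on treating the evaluation map $ev\colon\mathcal{U}\to X_d$ (with fibers $\Fa_p$) as a fibration to which a Serre spectral sequence or homotopy fiber sequence applies, and this is where the argument breaks. That map is a proper surjection whose fibers are complete intersections of type $(1,2,\ldots,d)$ only set-theoretically and only generically behave well: even for $X_d$ general, the fibers $\Fa_p$ over special points $p$ can be singular (and a priori of unexpected dimension), so $ev$ is not topologically locally trivial and $R^qev_*\mathbb{Z}$ is not the local system of fiber homology. Without that, neither the Leray computation of $\Ho_2(\mathcal{U})$ nor the claim that the fiber inclusion $\Fa_p\hookrightarrow\Fa(X_d)$ induces an isomorphism on $\Ho_2$ can be extracted. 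The paper sidesteps this entirely by working with a fibration that genuinely is one: it realizes $\Mor_1(\PP^1,X_d)$ as $\PP_{X_d}-D$, where $\PP_{X_d}\subseteq\PP^{2n+1}$ is shown (by a dimension count) to be a complete intersection of type $(d,\ldots,d)$ ($d+1$ factors) and $D$ is the degeneracy locus of dimension $n+1$; Proposition \ref{morse} is applied with $X=\PP_{X_d}$, $Z=D$, slicing by general hyperplanes to get $\pi_1(\PP_{X_d}-D)=1$ and $\pi_2(\PP_{X_d}-D)=\mathbb{Z}\langle[L]\rangle$ for a line $L\subseteq\PP_{X_d}-D$ (this is where $d(d+1)\le n-2$ enters, to guarantee such a line exists). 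Then the honest fiber bundle $\textbf{PGL}_2(\mathbb{C})\to\Mor_1(\PP^1,X_d)\to\Fa(X_d)$ gives the exact sequence $0\to\pi_2(\Mor_1)\to\pi_2(\Fa(X_d))\to\pi_1(\textbf{PGL}_2(\mathbb{C}))=\mathbb{Z}/2\mathbb{Z}\to 0$. Your proposal never identifies a fibration of this quality, so the ``main technical engine'' you describe has nothing to run on.

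A second, independent gap is your treatment of torsion. You assert that torsion-freeness ``follows from the collapse of the relevant spectral sequence,'' but in the correct computation there is a genuine potential $\mathbb{Z}/2\mathbb{Z}$ summand in $\Ho_2(\Fa(X_d),\mathbb{Z})$ coming from $\pi_1(\textbf{PGL}_2(\mathbb{C}))$, and no formal collapse kills it. The paper excludes it by a geometric parity argument: if $\Ho_2(\Fa(X_d))$ were $\mathbb{Z}\oplus\mathbb{Z}/2\mathbb{Z}$, the class $[L]$ would map isomorphically to $\Ho_2(\PP^N,\mathbb{Z})$ under the Pl\"ucker embedding, producing a family of lines over $L$ whose total space $\PP^1\times\PP^1$ is homologous in $X_d$ to a $2$-plane; this is impossible because every line bundle on $\PP^1\times\PP^1$ has even self-intersection while a $2$-plane meets two general hyperplanes in one point. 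Some argument of this kind (or an equally concrete substitute) is unavoidable, and your proposal contains no mechanism for it. The part of your sketch that does survive is the identification of the generator with the Pl\"ucker line class --- a pencil of lines through a point sweeping out a plane is indeed a line in $\Fa(X_d)$ --- but that is the easy half of the statement.
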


\begin{proof}
Let $\PP$ be $\PP\Ho om_{\C}(V^*,\Ho^0(\PP^1,\OO_{\PP^1}(1)))$ where $X_d\subseteq \PP(V)=\PP^n$ is defined by a homogeneous polynomial $F$ of degree $d$. So $\PP$ parameterizes $(n+1)-$tuples \[[u_0,u_1,\ldots,u_n]\] of homogeneous polynomials of degree $e$ on $\PP^1$. Let $D\subseteq \PP$ be the closed subvariety parametrizing the tuples $[u_0,\ldots,u_n]$ that have a common zero in $\PP^1$. Suppose that $e=1$. The subvariety $D$ parameterizes the tuples $[u_0,\ldots,u_n]$ that $\Span(u_0,\ldots,u_n)$ in $\Ho^0(\PP^1,\OO_{\PP^1}(1))$ is one dimensional, i.e., every pair $(u_i,u_j)$ satisfies a scalar linear relation. In particular, we have
\begin{equation}\label{dimeq}
 \dim(D)=n+1 \text{~and~} \dim(\PP)=2n+1.
\end{equation}

Let $\PP_{X_d}$ be the closed subset of $\PP$ parameterizing $[u_0,\ldots,u_n]$ such that \[F(u_0,\ldots,u_n) \equiv 0.\] Then the subvariety $\PP_{X_d}$ is defined by $d+1$ homogeneous polynomials of degree $d$ in $\PP$, i.e., it is of type as follows
\begin{equation}\label{citype}
\underbrace{(d,\ldots,d)}_{d+1}.
\end{equation}
On the other hand, it is clear that the space $\Mor_1(\PP^1,X_d)$ parametrizing the morphisms whose images are lines is $\PP_{X_d}-D$. In particular, the expected dimension of $\Mor_1(\PP^1,X_d)$ is\[\dim(\PP)-(d+1)=2n+1-(d+1)=2n-d.\] The space $\Mor_1(\PP^1,X_d)$ has the expected dimension if and only if $\PP_{X_d}$ is a complete intersection. It is clear that there is topological fiberation as follows.
\[\xymatrix{ \textbf{PGL}_2(\mathbb{C}) \ar[r] & \Mor_1(\PP^1,X_d)\ar[d] \\
&\Fa(X_d)=\Mor_1(\PP^1,X_d)/\textbf{PGL}_2(\mathbb{C})}\] In particular, it follows from \cite[Chapter V, 4.3.2]{K} that \[\dim(\Mor_1(\PP^1,X_d))=\dim(\Fa(X))+\dim \textbf{PGL}_2(\mathbb{C})=2n-d-3+3=2n-d.\]Therefore, the subscheme $\PP_{X_{d}}$ of $\PP$ is a complete intersection in $\PP$ of type (\ref{citype}).

Notice that we have (\ref{dimeq}). By using general $n+2$ hyperplanes in $\PP$ to intersect with $\PP_{X_d}-D$, we conclude that $\PP_{X_d}-D$ contains a line $L\subseteq \PP$ if $d(d+1)\leq n-2$, see \cite[Ex.4.10.5]{K}.
By Proposition \ref{morse} and the homotopy fiber sequence, we have a short exact sequence
\[\xymatrix{0 \ar[r] & \pi_2(\Mor_1(\PP^1,X_d))\ar[r] &\pi_2(\Fa(X_d))\ar[r] &\pi_1(\textbf{PGL}_2(\mathbb{C}))=\mathbb{Z}/2\mathbb{Z} \ar[r] & 0 } \]
where we use the fact $\pi_2(\textbf{PGL}_2(\mathbb{C}))=0$ and $$ \pi_1(\Mor_1(\PP^1,X_d))=\pi_1(\PP_{X_d}-D)=\pi_1(\PP_{X_d})=\{1\}.$$ Note that $\Fa(X_d)$ is simply connected. By the Hurewicz theorem, the above exact short sequence is
\[\xymatrix{0\ar[r] & \Ho_2(\PP_{X_d}-D,\mathbb{Z})=\mathbb{Z}<[L]>\ar[r] & \Ho_2(\Fa(X_d),\mathbb{Z})\ar[r] & \mathbb{Z}/2\mathbb{Z} \ar[r] &0}\]
where $[L]$ is the homology class of a line $L$ in $\PP_{X_d}-D$. From this exact sequence, it is clear that $\Ho_2(\Fa(X_d))=\mathbb{Z}$ or $\mathbb{Z}\oplus \mathbb{Z}/2\mathbb{Z}$. We exclude the second case. In fact, if $\Ho_2(\Fa(X_d))=\mathbb{Z}\oplus \mathbb{Z}/2\mathbb{Z}$, then the map $\varphi$
\[\xymatrix{\mathbb{Z}<[L]>=\Ho_2(\Mor_1(\PP^1, X_d),\mathbb{Z})\ar@/^1.5pc/[rr]^{\varphi} \ar[r] &\Ho_2(\Fa(X_2)) \ar[r]_{i_*} & \Ho_2(\PP^N,\mathbb{Z})=\mathbb{Z}}\]
is an isomorphism where $i:\Fa(X_d)\rightarrow \PP^N$ is induced by the Pl\"ucker embedding. Therefore, we have a family \[f:L\times \PP^1 \rightarrow X_d\]of lines parametrized by $L$ such that the image $f_*([L\times \PP^1])=f_*([\PP^1\times \PP^1])$ of $f$ is homologous to a 2-plane in $X_d$. It is absurd since the self-intersection of any line bundle on $\PP^1\times \PP^1$ is even and the intersection number of two general hyperplanes with a 2-plane is one. We have proved the proposition.

\end{proof}
\section{Dual Graphs and Specializations}
We recall some facts about the Kontsevich space $\M_{0,m}(X,b)$ which parametrizes stable maps of genus zero and of degree $b$ into $X$. For the details, we refer to \cite{KM} and \cite{S1}.
\subsection{Notations}
We recall the combinatorial data for a stable map, namely, its dual graph. All the graphs in our paper are finite trees, i.e., they have finitely many vertices and do not have any loops. A graph consists of the following data:
\begin{enumerate}
\item there is a non-negative integer $d_v$ for each vertex $v$, which we call the degree of the vertex, and
\item there is a list $\mathcal{L}=\{p_1,\ldots, p_k\}$ of vertices which we call the marked points. The points in the list may not be distinct, the number of the point $p$ in this list is called the multiplicity of the point.
\end{enumerate}
Let us recall how to associate to a stable map its dual graph. For a stable map of genus zero \[f:C\rightarrow X,\]the dual graph $G(f)$ of $f$ is a graph with data as follows:
\begin{itemize}
\item the vertices in the graph $G(f)$ is one-to-one corresponding to the irreducible components  of $C$, e.g. the vertex $v\in v(G)$ corresponds to the irreducible component $C_v$ of $C$,
\item an edge between the vertices for every intersection point between two componenets of $C$,
\item there is a non-negative integer $d_v$ which is equal to the degree of $f_*([C_v])$ (it may be zero if the map $f$ collapses the component),
\item a marked point on the component $C_i\cong \PP^1$ contributes a point to the list $\mathcal{L}$. 
\end{itemize}
We call a graph is a good tree if all the numbers $\{d_v\}$ are one. The stable map $f$ is a good tree if its dual graph $G(f)$ is a good tree, i.e., the map $f$ does not contract any component and its image is the union of lines. 

\begin{definition}
Let $G$ be a graph as above. A subgraph $W$ of $G$ is a graph satisfying:
\begin{itemize}
\item the vertices $v(W)\subseteq v(G)$,
\item the edges $e(W)\subseteq e(G)$,
\item the marked vertices $\mathcal{L}(W)\subseteq \mathcal{L}(G)$ counting with the multiplicity,
\item the degree $d_v$ of $v\in W$ is equal to the degree with respect to $G$.
\end{itemize}
\end{definition} 
\subsection{Deformation and Specialization}
\begin{definition} Let $G$ be a dual graph and $H$ be a non-empty subgraph. We call $G_1$ a contraction of $G$ along $H$ if
\begin{itemize} 
\item the set of the vertices of $G_1$ is the disjoint union of the vetices that are not in H and a single point $\{v_H\}$, i.e., $v(G_1)=(v(G)-v(H)) \amalg \{v_H\}$,
\item the edges $e(G_1)$ are those edges of $G_1$ connecting two vertices that are not in $H$, together with edges from $v\in S\subseteq v(G)-v(H)$ to $v_H$ where the subset $S\subseteq v(G)-v(H)$ consists of the vertices in $G$ connecting with a vertex in $H$,
\item the marked vertices $\mathcal{L}(G_1)$ consist of the marked vertices in $v(G)-v(H)$ and the vertex $v_H$ with multiplicity $|\mathcal{L}(H)|$,
\item the degree $d_{v_H}$ of $v_H$ is $\sum\limits_{v\in v(H)} d_v$ and the degrees of other vertices are their degrees in $G$.
\end{itemize}
The degree $\deg(G)$ of the graph $G$ is $\sum\limits_{v\in v(G)} d_v$. In this case, we call $G_1$ a deformation of $G$ and $G$ a specialization of $G_1$ (without mentioning $H$).\\ Suppose that $G$ is a good tree. If $G_1$ is obtained by contracting a subgraph $K$ in $G$ and the total degree of $K$ is two, then we call $G_1$ is a conic deformation of $G$. Two good trees $G_2$ and $G_3$ are said to be connected by a conic deformation if there is some dual graph $K$ that is a conic deformation of $G_2$ and $G_3$. We would like to provide a typical example to illustrate it. 

\begin{example}\label{ex}
Consider the following configurations (the underlying chain of degree one rational curve is the same and the
unique marked point lies on the $i$-th component in $\Gamma_i$):
\[\Gamma_1:\underbrace{\M_{0,2}(X,1)\times_{X}\M_{0,2}(X,1)\times_X\ldots \times_X \M_{0,1}(X,1)}_{b~factors}\]
\[\Gamma_2:\underbrace{\M_{0,1}(X,1)\times_{X}\M_{0,3}(X,1)\times_X\ldots \times_X \M_{0,1}(X,1)}_{b~factors}\]
\[\ldots \ldots\]
\[\ldots \ldots\]
\[\Gamma_b:\underbrace{\M_{0,1}(X,1)\times_{X}\M_{0,2}(X,1)\times_X\ldots \times_X \M_{0,2}(X,1)}_{b~factors}\]
where the marked point of the configuration $\Gamma_i$ is on the $i$-th component of the domain of the stable map. We can use a conic deformation to connect $\Gamma_i$ and $\Gamma_{i+1}$. We only give this deformation for $i=1$ (the general case is similar). In fact, the conic deformation connecting $\Gamma_1$ and $\Gamma_2$ is given by the dual graph of the following configuration:
\begin{equation} \label{conf}
\underbrace{\M_{0,2}(X,2)\times_{X}\M_{0,2}(X,1)\times_X\ldots \times_X \M_{0,1}(X,1)}_{b-1~factors}.
\end{equation}

\end{example}

\end{definition}
Keeping the notations as before, we have a space $\M_{0,m}(X,G)$ which parametrizes the stable maps whose dual graphs are $G$ or specializations of $G$. We have evaluation maps
\begin{equation}\label{evaluation}
ev:\M_{0,1}(X,b)\rightarrow X~\text{and}~ev_{G}:\M_{0,1}(X,G)\rightarrow X.
\end{equation}

\begin{lemm} \label{conicdeform}
Let $G_1$ and $G_2$ be two dual graphs of good trees of total degree $e$, i.e., each graph consists of $e$ vertices such that each vertex is of degree $1$. Assume that $G_1$ and $G_2$ have $k$ marked points. Then we can connect $G_1$ to $G_2$ by a finite series of conic deformations.
\end{lemm}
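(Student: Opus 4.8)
The plan is to exploit that ``connected by a finite series of conic deformations'' is an equivalence relation on the set of good trees of total degree $e$ carrying $k$ marked points: it is reflexive (the empty series), symmetric because a single conic deformation is symmetric in its two members by definition, and transitive by concatenation of series. Hence it suffices to fix one normal form $N_e$ and to connect every such good tree to $N_e$. I would take $N_e$ to be the chain $v_1-v_2-\cdots-v_e$ (each $v_i$ of degree one, with an edge joining $v_i$ to $v_{i+1}$) carrying all $k$ marked points on the single vertex $v_1$; then both $G_1$ and $G_2$ connect to $N_e$, hence to each other.

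The first thing I would record is the effect of a single step. If $u,w$ are adjacent vertices of a good tree $G$ and $H=\{u,w\}$ is the subgraph spanned by the edge $uw$, then contracting $H$ produces a graph $K$ with a degree-two vertex $v_H$ whose incident edges are exactly those edges of $G$ joining $\{u,w\}$ to the outside, and whose marked points are exactly those formerly lying on $u$ or $w$ (this is the definition of contraction). A good tree $G'$ admitting $K$ as a conic deformation is then obtained by re-splitting $v_H$ into two adjacent degree-one vertices and distributing the external edges and the marked points at $v_H$ arbitrarily between them. Since $G$ was a tree, removing the two vertices $u,w$ leaves a forest whose components are attached through precisely these external edges, so \emph{every} such distribution again yields a tree and is therefore legitimate. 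Thus one conic deformation realizes exactly the local move: replace two adjacent lines by two adjacent lines and redistribute, as one pleases, the subtrees and the marked points that hung on them. In particular, Example~\ref{ex} is the special case that slides one marked point from a component to an adjacent component.

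With this move in hand I would argue by induction on $e$ that every good tree is connected to $N_e$. For $e=1$ there is nothing to prove. For $e\geq 2$, fix a leaf $\ell$ attached to its unique neighbour $v$. First, applying the marked-point slide of Example~\ref{ex} repeatedly, I would move every marked point on $\ell$ onto $v$, so that $\ell$ becomes marked-point-free. Deleting $\ell$ gives a good tree $G-\ell$ on $e-1$ vertices carrying all $k$ marked points, which by the induction hypothesis is connected to $N_{e-1}$ by some series of conic deformations. I would then lift this series to $G$ one step at a time: each step acts on an edge $\{a,b\}$ of the current graph, and I perform the same redistribution on $G$, carrying the marked-point-free leaf $\ell$ along with whichever of $a,b$ it is currently attached to and leaving it untouched otherwise. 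The lift is again a series of conic deformations, and it brings $G$ to the chain $v_1-\cdots-v_{e-1}$ of $N_{e-1}$ with $\ell$ dangling from some vertex.

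Finally I would slide the dangling leaf to the terminal vertex: if $\ell$ already hangs on an interior vertex $v_i$, one conic deformation on $\{v_i,v_{i+1}\}$, re-split so that $\ell$ passes to the copy of $v_{i+1}$ together with the onward edge of the chain, moves $\ell$ one step toward the end, and iterating places $\ell$ as the pendant endpoint, producing $N_e$ with all marked points on $v_1$. This completes the induction and the lemma. The step I expect to be the genuine obstacle is the lift of the inductive series from $G-\ell$ to $G$: one must check that carrying the extra leaf along at every step keeps each move a bona fide conic deformation and never forces a disconnected or cyclic intermediate graph. This is precisely where the freedom in the local move—arbitrary redistribution of the external edges at the contracted pair, guaranteed by the definition of contraction—is used, so the verification is combinatorial rather than geometric.
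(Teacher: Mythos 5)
Your proposal is correct, but it takes a genuinely different route from the paper's proof of Lemma~\ref{conicdeform}. Both arguments rest on the same local move---contract an adjacent pair of degree-one vertices to a degree-two vertex and re-split it, redistributing the external subtrees and the marked points arbitrarily (legitimately, since any redistribution of the attached forest across the new edge again gives a tree)---and both reduce the problem to connecting every good tree to one fixed normal form, using that the relation is an equivalence relation. The difference is the normal form and the termination mechanism. You take the chain with all marked points at one end and argue by induction on the number of vertices $e$: strip a marked-point-free leaf, deform the smaller tree by the inductive hypothesis, and lift that series back to $G$ carrying the leaf along. The paper takes the star $G_0$ (a central vertex $v_0$ adjacent to every other vertex and carrying all marked points) and runs a greedy monovariant argument: choose a vertex $v'$ of maximal degree, and whenever some vertex $u$ is at distance two from $v'$ through a common neighbour $w$, one conic deformation on $\{v',w\}$ strictly increases the number of edges at $v'$; a similar move increases the multiplicity of the marked points at $v'$; iterating terminates at $G_0$. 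The paper's version is shorter because it never has to re-validate an entire series of moves after modifying the ambient tree, whereas your induction pays for its cleaner structure with the lifting verification you flag at the end---which does go through, precisely because the re-splitting after contraction is unconstrained, so the extra pendant leaf can always be assigned to either side of the re-split edge without creating a cycle or disconnecting the graph.
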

See Example \ref{ex}. We can connect $\Gamma_i$ and $\Gamma_j$ by finitely many configurations similar as \ref{conf}).
\begin{proof}
Let $G_0$ be the dual graph such that $|v(G_0)|=e$ and $d_v=1$ for each $v\in v(G_0)$. Suppose that $G_0$ has a central vertex $v_0$ to which every other vertex is connected and all the marked points are $v_0$. To prove the lemma, it suffices to show that all the good trees can be deformed to $G_0$ by series of conic deformations. Suppose that $G$ is a good tree and $v'$ is a vertex of $G$ with the most edges. If $v'$ is not connected to every vertex, then there is some vertex $w$ connected to both $v'$ and other vertex $u$. 

We contract the subgraph $K(\subseteq G)$ of degree two which consists of $\{w,v'\}$. This new graph $G'$ is a contraction of a graph $G''$ where $G''$ is the union of $G'$ and one point $v_1$ such that $v_1$ connects to $v_K\in v(G')$. In particular, we get a conic deformation connecting $G''$ and $G$. From $G$ to $G''$, we increase the number of the edges connected to $v'$ since $u$ is connected to $v'$ in $G''$. Similarly, for any marked point which is not $v'$, there is a conic deformation that increases the number\[\#(\{p\in \mathcal{L}~|p=v'\}), \text{ see 5.1 (2).}\]
Repeat this process. We can connect $G$ to $G_0$ by series of conic deformations. We have proved the lemma.
\end{proof}

\begin{lemm}\label{starlemm}
Let $G$ be a dual graph with $d_v=0$ or $1$ for all $v\in v(G)$. Then $G$ is a specialization of a good tree $G'$. Moreover, we have
\[\deg(G)=\sum\limits_{v\in G}d_v=\sum\limits_{v'\in G'} d_{v'}=\deg(G').\]
\end{lemm}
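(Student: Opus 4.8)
The plan is to argue by induction on the number of degree-zero vertices of $G$, peeling them off one at a time. Since a dual graph is by definition a finite connected tree, I will repeatedly contract a degree-zero vertex together with an adjacent degree-one vertex; each such contraction merges a connected subtree and therefore again produces a tree, lowers the count of degree-zero vertices by one, and---because $d_{v_H}=\sum_{v\in v(H)}d_v$---leaves the total degree unchanged. I read the conclusion ``$G$ is a specialization of $G'$'' in the sense used throughout the paper, i.e. that $G'$ is obtained from $G$ by a finite series of contractions (the relation being the transitive closure of a single contraction step, exactly as with the ``finite series of conic deformations'' in Lemma \ref{conicdeform}). I may also assume $\deg(G)\geq 1$, since if $\deg(G)=0$ then every vertex has degree zero while no good tree has total degree zero, so that case is vacuous.

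For the base case, if $G$ has no degree-zero vertex then every $d_v=1$, so $G$ is already a good tree and I take $G'=G$. For the inductive step, assume $G$ has at least one degree-zero vertex. The key point is to locate a degree-zero vertex $w$ adjacent to a degree-one vertex $v$. Writing $Z$ for the set of degree-zero vertices and $U$ for its complement, we have $U\neq\emptyset$ (because $\deg(G)\geq 1$) and $Z\neq\emptyset$ (by assumption); since the tree $G$ is connected it cannot be split between $Z$ and $U$ without an edge joining them, and that edge supplies the desired pair $(w,v)$.

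I then set $H=\{v,w\}$ together with the edge between them, a connected subtree of total degree $d_v+d_w=1$, and let $\tilde G$ be the contraction of $G$ along $H$. By the definition of contraction the new vertex $v_H$ has degree $1$, all other vertices retain their degrees, the result is again a tree with every $d_v\in\{0,1\}$, and $\deg(\tilde G)=\deg(G)$; moreover $\tilde G$ has one fewer degree-zero vertex than $G$, and $G$ is by construction a specialization of $\tilde G$. Applying the inductive hypothesis to $\tilde G$ yields a good tree $G'$ with $\tilde G$ a specialization of $G'$ and $\deg(G')=\deg(\tilde G)$; chaining the two specializations and the two degree equalities gives that $G$ is a specialization of $G'$ with $\deg(G')=\deg(G)$, as required.

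The step I expect to require the most care is precisely the chaining: a single contraction does not suffice in general, as already the path $w_1\!-\!v_1\!-\!v_2\!-\!w_2$ (with $w_1,w_2$ of degree zero) shows, since any connected subtree containing both $w_1$ and $w_2$ also contains $v_1,v_2$ and so has total degree two. Thus I must make sure the paper's notion of specialization is the transitive one generated by successive contractions, and that each individual contraction is performed along a connected subtree so that the tree structure (no loops) is preserved at every stage; granting these, the remainder is routine bookkeeping on the vertex degrees.
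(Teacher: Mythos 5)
Your argument is correct and essentially the paper's: the paper likewise removes degree-zero vertices one at a time by contracting each such vertex together with an adjacent vertex, noting that the total degree is preserved, and then iterating. Your additional care in choosing the adjacent vertex to have degree one, your explicit reading of ``specialization'' as the transitive closure of single contractions, and your handling of the vacuous case $\deg(G)=0$ are harmless refinements of the same idea.
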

\begin{proof}
For a vertex $v\in v(G)$ with $d_v=0$, we can pick up some vertex $w$ which is connected to $v$. Then we contract the subgraph that consists of $\{v,w\}$. Repeat this process, we have proved the lemma.
\end{proof}

\subsection{The connectedness of evaluation fibers for the unions of lines and conics}

In the following, we assume that $\Ho^2(X,\mathbb{Z})=\mathbb{Z}$ and the evalution fibers of the evalution maps
\begin{equation} \label{hypothesis}
\M_{0,1}(X,1)\rightarrow X~\text{and}~\M_{0,1}(X,2)\rightarrow X
\end{equation}
are connected and nonempty. We will verify the assumptions for $X=\Fa(X_d)$ in section 7, see Lemma \ref{line} and Lemma \ref{conic}.
It is obvious that we have the following lemma.
\begin{lemm}\label{connected}
Let $Z$ and $Y$ be Deligne-Mumford stacks over $\mathbb{C}$. The fiber product $Z\times_S Y$ of two proper morphisms \[Z\rightarrow S~and~g:Y\rightarrow S\] over $S$ is connected if $Z$ is connected and the fibers of $g$ are connected where $S$ is a $\mathbb{C}$-scheme.
\end{lemm}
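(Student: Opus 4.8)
The plan is to reduce the statement to the first projection $p\colon Z\times_S Y\to Z$ and then invoke an elementary topological fact about proper maps with connected fibres. Since $p$ is the base change of $g\colon Y\to S$ along the structure morphism $Z\to S$, and properness is stable under base change, $p$ is proper. Moreover, for a point $z\in Z$ lying over $s\in S$, the fibre $p^{-1}(z)$ is canonically identified with $g^{-1}(s)$, which is connected and nonempty by hypothesis. Hence $p$ is a proper surjection all of whose fibres are connected, and it suffices to prove that such a morphism has connected source whenever the target $Z$ is connected.

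To prove this, I would pass to the underlying topological spaces: writing $|\cdot|$ for the topological realization of a Deligne--Mumford stack, connectedness of a stack is equivalent to connectedness of its realization, $p$ proper gives that $|p|\colon |Z\times_S Y|\to |Z|$ is a closed continuous map, and the set-theoretic fibres $|p|^{-1}(z)=|g^{-1}(s)|$ remain connected and nonempty, so $|p|$ is surjective. Now suppose for contradiction that $|Z\times_S Y|=W_1\amalg W_2$ is a decomposition into two nonempty closed-and-open subsets. Because $|p|$ is closed, the images $|p|(W_1)$ and $|p|(W_2)$ are closed in $|Z|$; because each fibre of $|p|$ is connected, every fibre lies entirely in $W_1$ or entirely in $W_2$, so these two images are disjoint; and because $|p|$ is surjective, their union is all of $|Z|$. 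Thus $|Z|=|p|(W_1)\amalg |p|(W_2)$ is a partition into two nonempty closed subsets, contradicting the connectedness of $Z$. Applying this with $W=Z\times_S Y$ yields the lemma.

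The only delicate point, and the main (minor) obstacle, is guaranteeing that $p$ is genuinely surjective, i.e. that $g^{-1}(s)\neq\emptyset$ for every $s$ in the image of $Z\to S$; this is exactly the nonemptiness clause built into the hypothesis (\ref{hypothesis}) on evaluation fibres, under which the lemma will be applied in Section~7. The remaining care is purely bookkeeping in the stacky setting: one works throughout with the realizations $|Z|$, $|Y|$, and $|Z\times_S Y|$ so as to avoid any claim that coarse spaces commute with fibre products, using only that properness of $p$ forces $|p|$ to be closed and that the fibres over points are computed as claimed. With these observations the contradiction argument above goes through verbatim, establishing that $Z\times_S Y$ is connected.
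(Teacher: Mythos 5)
Your proof is correct, but note that the paper itself offers no argument at all: the lemma is introduced with ``It is obvious that we have the following lemma,'' so the only comparison available is with the standard argument the author presumably had in mind, which is exactly the one you give. Your reduction to the first projection $p\colon Z\times_S Y\to Z$ (proper by base change, hence closed on realizations, with fibres identified with the fibres of $g$) followed by the elementary fact that a closed surjection with connected fibres onto a connected space has connected source is the right proof, and your insistence on working with topological realizations rather than coarse spaces is the correct way to handle the stacky bookkeeping. Two remarks. First, you are right to single out nonemptiness as the genuinely load-bearing hypothesis: as literally stated the lemma is false if empty fibres are allowed (take $Z=S=\mathbb{A}^1$ and $Y$ the disjoint union of two distinct closed points of $S$; every fibre of $g$ is a point or empty, $Z$ is connected, yet $Z\times_S Y$ is two points), and the paper silently relies on the nonemptiness clause built into hypothesis (\ref{hypothesis}), exactly as you observe. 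Second, a small gap you wave at but do not close: the fibre of $p$ over a non-closed point $z\in |Z|$ lying over $s$ is $g^{-1}(s)\times_{k(s)}\Spec k(z)$, and connectedness is not in general preserved under extension of the base field, so ``every fibre of $|p|$ is connected'' is not immediate from the hypothesis on the fibres of $g$. This is easily repaired: the images $|p|(W_1)$ and $|p|(W_2)$ are closed, so if they were not disjoint they would meet in a nonempty closed set, which (everything being locally of finite type over $\mathbb{C}$) contains a closed point, and over a closed point the fibre of $p$ is literally a fibre of $g$, giving the contradiction. With that adjustment your argument is complete and supplies the proof the paper omits.
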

Recall the evaluation map $ev_{G}:\M_{0,1}(X,G)\rightarrow X$ (\ref{evaluation}) for a graph $G$.
\begin{lemm} \label{lemmone}
Let G be a dual graph with one marked point and $d_v=1$ or $2$ for all $v\in v(G)$. Then the fibers of $ev_{G}$ are connected. Suppose that $G$ has two conic specializations $K_1$ and $K_2$, then \[\deg(G)=\deg(K_1)=\deg(K_2)\] and $ev^{-1}_{K_i}(p)\subseteq ev^{-1}_G(p)$ (in the fiber $ev^{-1}(p)$) for $p\in X$.
\end{lemm}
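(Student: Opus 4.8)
The plan is to reduce the whole statement to repeated applications of Lemma \ref{connected} together with the hypothesis (\ref{hypothesis}). The geometric input is that $\M_{0,1}(X,G)$ is assembled from the component moduli spaces $\M_{0,\ast}(X,d_v)$ by forming fiber products over $X$ along the evaluation maps at the nodes recorded by the edges of $G$; since $G$ is a tree, these fiber products can be built one edge at a time. Fixing the image $p$ of the marked point only constrains the component carrying that point, so $ev_G^{-1}(p)$ is itself an iterated fiber product of the same shape, with the root factor replaced by its fiber over $p$. All the maps in sight (evaluation maps and forgetful maps of Kontsevich spaces, and closed immersions of fibers) are proper over the relevant base, so Lemma \ref{connected} is available at each step.

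First I would record an auxiliary statement: for $d\in\{1,2\}$ and any number of marked points, the evaluation map $\M_{0,1+s}(X,d)\to X$ at a chosen marked point has connected fibers. Indeed, the forgetful morphism $\phi:\M_{0,1+s}(X,d)\to\M_{0,1}(X,d)$ keeping the chosen point and stabilizing is proper with connected fibers, since its fiber over a stable map just adds $s$ points to a connected genus-zero domain. The chosen evaluation factors as $ev\circ\phi$, so its fiber over $p$ equals $ev^{-1}(p)\times_{\M_{0,1}(X,d)}\M_{0,1+s}(X,d)$; by (\ref{hypothesis}) the base fiber $ev^{-1}(p)\subseteq\M_{0,1}(X,d)$ is connected, and Lemma \ref{connected} then yields connectedness.

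Next comes the main induction on the number of vertices of $G$, carried out for dual graphs with possibly several marked points but with the evaluation taken at one distinguished marked point $q$. The base case is a single vertex, which is exactly the auxiliary statement. For the inductive step I choose a leaf $w$ distinct from the vertex carrying $q$ (a tree with at least two vertices has at least two leaves), let $u$ be its neighbor, and form $G'$ by deleting $w$ and placing a new marked point $r$ on $u$ at the former node; note $G'$ still has all $d_v\in\{1,2\}$ and one fewer vertex. Then
\[ev_{G,q}^{-1}(p)\;=\;ev_{G',q}^{-1}(p)\;\times_{X}\;\M_{0,1+s}(X,d_w),\]
the fiber product being taken along evaluation at $r$ on the left and at the node on the right. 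By induction $ev_{G',q}^{-1}(p)$ is connected, and by the auxiliary statement $\M_{0,1+s}(X,d_w)\to X$ has connected fibers, so Lemma \ref{connected} gives connectedness of $ev_{G,q}^{-1}(p)$. Taking $q$ to be the unique marked point of the original $G$ proves that the fibers of $ev_G$ are connected.

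Finally, the equalities $\deg(G)=\deg(K_1)=\deg(K_2)$ are formal: a conic deformation assigns the contracted vertex the sum of the contracted degrees, so the total degree $\sum_v d_v$ is preserved under specialization (compare Lemma \ref{starlemm}), and each $K_i$ is obtained from $G$ by such moves. The inclusion $ev_{K_i}^{-1}(p)\subseteq ev_G^{-1}(p)$ is then immediate from the construction of these spaces: since $K_i$ is a specialization of $G$, every stable map whose dual graph is $K_i$ or a specialization of $K_i$ is a fortiori a map with a specialization of $G$, whence $\M_{0,1}(X,K_i)\subseteq\M_{0,1}(X,G)$ inside $\M_{0,1}(X,e)$ with $e=\deg(G)$, and restricting to the fiber over $p$ gives the containment. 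The one genuinely delicate point is the auxiliary statement — in particular the connectedness of the fibers of $\phi$ over reducible degree-two domains — together with the careful identification of $ev_G^{-1}(p)$ with the iterated fiber product; once these are secured, everything else is bookkeeping plus Lemma \ref{connected}.
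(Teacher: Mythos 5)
Your proof is correct and follows essentially the same route as the paper: an induction on the number of vertices that decomposes $\M_{0,1}(X,G)$ as a fiber product over $X$, uses the forgetful map to reduce multi-marked-point evaluation fibers to the one-marked-point case supplied by hypothesis (\ref{hypothesis}), and concludes with Lemma \ref{connected}; peeling off one leaf at a time rather than splitting $G$ into two subtrees at an edge is only a cosmetic difference. You additionally spell out the degree equalities and the inclusion $ev^{-1}_{K_i}(p)\subseteq ev^{-1}_G(p)$, which the paper's proof leaves implicit.
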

\begin{proof}
Assume that $k\geq 2$ (i.e., the graph $G$ has at least two vertices). Since $d_v$ is positive (i.e., a stable map of type $G$ does not contract any component), a stable map $f$ of type $G$ is reducible and it is the union of two stable maps of types $G_1$ and $G_2$ such that the marked point is on $G_1$. In other words, we have the following diagram
\[\xymatrix{ \M_{0,2}(X,G_1)\times_X \M_{0,1}(X,G_2)\ar[dr]_{ev_1} \ar@{->>}[rr] & &\M_{0,1}(X,G)\ar[dl]^{ev_G} \\
&X &}\]
where $v(G_i)=k_i$, $\deg(G_i)=b_i$ and $k_1+k_2=k$, $b_1+b_2=b$. Let $\hat{ev}_1$ and $\hat{ev}_2$ be the two natural evaluation maps of $\M_{0,2}(X,G_1)$
\[(\hat{ev}_1,\hat{ev}_2):\M_{0,2}(X,G_1)\rightarrow X\times X.\]

Note the assumption (\ref{hypothesis}). By the induction on $k$, we conclude that the fiber \[ev_1^{-1}(p)=\hat{ev}^{-1}_1(p)\times_X \M_{0,1}(X,G_2)\] is connected from the fact that the fiber $\hat{ev}_1^{-1}(p)$ is connected and Lemma \ref{connected}. In fact, the following diagram commutes and the fibers of the morphism $F$ (it is the forgetful map forgetting the second marked point) are connected. Therefore, it implies that $\hat{ev}_1^{-1}(p)$ is connected since $ev^{-1}_{G_1}(p)$ is connected by the induction on $k_1=v(G_1)$.
\[\xymatrix{\hat{ev}_1^{-1}(p)\ar[d] \ar@{}[dr]|-{\Box} \ar[r] &\M_{0,2}(X,G_1) \ar[d]^{F} \ar[r]^<<<<{\hat{ev}_1} &X \\
ev_{G_1}^{-1}(p) \ar[r] &\M_{0,1}(X,G_1)\ar[ur]_{ev_{G_1}}}
\]

\end{proof}

\begin{prop} \label{propconnectdualgraph}
Let $\Gamma_1$ and $\Gamma_2$ be two possible dual graphs of the unions of lines in $X$ of total degree $b$ with one marked point. Then the evaluation fibers 
\[ev_{\Gamma_1}^{-1}(p)~and~ev_{\Gamma_2}^{-1}(p)\]are in a common connected component of $ev^{-1}_b(p)$ where $ev_b$ is the evaluation map
\[ev_b:\M_{0,1}(X,b)\rightarrow X.\]
\end{prop}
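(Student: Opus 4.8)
The plan is to assemble the statement from the two lemmas already in hand: the purely combinatorial Lemma \ref{conicdeform}, which connects any two good trees of the same total degree by a finite chain of conic deformations, and the geometric Lemma \ref{lemmone}, which tells us that a dual graph with $d_v\in\{1,2\}$ has connected evaluation fibers and that the fibers of its two conic specializations sit inside its own evaluation fiber. Everything else is bookkeeping about how the strata $\M_{0,1}(X,G)$ embed into $\M_{0,1}(X,b)$.

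First I would apply Lemma \ref{conicdeform} with $k=1$ marked point and total degree $b$ to produce a finite sequence of good trees
\[
\Gamma_1 = H_0, H_1, \ldots, H_m = \Gamma_2,
\]
in which consecutive trees $H_j$ and $H_{j+1}$ are connected by a conic deformation. By the definition of that notion this means there is a dual graph $G_j$ with exactly one vertex of degree two and all remaining vertices of degree one, such that $H_j$ and $H_{j+1}$ are the two conic specializations of $G_j$. Since each $G_j$ satisfies the hypothesis $d_v\in\{1,2\}$, Lemma \ref{lemmone} applies to it.

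Next I would record that, because $\deg(G_j)=b$, the space $\M_{0,1}(X,G_j)$, which parametrizes stable maps whose dual graph is $G_j$ or a specialization of $G_j$, is a substack of $\M_{0,1}(X,b)$ whose evaluation map $ev_{G_j}$ is simply the restriction of $ev_b$; likewise for each good tree $H_j$. Consequently $ev_{G_j}^{-1}(p)\subseteq ev_b^{-1}(p)$ and $ev_{H_j}^{-1}(p)\subseteq ev_b^{-1}(p)$. By Lemma \ref{lemmone}, the fiber $ev_{G_j}^{-1}(p)$ is connected and contains both $ev_{H_j}^{-1}(p)$ and $ev_{H_{j+1}}^{-1}(p)$. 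Thus each $ev_{G_j}^{-1}(p)$ is a connected subset of $ev_b^{-1}(p)$ joining the fibers of two consecutive good trees in the chain.

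Finally I would run the chaining argument: the connected sets $ev_{G_0}^{-1}(p)$ and $ev_{G_1}^{-1}(p)$ share the common nonempty subset $ev_{H_1}^{-1}(p)$, so their union is connected, and inductively $\bigcup_{j=0}^{m-1} ev_{G_j}^{-1}(p)$ is a connected subset of $ev_b^{-1}(p)$ containing both $ev_{\Gamma_1}^{-1}(p)$ and $ev_{\Gamma_2}^{-1}(p)$; this is exactly the desired conclusion. The only points needing care, and the main (mild) obstacle, are the two compatibility checks: that the strata $\M_{0,1}(X,G_j)$ genuinely embed into $\M_{0,1}(X,b)$ compatibly with the evaluation maps, so that connectedness inside $ev_{G_j}^{-1}(p)$ upgrades to connectedness inside $ev_b^{-1}(p)$; and that each overlap fiber $ev_{H_j}^{-1}(p)$ is nonempty, which follows from the nonemptiness in hypothesis (\ref{hypothesis}), since through $p$ one can always attach a chain of lines realizing any prescribed good tree.
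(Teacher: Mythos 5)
Your proposal follows the paper's own proof almost verbatim: the chain of conic deformations from Lemma \ref{conicdeform}, the connectedness of $ev_{G_j}^{-1}(p)$ together with the containments $ev_{H_j}^{-1}(p),\, ev_{H_{j+1}}^{-1}(p) \subseteq ev_{G_j}^{-1}(p)$ from Lemma \ref{lemmone}, and the union-of-overlapping-connected-sets argument are exactly the paper's steps, and your two ``compatibility checks'' (strata embedding into $\M_{0,1}(X,b)$ compatibly with evaluation, nonemptiness of the overlap fibers) are handled the same way.

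The one genuine omission is the very first reduction. You apply Lemma \ref{conicdeform} directly to $\Gamma_1$ and $\Gamma_2$, but that lemma only speaks about \emph{good trees}, i.e.\ graphs with every $d_v=1$. A dual graph of a stable map whose image is a union of lines need not be a good tree: stable maps may contract components, so the $\Gamma_i$ can have vertices with $d_v=0$. The paper disposes of this first by invoking Lemma \ref{starlemm}: each $\Gamma_i$ is a specialization of a good tree $\hat{\Gamma}_i$ of the same degree, hence $\M_{0,1}(X,\Gamma_i)\subseteq \M_{0,1}(X,\hat{\Gamma}_i)$ and in particular $ev_{\Gamma_i}^{-1}(p)\subseteq ev_{\hat{\Gamma}_i}^{-1}(p)$, which reduces the statement to the case of good trees. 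Without this step your chain does not start: you cannot feed a graph with degree-zero vertices into Lemma \ref{conicdeform}, nor does Lemma \ref{lemmone} (which assumes $d_v\in\{1,2\}$) cover such graphs. The fix is exactly one sentence, but it is needed for the argument to apply to the statement as written rather than to the special case where $\Gamma_1,\Gamma_2$ are already good trees.
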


\begin{proof}
By Lemma \ref{starlemm}, we know $\Gamma_i$ is a specialization of a good tree $\hat{\Gamma}_i$ of degree $b$, in particular, we have \[\M_{0,1}(X,\Gamma_i)\subseteq \M_{0,1}(X,\hat{\Gamma_i}).\] Therefore, we can assume that $\Gamma_i$ is a good tree for $i=1,2$. By Lemma \ref{conicdeform}, we have the following series of conic deformations to connect $\Gamma_1$ and $\Gamma_2$
\[\xymatrix{ &K_0 & &K_1&&\ldots& &K_{k+1} & \\
\hat{\Gamma}_0 \ar[ur]& \ar@{}[u]|-{conic}&\hat{\Gamma}_1\ar[ul]\ar[ur] &\ar@{}[u]|-{conic} & \hat{\Gamma}_2\ar[ul] \ar[ur] &\ldots &\hat{\Gamma_k}\ar[ur]\ar[ul] &\ar@{}[u]|-{conic} &\hat{\Gamma}_{k+1}\ar[ul] }\] where $\hat{\Gamma}_0=\Gamma_1$ and $\hat{\Gamma}_{k+1}=\Gamma_2$. Applying Lemma \ref{lemmone}, we know the fibers $ev_{\hat{\Gamma_i}}^{-1}(p)$ and  $ev_{\hat{\Gamma}_{i+1}}^{-1}(p)$ are in a common connected component of $ev_b^{-1}(p)$. We have proved the proposition.
\end{proof}
We provide an example to illustrate this proposition.
\begin{example}
With the notations as in Example \ref{ex}, we have the evaluation maps as follows
\[Ev_{\Gamma_i}:\M_{0,1}(X,\Gamma_i) \rightarrow X.\]
We use a conic deformation to connect $Ev_{\Gamma_i}^{-1}(p)$ and $Ev_{\Gamma_{i+1}}^{-1}(p)$ in $ev^{-1}_b(p)$ to show that the general fiber $Ev_{\Gamma_i}^{-1}(p)$ over a general point $p\in X$ is contained in a connected component of $ev^{-1}_b(p)$. We explain it for $i=1$, for arbitrary $i$, it is similar. By the hypothesis (\ref{hypothesis}) and Lemma \ref{connected}, we can prove the evaluation map
\[Ev:\underbrace{\M_{0,2}(X,2)\times_{X}\M_{0,2}(X,1)\times_X\ldots \times_X \M_{0,1}(X,1)}_{b-1~factors}\rightarrow X\] has connected fibers as above, where the first factor parametrizes the conics with two marked points. It is clear that the fiber $Ev^{-1}(p)$ over the point $p\in X$ contains fibers $Ev_{\Gamma_1}^{-1}(p)$  and $Ev_{\Gamma_2}^{-1}(p)$. Hence, we use this conic deformation to connect \[Ev_{\Gamma_1}^{-1}(p)~ and~Ev_{\Gamma_{2}}^{-1}(p).\]

\end{example}

\section{Special Loci, Geometry of Spaces of Quadrics}
In this section, we always assume, unless otherwise noted, that \begin{equation}\label{sixstar}
n\geq \binom {d+1}2 +d~\text{and}~ d\geq 3.
\end{equation}

\begin{prop}\label{propirr}
The stack $\M_{0,0}(\Fa(X_d),1)$ is smooth and irreducible for $X_d$ general. Moreover, we have \[\dim(\M_{0,0}(\Fa(X_d),1))=3n-\binom {d+1}2 -d-5\]for $X_d$ general.
\end{prop}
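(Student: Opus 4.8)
The plan is to study the space of lines in $\Fa(X_d)$ through the moduli of conics on $X_d$, using the standard parametrized-curves-plus-group-quotient strategy that already appears in Proposition \ref{prophomology}. A line in the Fano variety $\Fa(X_d)$ with respect to the Plücker embedding corresponds to a \emph{pencil of lines} in $X_d$, i.e. to a $2$-plane $\PP^2 \subseteq X_d$ together with a point $p \in \PP^2$; the lines in this pencil sweep out the plane and form a $\PP^1 \subseteq \Fa(X_d)$. Thus a point of $\M_{0,0}(\Fa(X_d),1)$ is essentially the datum of a flag $(p \in \PP^2 \subseteq X_d)$, which is parametrized by the incidence variety sitting over the flag variety $\Fl(\PP^1,\PP^2;X_d) = \II$ of (\ref{incidence}). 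The first thing I would do is make this identification precise, exhibiting a morphism from a suitable incidence/flag variety to $\M_{0,0}(\Fa(X_d),1)$ and checking it is an isomorphism onto its image (or a $\textbf{PGL}_2$-quotient of the parametrized version $\M_{0,0}(\PP^1,\Fa(X_d),1)$, which is cleaner for dimension counts).

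Next I would run the incidence-variety argument exactly as in the proof of Lemma \ref{lemmplanere}: form the universal incidence
\[
\mathbb{I} = \{(p \in \PP^2 \subseteq \PP^n,\, X_d) \mid \PP^2 \subseteq X_d\}
\]
over $\PP\Ho^0(\PP^n,\OO_{\PP^n}(d))$, with projection $\pr_1$ to the flag variety of $(p,\PP^2)$ in $\PP^n$ and $\pr_2$ to the space of hypersurfaces. The fiber of $\pr_2$ over $[X_d]$ is precisely the parameter space I want, and the smoothness and irreducibility for general $X_d$ follow from the standard Bertini/incidence method: one checks $\pr_1$ is a fibration with irreducible smooth fibers (a linear system of hypersurfaces containing a fixed flagged plane), so $\mathbb{I}$ is smooth and irreducible, and then argues $\pr_2$ is generically smooth so that the general fiber is smooth of the expected dimension. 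Smoothness of $\M_{0,0}(\Fa(X_d),1)$ itself should then follow because a line in the Fano variety of a generic $X_d$ is a free rational curve, so $H^1$ of its normal bundle vanishes and the moduli stack is smooth at that point; I would invoke the deformation theory of stable maps to $\Fa(X_d)$ here, using that $\Fa(X_d)$ is itself smooth under (\ref{sixstar}).

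The dimension count is the bookkeeping step. The space of flags $(p \in \PP^2 \subseteq \PP^n)$ has dimension $2 + 3(n-2) = 3n-4$; imposing $\PP^2 \subseteq X_d$ cuts down by the codimension of planes in $X_d$, which for degree $d$ is $\binom{d+2}{2}$ conditions on the $3$-dimensional space of sections, giving the expected drop, and then one subtracts $\dim \textbf{PGL}_2 = 3$ if working with the unparametrized $\M_{0,0}$ versus a choice-of-$p$ normalization — I would reconcile these carefully against the target value $3n-\binom{d+1}{2}-d-5$. The cleanest route is to dimension-count $\M_{0,0}(\Fa(X_d),1)$ directly via $-K_{\Fa(X_d)}\cdot \ell + \dim \Fa(X_d) - 3$, using $\dim \Fa(X_d) = 2n-d-3$ and the known value of $-K_{\Fa(X_d)}\cdot \ell$ for a line $\ell$ in the Plücker embedding.

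The main obstacle I anticipate is not the irreducibility — that is routine incidence-variety technology — but rather verifying that the moduli space has the \emph{expected} dimension, equivalently that the flag variety $\mathbb{I}$ is a complete intersection of the expected codimension inside the flag variety of $\PP^n$, and that the generic fiber of $\pr_2$ genuinely attains smoothness rather than being everywhere obstructed. This is where the hypothesis (\ref{sixstar}) must be used decisively, to guarantee both that $X_d$ contains $2$-planes at all (so the space is nonempty) and that the relevant incidence correspondence dominates the hypersurface-parameter space. I expect the degree computation feeding into $\binom{d+1}{2}+d$ to be the delicate part, requiring the same Fano-type complete-intersection analysis of the equations (\ref{typeeq}) that was used in Section 2.
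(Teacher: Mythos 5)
Your proposal is correct and follows essentially the same route as the paper: the paper's proof is exactly the incidence variety $I=\{(l,X_d)\mid l\subseteq \Fa(X_d)\}$ fibered over $\M_{0,0}(G(2,n+1),1)$ (which is your flag variety of pairs $p\in\PP^2\subseteq\PP^n$, since a line in the Pl\"ucker embedding is a pencil of lines sweeping out a plane), with $pr_1$-fibers the linear systems of degree-$d$ hypersurfaces containing that plane, followed by the method of \cite[Theorem 4.3]{K} for generic smoothness of $pr_2$. One small point you can settle now: no $\textbf{PGL}_2$ quotient enters, since the flag $(p,\PP^2)$ already determines the unparametrized line, and indeed $3n-4-\binom{d+2}{2}=3n-\binom{d+1}{2}-d-5$ agrees with your $-K_{\Fa(X_d)}\cdot \ell+\dim\Fa(X_d)-3$ computation.
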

\begin{proof}
It is easy to prove this proposition by considering the smooth incidence subvariety $I$ as follows:
\[ \xymatrix{ & I=\{(l,X_d)| l\subseteq \Fa(X_d)\}\ar[ld]_{pr_1} \ar[rd]^{pr_2} &\\
\M_{0,0}(G(2,n+1),1) & &\PP(\Ho^0(\PP^n,\OO_{\PP^n}(d)))}
\]where $l$ is a line in $G(2,n+1)$. The line $l$ sweep out a 2-plane $\PP^2$ in $\PP^n$, the fiber of $pr_1$ over $[l]$ is the projective space parametrizing hypersurfaces of degree $d$ containing this 2-plane. Therefore, we conclude $I$ is smooth since $\M_{0,0}(G(2,n+1),1)$ is smooth, see \cite{FP}. On the other hand, we can follow the same method as the proof of \cite[Theorem 4.3]{K} to show the codimension of the singular locus of $pr_2$ is at least $2$. Hence, it is the same as \cite[Theorem 4.3]{K} that we can conclude $\M_{0,0}(\Fa(X_d),1)$ is smooth and connected of expected dimension for $X_d$ general. We leave the details to the reader.
\end{proof}

\subsection{Special Loci} We have a natural map
\[\M_{0,0}(\Fa(X_d),1)\times \M_{0,0}(\PP^1,2) \rightarrow \M_{0,0}(\Fa(X_d),2)\]
\[(f,g)\mapsto f\circ g\]whose image parametrizes double lines in $\Fa(X_d)$. Denote this image by $L_1$. We call the locus $L_1$ is special. Similarly, we have a closed substack $L$ of $\M_{0,0}(G(2,n+1),2) $ parametrizing double lines in $G(2,n+1)$. It is clear that the dimension of the special locus $L_1$ is
\[ \dim(\M_{0,0}(\Fa(X_d),1) )+ \dim (\M_{0,0}(\PP^1,2))=3n-\binom {d+1}2-d.\]
Moreover, it follows from Proposition \ref{propirr} that the special locus $L_1$ is irreducible for $X_d$ general. Similarly, we know $L$ is also irreducible since $\M_{0,0}(G(2,n+1),1)$ is irreducible, see \cite{Kim}.

We have the following incidence correspondence:
\begin{equation} \label{picstar}
\xymatrix{ & I=\{(C,X_d)| C\subseteq \Fa(X_d)\}\ar[ld]_{pr_1} \ar[rd]^{pr_2} &\\
\M_{0,0}(G(2,n+1),2) & &\PP(\Ho^0(\PP^n,\OO_{\PP^n}(d)))}
\end{equation} 
where $C$ is a conic in $G(2,n+1)$ and $\PP(\Ho^0(\PP^n,\OO_{\PP^n}(d)))$ is the space parametrizing hypersurfaces of degree $d$ in $\PP^n$. It is clear that $pr_2$ is surjective if any general hypersurface of degree $d$ contains a 2-plane.

\subsection{Geometry of Conics}
\begin{lemm} \label{lemmnotirr}
The special locus $L_1$ is not an irreducible component of \[\M_{0,0}(\Fa(X_d),2)\] for $X_d$ general.
\end{lemm}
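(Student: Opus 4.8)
The plan is to establish the lemma by a dimension count, playing the dimension of the irreducible locus $L_1$ against the \emph{expected} dimension of $\M_{0,0}(\Fa(X_d),2)$. The one structural input I would use is the deformation-theoretic lower bound: because $\Fa(X_d)$ is smooth for $X_d$ general, every irreducible component of the Kontsevich space $\M_{0,0}(\Fa(X_d),2)$ has dimension at least its expected dimension $\dim\Fa(X_d)-3+2\big(-K_{\Fa(X_d)}\cdot l\big)$, where $l$ is the class of a line (see \cite{K}). Hence it is enough to show that $\dim L_1$ lies \emph{strictly below} this expected dimension: an irreducible locus whose dimension is smaller than the least possible dimension of a component cannot itself be a component.

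First I would record the expected dimension. Since $\dim\Fa(X_d)=2n-d-3$ and, by Proposition \ref{propirr}, $\dim\M_{0,0}(\Fa(X_d),1)=3n-\binom{d+1}{2}-d-5$, comparing the latter with the expected dimension $\dim\Fa(X_d)-3+\big(-K_{\Fa(X_d)}\cdot l\big)$ of the space of lines forces
\[ -K_{\Fa(X_d)}\cdot l=n+1-\binom{d+1}{2}. \]
Therefore the expected dimension of $\M_{0,0}(\Fa(X_d),2)$ equals $4n-2\binom{d+1}{2}-d-4$.

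Next I would bound $\dim L_1$ from above. As $L_1$ is the image of the proper map $\M_{0,0}(\Fa(X_d),1)\times\M_{0,0}(\PP^1,2)\to\M_{0,0}(\Fa(X_d),2)$ and $\dim\M_{0,0}(\PP^1,2)=2$ (a degree-$2$ cover of a fixed $\PP^1$ is determined up to reparametrization by its two branch points), one gets $\dim L_1\le 3n-\binom{d+1}{2}-d-3$. Subtracting,
\[ \Big(4n-2\binom{d+1}{2}-d-4\Big)-\Big(3n-\binom{d+1}{2}-d-3\Big)=n-1-\binom{d+1}{2}, \]
which is at least $d-1\ge 2$ once $n\ge\binom{d+1}{2}+d$ and $d\ge 3$, i.e. under (\ref{sixstar}). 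Thus $\dim L_1$ is strictly smaller than the expected dimension, and by the lower bound above $L_1$ is not a component. As a bonus, $L_1$ is then contained in a strictly larger component of $\M_{0,0}(\Fa(X_d),2)$ whose generic point is an honest conic, so a general double line deforms to a genuine conic --- precisely the smoothing needed for the connectedness results later in Section 6.

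The main obstacle is making the lower bound legitimately available rather than the arithmetic itself: one needs $\Fa(X_d)$ smooth (so that $\M_{0,0}(\Fa(X_d),2)$ has the expected deformation theory and the principle that every component has dimension at least the expected one applies), and one must extract $-K_{\Fa(X_d)}\cdot l$ from Proposition \ref{propirr} rather than computing the normal bundle of a line in $\Fa(X_d)$ by hand. A minor point is that the whole count takes place with stack dimensions; since a general double line carries only the finite automorphism group $\mathbb{Z}/2$, stack and coarse dimensions coincide and the strict inequality is unaffected.
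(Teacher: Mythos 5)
Your proof is correct, but it takes a genuinely different route from the paper's. The paper argues by explicit smoothing: a general point of $L_1$ is a double cover of a general free line $l\subseteq \Fa(X_d)$, and the paper shows that in the splitting $T_{\Fa(X_d)}|_{l}=\bigoplus_i\OO_{\PP^1}(a_i)$ one has $a_3\geq 1$ (otherwise $h^0(\PP^1,T_{\Fa(X_d)}(-2)|_{l})\geq a_1+a_2-2=n-1-\binom{d+1}{2}\geq 2$ would exceed the dimension of the space of maps fixing two points), and then invokes Koll\'ar's criterion [K, II.3.14.3] to deform the double line to a smooth conic. Your route instead compares $\dim L_1$ with the lower bound "every component has at least the expected dimension" --- an ingredient the paper itself uses in Lemma \ref{lemmcon}, so it is legitimately available --- and your numbers check out: $\dim L_1=3n-\binom{d+1}{2}-d-3$, expected dimension $4n-2\binom{d+1}{2}-d-4$, gap $n-\binom{d+1}{2}-1\geq d-1>0$ under (\ref{sixstar}). (Your value of $\dim L_1$ is in fact the correct one; the paper's displayed value $3n-\binom{d+1}{2}-d$ adds $\dim\Mor_2(\PP^1,\PP^1)=5$ rather than $\dim\M_{0,0}(\PP^1,2)=2$, and with that inflated value your comparison would fail for small $n$, so getting this right is essential to your argument.) What your approach does not buy is the "bonus" you claim at the end: knowing $L_1$ sits inside a strictly larger component does not by itself say that the generic member of that component is a smooth conic rather than, say, a pair of distinct lines; and it is exactly the statement "a general double line deforms to a smooth conic" that the paper re-uses in the proof of Lemma \ref{lemmairr}. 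So your proof fully establishes the lemma as stated, but the paper's tangent-bundle argument is still needed where that stronger deformation statement is invoked later.
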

\begin{proof}
We know $L_1$ is irreducible by Proposition \ref{propirr}. It is clear that a general point of $L_1$ parametrizes a conic which is a double cover of a free line passing through a general point of $\Fa(X_d)$. Recall that the space $Y$ of lines in $X_d$ through a point $p$ is defined by the equations in $\PP^n$ of type \begin{equation}\label{degrees}
(1,1,2,3,\dots,d-1,d).
\end{equation}For each point $y\in Y$, the space $Y$ has a line through $y$ if the sum of these degrees (\ref{degrees}) is at most $n-1$, see \cite[Chapter V Exercise 4.10.5 ]{K}. It follows that $\Fa(X_d)$ is covered by lines. Therefore, for a general line $\PP^1$ in $\Fa(X_d)$, it is free and we have
\[T_{\Fa(X_d)}|_{\PP^1}=\OO_{\PP^1}(a_1)\oplus\OO_{\PP^1}(a_2)\oplus \ldots \oplus \OO_{\PP^1}(a_{2n-d-3})\]where $T_{\Fa(X_d)}$ is the tangent sheaf of $\Fa(X_d)$, $a_i\geq 0$ and $a_i\geq a_{i+1}$. 

We claim that $a_3\geq 1$. In fact, if $a_3=a_4=\ldots=a_{2n-d-3}=0$, then we have \[a_1+a_2=\sum\limits_{i=1}^{2n-d-3} a_i =\deg( T_{\Fa(X_d)})=n+1-\binom {d+1}2\geq 4.\] The space $\Mor_1(\PP^1, \Fa(X_d); \{0,1\})$ of maps fixing $0$ and $1$ is the disjoint union of copies of $\mathbb{C}^*$. On the other hand, the dimenison of the tangent space of $$\Mor_1(\PP^1, \Fa(X_d); \{0,1\})$$ is 
$h^0(\PP^1,T_{\Fa(X_d)}(-2)|_{\PP^1})=h^0(\PP^1, \OO_{\PP^1}(a_1-2)\oplus \ldots \oplus \OO_{\PP^1}(a_{2n-d-3}-2))\geq 2$ which is a contradiction. So we prove our claim. 

In particular, by \cite[Theorem 3.14.3, Chapter II]{K}, a general deformation of a double cover of a general (free) line is a smooth conic in $\Fa(X_d)$. It implies that $L_1$ is not an irreducible component of $\M_{0,0}(\Fa(X_d),2)$.
\end{proof}

We apply a similar method to prove the following lemma.

\begin{lemm} \label{lemmairr}
The incidence correspondence $I$ (\ref{picstar}) is irreducible.
\end{lemm}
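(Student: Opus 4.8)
The goal is to show the incidence correspondence
\[
I=\{(C,X_d)\mid C\subseteq \Fa(X_d)\}\subseteq \M_{0,0}(G(2,n+1),2)\times \PP(\Ho^0(\PP^n,\OO_{\PP^n}(d)))
\]
is irreducible, where $C$ ranges over conics in $G(2,n+1)$. The plan is to exploit the projection $pr_1:I\rightarrow \M_{0,0}(G(2,n+1),2)$ and argue fiberwise, exactly as in the line case treated in Proposition \ref{propirr}. Since $\M_{0,0}(G(2,n+1),2)$ is irreducible (this is the space $L$'s ambient stack, and irreducibility of spaces of conics in a Grassmannian is cited via \cite{Kim}), it suffices to show that $pr_1$ is a fibration with irreducible fibers of constant dimension over the irreducible base; then $I$, being an irreducible fiber bundle over an irreducible base, is itself irreducible.

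First I would analyze the fiber of $pr_1$ over a point $[C]\in \M_{0,0}(G(2,n+1),2)$ parametrizing a conic $C$ in $G(2,n+1)$. Such a conic sweeps out a subvariety of $\PP^n$ (the union of the lines parametrized by $C$), and the fiber $pr_1^{-1}([C])$ is the linear system of degree-$d$ hypersurfaces in $\PP^n$ containing this swept-out locus. Because any linear system (a projective subspace cut out by the vanishing conditions) is irreducible, each fiber is irreducible provided it is nonempty. The main content is therefore to check that the dimension of this fiber is \emph{constant} as $[C]$ varies, i.e. that the number of linear conditions imposed on degree-$d$ forms by ``containing the ruled surface swept by $C$'' does not jump. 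One must treat the reducible and double-line degenerations of $C$ as well, since $\M_{0,0}(G(2,n+1),2)$ contains the special locus $L$; here the count of conditions should still match, so that $pr_1$ has equidimensional fibers over all of the irreducible base.

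The hard part will be controlling the fiber dimension of $pr_1$ uniformly, especially over the boundary strata (reducible conics and the double-line locus $L$). For a smooth conic, the lines swept out form an expected ruled surface and the number of conditions imposed on $\OO_{\PP^n}(d)$ is computable; for degenerate conics one must verify the same count, using the numerical hypothesis \eqref{sixstar}, namely $n\geq \binom{d+1}2+d$, which is precisely what guarantees the relevant linear systems stay nonempty and of the expected codimension. I expect to invoke the same upper-semicontinuity argument on fiber dimension used for lines in Proposition \ref{propirr}, combined with a direct computation showing the generic fiber dimension is achieved everywhere; this forces $I$ to be irreducible of the expected dimension. Once equidimensionality of $pr_1$ is established, irreducibility of $I$ follows formally from irreducibility of $\M_{0,0}(G(2,n+1),2)$, completing the proof.
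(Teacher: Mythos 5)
Your overall frame---project to the irreducible space $\M_{0,0}(G(2,n+1),2)$ and study the fibers of $pr_1$, which are linear systems and hence irreducible---is the same as the paper's, and it works exactly as you describe over the open locus $U=\M_{0,0}(G(2,n+1),2)-L$ of non-double-line conics: there the swept-out locus is a quadric surface $Q_c$ (smooth, a cone, or two planes meeting in a line), and in every such case containing $Q_c$ imposes $h^0(Q_c,\OO_{Q_c}(d))=(d+1)^2$ conditions, so $pr_1^{-1}(U)$ is irreducible. But your key claim that the fiber dimension is \emph{constant} over all of $\M_{0,0}(G(2,n+1),2)$, including over the double-line locus $L$, is false, and no numerical hypothesis on $n$ can repair it. Over a point of $L$ the conic sweeps out only a $2$-plane, which imposes merely $h^0(\PP^2,\OO_{\PP^2}(d))=\binom{d+2}{2}$ conditions; the fiber of $pr_1$ therefore jumps up in dimension by $(d+1)^2-\binom{d+2}{2}=\frac{d(d+1)}{2}$ over $L$. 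This is an intrinsic geometric degeneration (the swept surface drops from a quadric to a plane), not a transversality issue that the bound $n\geq\binom{d+1}{2}+d$ could control. So the ``equidimensional fibration over an irreducible base'' argument collapses precisely at the locus you flag as the hard part.

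The missing idea is how to deal with $pr_1^{-1}(L)$ once you accept that it is a separate irreducible closed subset with too-large fibers: you must show it is \emph{not} an irreducible component of $I$, i.e.\ that it lies in the closure of $pr_1^{-1}(U)$. The paper does this with a deformation argument borrowed from Lemma \ref{lemmnotirr}: a general point of $pr_1^{-1}(L)$ maps under $pr_2$ to a general hypersurface $[X_d]$, and for such $X_d$ a double cover of a general (free) line of $\Fa(X_d)$ deforms to a smooth conic in $\Fa(X_d)$ (using that the restricted tangent bundle has at least three positive summands, which uses $-K_{\Fa(X_d)}\cdot[\text{line}]=n+1-\binom{d+1}{2}\geq 4$). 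That deformation exhibits points of $pr_1^{-1}(L)$ as limits of points of $pr_1^{-1}(U)$, whence $I=\overline{pr_1^{-1}(U)}$ is irreducible. Without some such argument---equivalently, without showing the double-line stratum is in the closure of the rest---your proof does not go through.
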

\begin{proof}
Denote by $U$ the open substack $\M_{0,0}(G(2,n+1),2)-L$. By the main theorem of \cite{Kim}, we know $\M_{0,0}(G(2,n+1),2)$ is irreducible. Hence, the stack $U$ is irreducible. On the other hand, every point $c\in U$ sweeps out a quadric $Q_c\in \PP^n$ (it may be singular), the fiber of $pr_1$ over $c$ is the projective space  parametrizing hypersurfaces containing $Q_c$. The projective space is a subspace of $\PP \Ho^0(\PP^n,\OO_{\PP^n}(d))$ of codimension $(d+1)^2$, in fact, it is given by
\begin{equation} \label{quadric}
\xymatrix{\PP(\mathrm{Ker}:\Ho^0(\PP^n,\OO_{\PP^n}(d))\ar@{->>}[r] &\Ho^0(Q_c,\OO_{Q_c}(d)))}.
\end{equation} In particular, the open substack $pr_1^{-1}(U)$ of $I$ is irreducible. On the other hand, a point $u\in L$ parametrizing a double line in $\Fa(X_d)$ sweeps out a 2-plane in $\PP^n$. As above, the fiber of $pr_1$ over $u$ is the projective space as follows:
\[\xymatrix{\PP(\mathrm{Ker}:\Ho^0(\PP^n,\OO_{\PP^n}(d))\ar@{->>}[r] &\Ho^0(\PP^2,\OO_{\PP^2}(d)))}.\] In particular, the closed substack $pr_1^{-1}(L)$ is irreducible.

We claim that $pr_1^{-1}(L)$ is not an irreducible component of $I$. The proof is similar to the proof of Lemma \ref{lemmnotirr}. Since a general hypersurface of degree $d$ in $\PP^n$ contains a 2-plane, a general point $u$ in $pr^{-1}_1(L)$ maps to a general point of $\PP \Ho^0(\PP^n,\OO_{\PP^n}(d))$ via $pr_2$. Denote $pr_2(u)$ by $[X_d]$. On the other hand, $L_1= pr_1^{-1}(L)\cap pr_2^{-1}([X_d])$. By the argument as in the proof of Lemma \ref{lemmnotirr}, we can deform the double line in $\Fa(X_d)$ associated to $u$ to a smooth conic in $\Fa(X_d)$. In particular, the preimage $pr_1^{-1}(L)$ is not an irreducible component of $I$.

In summary, we prove that $I$ is irreducible.

\end{proof}

\begin{lemm} \label{lemmsm}
The space $\M_{0,0}(\Fa(X_d),2)-L_1$ is smooth (as a scheme) for $X_d$ general.
\end{lemm}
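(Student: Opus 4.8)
The plan is to deduce smoothness of the fibre from smoothness of the total space of the incidence correspondence $I$ in \eqref{picstar}, via generic smoothness in characteristic zero. Recall $pr_2^{-1}([X_d])=\M_{0,0}(\Fa(X_d),2)$, and, as noted in the proof of Lemma \ref{lemmairr}, the special locus is the slice $L_1=pr_1^{-1}(L)\cap pr_2^{-1}([X_d])$. Setting $U:=\M_{0,0}(G(2,n+1),2)-L$ and $I^{\circ}:=pr_1^{-1}(U)=I-pr_1^{-1}(L)$, the fibre of $pr_2|_{I^{\circ}}$ over $[X_d]$ is therefore exactly $\M_{0,0}(\Fa(X_d),2)-L_1$. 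So it suffices to prove that $I^{\circ}$ is smooth and that $pr_2|_{I^{\circ}}$ is dominant.

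First I would realize $pr_1\colon I^{\circ}\to U$ as a projective bundle. The base is smooth: the Grassmannian $G(2,n+1)$ is homogeneous, hence convex, so $\M_{0,0}(G(2,n+1),2)$ is a smooth Deligne--Mumford stack by Fulton--Pandharipande, and $U$ is open in it. The fibre of $pr_1$ over a conic $c\in U$ is the linear system of degree $d$ hypersurfaces containing the surface $Q_c$ swept out by the lines of $c$, namely $\PP\bigl(\ker(\Ho^0(\PP^n,\OO_{\PP^n}(d))\to \Ho^0(Q_c,\OO_{Q_c}(d)))\bigr)$ as in \eqref{quadric}; so it is enough to show that $h^0(Q_c,\OO_{Q_c}(d))$ is constant on $U$.

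This constancy is the crux, and it is exactly where excluding double lines is essential. For $c\in U$ the surface $Q_c$ is always a genuine two-dimensional quadric: if $c$ is a smooth conic then $Q_c$ is an irreducible quadric surface (smooth or a cone) spanning a $\PP^3$, and the Koszul sequence $0\to\OO_{\PP^3}(d-2)\to\OO_{\PP^3}(d)\to\OO_{Q_c}(d)\to 0$ yields $h^0(Q_c,\OO(d))=\binom{d+3}{3}-\binom{d+1}{3}=(d+1)^2$; if $c$ is reducible then $Q_c$ is a union of two $2$-planes meeting along a line, and a Mayer--Vietoris computation gives $2\binom{d+2}{2}-(d+1)=(d+1)^2$. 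In either case the value is the constant $(d+1)^2$, matching the codimension in \eqref{quadric}. By contrast a double line sweeps out only a $2$-plane, where $h^0(\PP^2,\OO(d))=\binom{d+2}{2}\neq(d+1)^2$, so the fibre dimension jumps and the bundle structure degenerates; this is precisely why $L$, and hence $L_1$, must be removed. Since $U$ is reduced and $h^0(Q_c,\OO(d))$ is constant, the relative restriction map is a surjection of vector bundles of constant rank, its kernel $\mathcal K$ is locally free, and $I^{\circ}=\PP(\mathcal K)$ is a projective bundle over the smooth stack $U$; hence $I^{\circ}$ is smooth.

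Finally, $I$ is irreducible by Lemma \ref{lemmairr} and $pr_1^{-1}(L)$ is not a component of it, so $I^{\circ}$ is dense in $I$; moreover $pr_2|_{I^{\circ}}$ is dominant because a general $\Fa(X_d)$ is covered by free lines (see the proof of Lemma \ref{lemmnotirr}) and hence contains smooth conics, so the general fibre is nonempty. Applying generic smoothness in characteristic zero to the dominant morphism $pr_2\colon I^{\circ}\to\PP(\Ho^0(\PP^n,\OO_{\PP^n}(d)))$ of smooth stacks, there is a dense open $V$ in the target over which $pr_2$ is smooth; for $[X_d]\in V$ the fibre $\M_{0,0}(\Fa(X_d),2)-L_1$ is then smooth. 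Since away from $L_1$ the stable conics carry no nontrivial automorphisms, smoothness as a stack agrees with smoothness as a scheme, giving the assertion. The only genuine computation is the constancy of $h^0(Q_c,\OO(d))$; everything else is formal.
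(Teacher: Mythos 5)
Your proof is essentially the paper's: exhibit $I-pr_1^{-1}(L)$ as a projective bundle over the smooth stack $\M_{0,0}(G(2,n+1),2)-L$ and conclude by generic smoothness of $pr_2$ in characteristic zero; you simply make explicit the constancy computation $h^0(Q_c,\OO_{Q_c}(d))=(d+1)^2$ and the dominance of $pr_2$, which the paper leaves implicit. One caveat, shared with (and inherited from) the paper's own proof: the claim that every conic $c\in U$ sweeps out a surface spanning a $\PP^3$ fails for conics contained in a sub-Grassmannian $G(2,3)\subseteq G(2,n+1)$ (e.g.\ a smooth conic of tangent lines to a plane conic, or two pencils of lines in the same $2$-plane), where $Q_c$ is only a $2$-plane, $h^0(Q_c,\OO_{Q_c}(d))$ drops to $\binom{d+2}{2}$, and the fibre of $pr_1$ jumps --- so the projective-bundle structure, and hence the smoothness of $I-pr_1^{-1}(L)$ along that locus, requires a separate argument.
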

\begin{proof}
We show the stack $I-pr_1^{-1}(L)$ is smooth first. In fact, the stack $I-pr_1^{-1}(L)$ is the preimage of the smooth stack $\M_{0,0}(G(2,n+1),2)-L$ (see \cite{FP}) via $pr_1$. The fiber of $pr_1$ over a point $[C]\in \M_{0,0}(G(2,n+1),2)-L$ is just the projective subspace parametrizing hypersurfaces of degree $d$ containing the quardic $Q_C\subseteq \PP^n$ where $Q_C$ is swept out by the lines parametrized by $C$, see (\ref{quadric}).
It implies that the stack $I-pr_1^{-1}(L)$ is smooth. It is clear that \[\M_{0,0}(\Fa(X_d),2)-L_1=(I-pr_1^{-1}(L))\cap pr_2^{-1}([X_d]).\] Since the space $I-pr_1^{-1}(L)$ is smooth. We conclude this lemma by the generic smoothness theorem via $pr_2$.
\end{proof}

\subsection{Geometry of Quadrics}
\begin{lemm}\label{lemmcon}
The space $\M_{0,0}(\Fa(X_d),2)$ is connected if \[n \geq \binom {d+1}2+d-1~\text{and}~d\geq 3.\]
\end{lemm}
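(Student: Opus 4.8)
The plan is to establish connectedness for every $X_d$ in the stated range by a Stein-factorization reduction to the general case, and then to handle the general case by degenerating conics onto the double-line locus $L_1$.

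First I would set up the reduction. Let $B=\PP(\Ho^{0}(\PP^n,\OO_{\PP^n}(d)))$ and use the incidence correspondence $I$ of (\ref{picstar}), whose projection $pr_{2}\colon I\to B$ is proper with fibre $\M_{0,0}(\Fa(X_d),2)$ over $[X_d]$. By Lemma \ref{lemmairr} the space $I$ is irreducible, and since a general degree-$d$ hypersurface contains a $2$-plane the morphism $pr_{2}$ is dominant, hence surjective. Taking the Stein factorization $I\xrightarrow{h}B'\xrightarrow{g}B$ (so $h$ is proper with connected fibres and $g$ is finite, and $B'$ is irreducible), the fibre $pr_{2}^{-1}(b)=\coprod_{b'\in g^{-1}(b)}h^{-1}(b')$ has exactly $\deg g$ connected components. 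Thus it is enough to find one $[X_d]$ — equivalently to treat a general $X_d$ — for which $\M_{0,0}(\Fa(X_d),2)$ is connected: this gives $\deg g=1$, so $g$ is finite and birational onto the normal variety $B$, hence an isomorphism, and therefore every fibre of $pr_{2}$, i.e. $\M_{0,0}(\Fa(X_d),2)$ for all $X_d$ in the range, is connected.

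Next I would reduce the general case to a statement about components meeting $L_1$. Fix a general $X_d$ and write $\M=\M_{0,0}(\Fa(X_d),2)$. By Lemma \ref{lemmsm} the locus $\M-L_1$ is smooth, so its connected and irreducible components coincide, say $Z_1,\dots,Z_k$, and two distinct closures $\overline{Z_i},\overline{Z_j}$ can meet only inside $L_1$, since a crossing point of two components is singular. By Lemma \ref{lemmnotirr} the set $L_1$ is not a component, so $\M-L_1$ is dense and the $\overline{Z_i}$ are precisely the components of $\M$; as $L_1$ is irreducible (Proposition \ref{propirr}) it lies in a single $\overline{Z_{i_0}}$. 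Hence if every $\overline{Z_i}$ meets $L_1$ then $\overline{Z_i}\cap\overline{Z_{i_0}}\supseteq\overline{Z_i}\cap L_1\neq\emptyset$, the components form a connected star around $\overline{Z_{i_0}}$, and $\M$ is connected; a component missing $L_1$ would instead split off. So everything comes down to showing that every irreducible component of $\M$ contains a double line.

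The hard part will be this last degeneration. Each $\overline{Z_i}$ is complete of the expected dimension $4n-d-4-2\binom{d+1}{2}>0$ (the general fibre of $pr_{2}$ over the irreducible $I-pr_{1}^{-1}(L)$ being smooth and equidimensional). If the general member of $\overline{Z_i}$ is a reduced pair of lines $l_1\cup l_2$, I would let $l_2$ degenerate to $l_1$ to reach $L_1$. If the general member is a smooth, hence free, conic, I would fix a general point $q$ on it and consider the sub-family of conics in $\overline{Z_i}$ through $q$, of dimension at least $2n-1-2\binom{d+1}{2}>0$; bend-and-break then forces a reducible degeneration inside $\overline{Z_i}$, which I again collapse onto a double line. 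The delicate point is to guarantee that a complete family of smooth conics genuinely acquires a reducible (and then non-reduced) limit, and that the two rational tails produced by bend-and-break are themselves lines in $\Fa(X_d)$ — here I would invoke the freeness of lines established in the proof of Lemma \ref{lemmnotirr}. Once this degeneration is secured, the star argument and the Stein step are formal.
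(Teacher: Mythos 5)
Your two reduction steps are sound and essentially match the paper's framework: the Stein-factorization argument reducing to general $X_d$ is the "degeneration argument" the paper invokes, and the observation that $\M_{0,0}(\Fa(X_d),2)-L_1$ is smooth, so that distinct components can only meet inside the irreducible locus $L_1$, correctly reduces everything to showing that each irreducible component reaches $L_1$ (or, more weakly, reaches some connected set that meets $L_1$). The problem is the final degeneration step, which is where all the content lies, and your proposed mechanism does not work.

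Bend-and-break does not break a rational curve when only \emph{one} point is fixed: a complete positive-dimensional family of maps $\PP^1\to Y$ through a single fixed point need not acquire a reducible or non-reduced member (lines through a point of $\PP^n$ already show this). The breaking lemma requires a complete positive-dimensional family through \emph{two} fixed points, and for conics on $\Fa(X_d)$ this is dimensionally impossible in the stated range: the expected dimension of deformations of a conic fixing two points is
\[
-K_{\Fa(X_d)}\cdot 2[l]-\dim \Fa(X_d)=2\Bigl(n+1-\binom{d+1}{2}\Bigr)-(2n-3-d)=d+5-d(d+1),
\]
which is negative for every $d\geq 3$; through two general points of $\Fa(X_d)$ there are simply no conics. (This is precisely why the paper's own bend-and-break argument in Proposition \ref{conn} is run only for degree $b\geq 3$ and the conic case is quarantined into Section 6.) So your component of smooth conics cannot be forced to the boundary this way, and even in the reducible case the step "let $l_2$ degenerate to $l_1$" presupposes connectedness of the family of lines of $\Fa(X_d)$ through the node, i.e.\ Lemma \ref{line}, which is only established under the much stronger numerical hypotheses of Theorem \ref{propgriffiths}. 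The paper's route is genuinely different: since every component has dimension at least $3n-6$, \cite[Proposition 11.6]{Eric} produces in each component a conic $C$ whose swept-out quadric $Q_C\subseteq\PP^n$ is singular, so that $C$ lies in the cone of lines $Y=\Fa_p$ through the vertex $p$; then $Y$ is a low-degree complete intersection, $\M_{0,0}(Y,2)$ is connected by \cite[Theorem 1.3]{Zongrunpu} and contains double lines because $Y$ contains lines, and chaining these connected sets through the irreducible $L_1$ gives connectedness. Note also that this only requires each component to meet some $\M_{0,0}(\Fa_p,2)$, a weaker target than your requirement that each component contain a point of $L_1$ itself.
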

\begin{proof}
We use degeneration argument. We can assume that $X_d$ is general. By deformation theory, we know every irreducible component of $\M_{0,0}(\Fa(X_d),2)$ is of dimension at least (expected dimension=)
\[\begin{aligned}
&-K_{\Fa(X_d)}\cdot 2[line]+\dim(\Fa(X_d))-3\\
&=2 (n+1-\binom {d+1}2)+2n-3-d-3\\
&\geq 4n-\binom{d+1}2-d-5
\end{aligned}\]
where 
 \[-K_{\Fa(X_d)}=\OO_{\Fa(X_d)}(n+1-\binom {d+1}2)\] and $\dim(\Fa(X_d))=2n-3-d$ (see \cite[Chapter V Theorem 4.3 and Exercise 4.7]{K}). By the assumption, we conclude that every irreducible component of $\M_{0,0}(\Fa(X_d),2)$ has dimension at least $3n-6$. We apply \cite[Proposition 11.6]{Eric} to conclude that every component contains a point parametrizing a conic $C$ in $\Fa(X_d)$ such that the associated quadric $Q_C$ in $\PP^n$ is singular. Therefore, a such conic $C$ is in a subspace $Y$ of $\Fa(X_d)$ parametrizing the lines through a point. Recall that $Y$ is defined by the equations in $\PP^n$ of type
\[(1,1,2,3,\ldots,d-1,d).\] If\[1+1+2+3+\ldots+(d-1)+d +\frac{d+1}{2}\leq \frac{3n-2}{2}\] (i.e., $\frac{(d+1)^2+4}{3}\leq n$), then the subspace $\M_{0,0}(Y,2)$ of $\M_{0,0}(\Fa(X_d),2)$ is connected by \cite[Theorem 1.3]{Zongrunpu} and intersects the special locus $L_1$ by the fact that $Y$ contains a line, see \cite[Chapter V, Exercise 4.10.5]{K}. It implies the connectedness of $\M_{0,0}(\Fa(X_d),2)$.

\end{proof}

\begin{lemm} \label{lemmlc}
Every irreducible component of space $\M_{0,0}(\Fa(X_d),2)$ is of dimension \[D=\dim~ \M_{0,0}(G(2,n+1),2)-(d+1)^2.\] The space $\M_{0,0}(\Fa(X_d),2)$ is a local complete intersection in $\M_{0,0}(G(2,n+1),2)$, therefore, it is $(D-1)$-connected, see \cite[Chapter 3 3.3.C ]{L}.
\end{lemm}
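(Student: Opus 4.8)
The plan is to realize $Z:=\M_{0,0}(\Fa(X_d),2)$ as the zero scheme of a section of a vector bundle of rank $(d+1)^2$ on the smooth ambient stack $M:=\M_{0,0}(G(2,n+1),2)$, so that the first assertion becomes a dimension count, the local complete intersection property is automatic, and the $(D-1)$-connectedness follows from the cited theorem. Recall that $M$ is smooth and irreducible of dimension $4n-3$ by \cite{FP} and \cite{Kim} (using $-K_{G(2,n+1)}=\OO_{G(2,n+1)}(n+1)$), so that $D=(4n-3)-(d+1)^2$. Over $M$ I would take the universal stable map $\Phi:\mathcal{C}\to G(2,n+1)$ from the universal domain $\mathcal{C}$, pull back the tautological rank-two subbundle $S\subseteq\OO^{n+1}$ of the Grassmannian to $\mathcal{S}:=\Phi^{*}S$ on $\mathcal{C}$, and form the ruled surface $q:\PP(\mathcal{S})\to M$ with its tautological morphism $\Psi:\PP(\mathcal{S})\to\PP^n$, whose image over $[C]\in M$ is the quadric $Q_C$ swept out by the lines of $C$. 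Setting $\mathcal{E}:=q_{*}\Psi^{*}\OO_{\PP^n}(d)$ and letting $s_F\in\Ho^0(M,\mathcal{E})$ be the section induced by the defining equation $F$ of $X_d$, one has $s_F([C])=0$ exactly when $F$ vanishes on every line of $C$, i.e. when $C\subseteq\Fa(X_d)$; thus $Z=Z(s_F)$ as closed substacks.

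The step I expect to be the main obstacle is proving that $\mathcal{E}$ is locally free of rank exactly $(d+1)^2$ at every point of $M$, in particular over the special locus $L$ of double lines, where the image $Q_C$ degenerates to a $2$-plane and $\Ho^0(Q_C,\OO_{Q_C}(d))$ jumps. The resolution is to compute cohomology on the abstract ruled surface $\PP(\mathcal{S})$ rather than on the image: the fibre of $\mathcal{E}$ over $[C]$ is $\Ho^0$ of $\Sym^d(\mathcal{S}^{\vee}|_C)$ on the arithmetic genus-zero domain. Since $\mathcal{S}^{\vee}$ is globally generated (being pulled back from the globally generated quotient $S^{\vee}$) of relative Pl\"ucker degree $2$, the bundle $\Sym^d(\mathcal{S}^{\vee}|_C)$ is globally generated of degree $d(d+1)$, so $h^1=0$ and $\chi=d(d+1)+(d+1)=(d+1)^2$ on every fibre, including the nodal and double-line ones. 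Cohomology and base change then force $\mathcal{E}$ to be locally free of rank $(d+1)^2$. The mechanism is that, although $\Psi$ is two-to-one over the $2$-plane for a double line, the computation on $\PP(\mathcal{S})$ records the doubled sections and keeps the rank constant.

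Granting this, the first assertion follows by a standard count. As $Z=Z(s_F)$ is cut out by a section of a rank $(d+1)^2$ bundle on the smooth stack $M$ of dimension $4n-3$, Krull's principal ideal theorem gives that every irreducible component of $Z$ has dimension at least $D$. For the opposite bound I would use the incidence correspondence $I$ of (\ref{picstar}): by Lemma \ref{lemmairr} it is irreducible, and computing the dimension of the general fibre of $pr_1$ (the linear system of hypersurfaces through a smooth quadric $Q_C$, of codimension $(d+1)^2$) together with the surjectivity of $pr_2$ noted after (\ref{picstar}) shows that for $X_d$ general the fibre $Z=pr_2^{-1}([X_d])$ has dimension exactly $D$. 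Hence every component of $Z$ has dimension $D$, so $\mathrm{codim}_M Z=(d+1)^2=\mathrm{rank}\,\mathcal{E}$ and $Z$ is a local complete intersection in $M$. Finally, the $(D-1)$-connectedness is then a formal consequence of the local complete intersection property by \cite[Chapter 3, 3.3.C]{L}.
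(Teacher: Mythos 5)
Your proposal is correct and follows essentially the same route as the paper: realize $\M_{0,0}(\Fa(X_d),2)$ as the zero locus of a section of the rank-$(d+1)^2$ bundle obtained by pushing forward $\Sym^d(S^{\vee})$ along the universal curve, establish local freeness by vanishing of $\Ho^1$ on every fibre, get the dimension from the irreducible incidence correspondence $I$ via $pr_2$, and deduce the local complete intersection property and $(D-1)$-connectedness. The only (harmless) difference is that you compute the constant rank by Riemann--Roch plus global generation on the genus-zero domain, where the paper computes $h^0$ explicitly from the splitting $\varphi^{*}S^{\vee}=\OO_{\PP^1}(1)\oplus\OO_{\PP^1}$ on a double line and checks $\Ho^1=0$ case by case.
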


\begin{proof}
We claim $\M_{0,0}(\Fa(X_d),2)$ is the zero locus of a section of a vector bundle on $\M_{0,0}(G(2,n+1),2)$. In fact, we have the following universal bundles over $G(2,n+1)=G$ and $\M_{0,0}(G(2,n+1),2)$ 
\[\xymatrix{ \mathcal{L}=\PP(S)\ar[d]^{\pi}\ar[r]^f &\PP^n & \mathcal{C}\ar[r]^g\ar[d]^{\hat{\pi}}& G(2,n+1)\\
G(2,n+1) & & \M_{0,0}(G,2) & &}\]where $S$ is the universal subbundle over $G(2,n+1)$. Suppose that the section $s\in \Ho^0(\PP^n,\OO_{\PP^n}(d))$ defines the hypersurface $X_d\subseteq \PP^n$. We have 
\[\pi_*f^*\OO_{\PP^n}(d)=\pi_*\OO_{\PP(S)}(d)=\Sym^d(S^{\vee})\]
and a section $s_1=\pi_*f^*(s)\in \Ho^0(G(2,n+1),\Sym^d(S^{\vee}))$. The zero locus of $s_1$ defines the Fano variety $\Fa(X_d)$ of lines in $X_d$. \\
We claim that \[\hat{\pi}_*g^*\Sym^d(S^{\vee})=E\]is a vector bundle on $\M_{0,0}(G(2,n+1),2)$. Moreover, it has a section $s_2=\hat{\pi}_*g^*(s_1)$. In fact, by the base change theorem, it suffices to prove that the cohomology group 
\begin{center}
$\Ho^1(C,h^*(\Sym^dS^{\vee}))$ is zero 
\end{center} for any conic $h:C\rightarrow G=G(2,n+1)$. We divide it into two cases:
\begin{itemize}
\item 
If $h:\PP^1 \rightarrow G$ is a smooth conic or a double line, then $h^*(\Sym^d(S^{\vee}))$ is semi-ample since there is quiotient $\OO_G^{\oplus {n+1}} \rightarrow  S^{\vee}.$ In particular, we have $\Ho^1(C,h^*(\Sym^d(S^{\vee}))=0$.
\item
If the image $h:C=\PP^1\vee \PP^1\rightarrow G$ is a reducible conic or a double line, then, as above, we conclude $\Ho^1(C,h^*(\Sym^dS^{\vee}))=0$ by using the short exact sequence 
\[0\rightarrow \OO_{\PP^1}(-1)\rightarrow \OO_C\rightarrow \OO_{\PP^1}\rightarrow 0.\]
\end{itemize}

It is clear that the zero locus of $s_2$ defines $\M_{0,0}(\Fa(X_d),2)$.
 As in the proof of Lemma \ref{lemmsm}, we know that \[pr_2^{-1}([X_d])=\M_{0,0}(\Fa(X_d),2),\]cf. (\ref{picstar}). Note that $I$ is irreducible by Lemma \ref{lemmairr} and $L_1$ is not an irreducible component of $\M_{0,0}(\Fa(X_d),2)$ for $X_d$ general by Lemma \ref{lemmnotirr}. The dimension of each irreducible component of $\M_{0,0}(\Fa(X_d),2)$ for $X_d$ general is \[\begin{aligned}
  &\dim(I)-\dim( \PP \Ho^0(\PP^n,\OO_{\PP^n}(d)))\\
  &=\dim (\M_{0,0}(G(2,n+1),2))+ \dim (pr_1^{-1}(C))-\dim( \PP\Ho^0(\PP^n,\OO_{\PP^n}(d)))\\
&=\dim (\M_{0,0}(G(2,n+1),2))-h^0(Q_C,\OO_{Q_C}(d))\\
&=\dim (\M_{0,0}(G(2,n+1),2))-(d+1)^2
\end{aligned} \]
where $[C]$ is a general point of $\M_{0,0}(G(2,n+1),2)$. Hence, the fiber $pr_1^{-1}([C])$, cf. (\ref{picstar}), is the following projective space by (\ref{quadric})
\[\PP\left(\mathrm{Ker}:\Ho^0(\PP^n,\OO_{\PP^n}(d))\rightarrow \Ho^0(Q_C,\OO_{Q_C}(d))\right).\]
We show the first statment of the lemma. 

To prove the space $\M_{0,0}(\Fa(X_d),2)$ is a local complete intersection in \[\M_{0,0}(G(2,n+1),2),\] it suffices to verify the rank of $E$ is $(d+1)^2$ since $\M_{0,0}(\Fa(X_d),2)$ is the zero locus of a section of $E$. It is clear that the rank of $E$ is $h^0(C,q^*(\Sym^dS^{\vee}))$ for any conic $q:C\rightarrow G$. So we can consider the case when $q:C\rightarrow G$ is a double line. Since $\varphi^*(S^{\vee})=\OO_{\PP^1}(1)\oplus \OO_{\PP^1}$ for any line $\varphi:\PP^1 \rightarrow G$, we conclude that \[\begin{aligned}
&\varphi^*\Sym^dS^{\vee}\\
&=\Sym^d(\OO_{\PP^1}(1)\oplus \OO_{\PP^1})\\
&=\sum\limits^{d}_{s=0} \OO_{\PP^1}^{\otimes s}\otimes \OO_{\PP^1}(1)^{\otimes (d-s)}=\sum\limits^{d}_{s=0} \OO_{\PP^1}(s).\end{aligned} \] For the double line $q$ induced by $\varphi$, i.e., 
\[\xymatrix{q:\PP^1\ar[r]^{\psi}_{2:1}& \PP^1 \ar[r]^{\varphi} &G},\] we have \[\begin{aligned}
& h^0(\PP^1,q^*\Sym^dS^{\vee})\\
&=h^0(\PP^1,\psi^*(\varphi^*\Sym^dS^{\vee}))\\
&=\sum\limits^d_{s=0}h^0(\PP^1,\OO_{\PP^1}(2s))=\sum\limits_{s=0}^d(2s+1)=(d+1)^2.
\end{aligned}\] We have proved the lemma.
\end{proof}

\begin{cor}\label{corconnect}
The space $\M_{0,0}(\Fa(X_d),2)-L_1$ is connected.
\end{cor}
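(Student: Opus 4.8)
The plan is to combine the structural facts established just above about $\M_{0,0}(\Fa(X_d),2)$ into the statement about the open substack obtained by removing the special locus $L_1$. We have three relevant inputs: by Lemma \ref{lemmcon} the full space $\M_{0,0}(\Fa(X_d),2)$ is connected (under the standing hypothesis, which forces the numerical bound in that lemma); by Lemma \ref{lemmlc} the space is a local complete intersection in $\M_{0,0}(G(2,n+1),2)$ with every irreducible component of the same dimension $D$, and moreover it is $(D-1)$-connected; and by Lemma \ref{lemmnotirr} the special locus $L_1$ is \emph{not} an irreducible component of $\M_{0,0}(\Fa(X_d),2)$ for $X_d$ general.

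The key point I would exploit is that $L_1$ has too small a dimension to disconnect the space when removed. First I would record that, since $L_1$ is irreducible (Proposition \ref{propirr}) and not a component, it is a proper closed substack contained in the intersection of at least two components or in the interior of one, and in either case $\dim L_1 < D$; removing a closed subset of strictly smaller dimension from a connected space of pure dimension $D$ cannot increase the number of connected components. More carefully, since $\M_{0,0}(\Fa(X_d),2)$ is a local complete intersection it is Cohen--Macaulay, hence pure of dimension $D$ and connected in codimension one (by the $(D-1)$-connectedness from Lemma \ref{lemmlc}); deleting the closed subset $L_1$, which has codimension at least one, from a space that is connected in codimension one leaves a connected space. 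Thus I would pass from the connectedness of $\M_{0,0}(\Fa(X_d),2)$ to that of $\M_{0,0}(\Fa(X_d),2)-L_1$.

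The main obstacle, and the step requiring genuine care rather than formal manipulation, is justifying that deleting $L_1$ does not disconnect: it is not enough for $L_1$ merely to be a proper closed subset, because removing a codimension-one locus can separate components that meet only along it. The clean way around this is precisely the $(D-1)$-connectedness in Lemma \ref{lemmlc}, which says any two points can be joined by a chain of irreducible components meeting in codimension one; since $L_1$ is irreducible of codimension at least one and is not itself a component, the generic points of all these components and of their pairwise codimension-one intersections survive in the complement, so the chain persists after removing $L_1$. I would therefore phrase the argument in terms of the component graph of $\M_{0,0}(\Fa(X_d),2)$ being connected (from Lemma \ref{lemmlc}) and observe that no edge of that graph is destroyed by excising the lower-dimensional $L_1$.

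Finally I would note that this corollary is exactly the statement one needs to feed into the connectedness results for moduli of conics via the smooth open locus $\M_{0,0}(\Fa(X_d),2)-L_1$ of Lemma \ref{lemmsm}, so the conclusion should be stated for $X_d$ general, matching the hypotheses under which Lemmas \ref{lemmcon}, \ref{lemmlc} and \ref{lemmnotirr} hold.
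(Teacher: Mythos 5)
Your overall strategy is the paper's: deduce the claim from the $(D-1)$-connectedness of $\M_{0,0}(\Fa(X_d),2)$ established in Lemma \ref{lemmlc}. But there is a genuine gap at the decisive step. The notion of $(D-1)$-connectedness cited from \cite[3.3.C]{L} guarantees that the space stays connected after removing a closed subset of dimension $<D-1$, i.e.\ of \emph{codimension at least two}. The paper's proof therefore rests on the explicit fact that $\mathrm{codim}(L_1)\geq 2$, which follows from a dimension count ($\dim L_1=3n-\binom{d+1}{2}-d$ from Section 6.1 versus the pure dimension $D=\dim\M_{0,0}(G(2,n+1),2)-(d+1)^2$ from Lemma \ref{lemmlc}, giving codimension $n-\binom{d+1}{2}-4$, which is at least $2$ under the hypotheses in force). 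You never establish this; everywhere you only use that $L_1$ is not an irreducible component, i.e.\ that its codimension is at least one.

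Your attempt to bridge that gap does not work. The claim that ``deleting a closed subset of codimension at least one from a space that is connected in codimension one leaves a connected space'' is false: two $2$-planes in $\PP^3$ meeting along a line form a space connected in codimension one, yet removing that line disconnects it. You correctly flag this danger yourself, but your repair --- that the generic points of the pairwise codimension-one intersections of components ``survive'' the removal of $L_1$ --- is exactly what fails when $L_1$ has codimension one: nothing prevents the irreducible $L_1$ from being (or containing) a codimension-one component of the intersection of two irreducible components, since ``not a component of the whole space'' does not preclude that. So the argument needs the quantitative input $\mathrm{codim}(L_1)\geq 2$, and once you have it, the appeal to Lemma \ref{lemmcon} and the chain-of-components discussion become unnecessary: $(D-1)$-connectedness applied to $Z=L_1$ gives the corollary in one line, which is what the paper does.
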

\begin{proof}
It follows from Lemma \ref{lemmlc} and the fact that the codimension of $L_1$ in \[\M_{0,0}(\Fa(X_d),2)\] is at least 2.
\end{proof}
\begin{prop}\label{propconnirr}
The space \[\M_{0,m}(\Fa(X_d),2)\] is irreducible for $X_d$ general.
\end{prop}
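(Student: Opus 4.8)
The plan is to first settle the unmarked case $m=0$ using the three preceding results, and then bootstrap to arbitrary $m$ through the forgetful (universal curve) morphisms. For $m=0$, I would combine Lemma \ref{lemmsm}, Corollary \ref{corconnect}, and Lemma \ref{lemmnotirr}. By Lemma \ref{lemmsm} the open substack $\M_{0,0}(\Fa(X_d),2)-L_1$ is smooth, and by Corollary \ref{corconnect} it is connected; since a smooth connected scheme (stack) is irreducible, $\M_{0,0}(\Fa(X_d),2)-L_1$ is irreducible. By Lemma \ref{lemmnotirr} the special locus $L_1$ is not an irreducible component of $\M_{0,0}(\Fa(X_d),2)$, so $L_1$ lies in the closure of its complement. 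Hence $\M_{0,0}(\Fa(X_d),2)$ is the closure of the irreducible dense open subset $\M_{0,0}(\Fa(X_d),2)-L_1$, and is therefore irreducible.

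For general $m$, I would induct on $m$ via the forgetful morphism $\pi\colon \M_{0,m+1}(\Fa(X_d),2)\rightarrow \M_{0,m}(\Fa(X_d),2)$, which is the universal curve over $\M_{0,m}(\Fa(X_d),2)$. Assume $\M_{0,m}(\Fa(X_d),2)$ is irreducible. Let $V\subseteq \M_{0,m}(\Fa(X_d),2)$ be the locus of stable maps whose domain is an irreducible $\PP^1$ (this includes the smooth conics and the double lines). Then $V$ is open, and it is nonempty since $\M_{0,0}(\Fa(X_d),2)$ contains smooth conics; its complement is the boundary divisor of maps with reducible domain, which is proper closed in the irreducible space $\M_{0,m}(\Fa(X_d),2)$, so $V$ is dense. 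Over $V$ the universal curve $\pi^{-1}(V)\rightarrow V$ is a smooth morphism all of whose fibers are $\PP^1$, hence $\pi^{-1}(V)$ is irreducible of dimension $\dim \M_{0,m}(\Fa(X_d),2)+1=\dim \M_{0,m+1}(\Fa(X_d),2)$. The remaining locus $\pi^{-1}(\M_{0,m}(\Fa(X_d),2)-V)$ has dimension at most $\dim\bigl(\M_{0,m}(\Fa(X_d),2)-V\bigr)+1<\dim \M_{0,m+1}(\Fa(X_d),2)$, so $\pi^{-1}(V)$ is dense. An irreducible dense open subset forces $\M_{0,m+1}(\Fa(X_d),2)$ to be irreducible, completing the induction.

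The substantive work is already carried out in Lemmas \ref{lemmsm}, \ref{lemmnotirr}, and \ref{lemmlc} together with Corollary \ref{corconnect}, so the only genuinely remaining subtlety is the bookkeeping across the forgetful map: one must confirm that the general domain is irreducible (so that the universal curve is generically a $\PP^1$-fibration with irreducible fibers) and that the reducible-domain boundary, as well as the double-line locus $L_1$, has strictly smaller dimension, ensuring that the preimage of the irreducible-domain locus stays dense. These facts follow from Lemma \ref{lemmnotirr} (which keeps $L_1$ off the top-dimensional stratum) and the dimension and local-complete-intersection statements of Lemma \ref{lemmlc}; the dimension comparison under $\pi$ is the one step that requires care, but it is routine once irreducibility of $\M_{0,m}(\Fa(X_d),2)$ is in hand.
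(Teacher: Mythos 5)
Your proposal is correct and follows essentially the same route as the paper: the $m=0$ case via Lemma \ref{lemmsm}, Corollary \ref{corconnect}, and Lemma \ref{lemmnotirr}, followed by induction on $m$ using that the generic fibers of the forgetful map are irreducible curves. Your write-up merely supplies more detail on the forgetful-map step than the paper's one-line remark.
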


\begin{proof}
Note that the generic fibers of \[\M_{0,m}(\Fa(X_d),2)\rightarrow \M_{0,m-1}(\Fa(X_d),2)\] are irreducible curves. It suffices to prove the case $m=0$ by the induction on $m$. It follows from Lemma \ref{lemmsm} and Corollary \ref{corconnect} that the space $\M_{0,0}(\Fa(X_d),2)-L_1$ is irreducible for $X_d$ general. Since $L_1$ is not an irreducible component of $\M_{0,0}(\Fa(X_d),2)$ for $X_d$ general by Lemma \ref{lemmnotirr}, we conclude that $\M_{0,0}(\Fa(X_d),2)$ is irreducible for $X_d$ general.
\end{proof}

\section{Bend and Break, Curves on Fano Variety of Lines}

In this section, we assume that $X_d$ is a smooth hypersurface of degree $d$ in $\PP^n$.
\begin{theorem}\label{propgriffiths}Suppose that one of the following assumptions holds:
\begin{itemize}
\item  $d\geq 4$ and \[n\geq 3\binom {d+1}{2}-d-4,\]
\item $d=3$ and $n\geq 14$.
\end{itemize}If the Fano variety $\Fa(X_d)$ is smooth, then we have
\[\CH_1(\Fa(X_d))_{hom}=\CH_1(\Fa(X_d))_{alg}\] i.e., $\mathrm{Griff_1}(\Fa(X_d))=0$. In particular, $\CH_1(\Fa(X_d))_{hom}$ is divisible.
\end{theorem}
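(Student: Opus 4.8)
The plan is to reduce the statement to a connectedness property of the moduli spaces of rational curves on $\Fa(X_d)$ and then to propagate algebraic equivalence along the resulting connected families. First I would record that the numerical hypotheses force $d(d+1)\leq n-2$: for $d\geq 4$ the gap $n-2-d(d+1)$ is bounded below by $\tfrac{(d-4)(d+3)}{2}\geq 0$, and for $d=3$ the condition $n\geq 14$ is exactly $d(d+1)\leq n-2$. Hence Proposition \ref{prophomology} applies and $\Ho_2(\Fa(X_d),\mathbb{Z})=\mathbb{Z}\cdot[l]$ is generated by the class of a Pl\"ucker line $l$. Consequently a $1$-cycle on $\Fa(X_d)$ is homologically trivial exactly when it has degree $0$ with respect to the Pl\"ucker polarization, and it suffices to prove that any effective $1$-cycle $Z$ is algebraically equivalent to $(\deg Z)\,[l]$; applying this to $Z_+$ and $Z_-$ in a decomposition $Z=Z_+-Z_-$ of a homologically trivial cycle into effective pieces of equal degree then yields $Z\sim_{alg}0$.

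Second, I would establish that the evaluation fiber $ev_b^{-1}(p)$ of $ev_b\colon \M_{0,1}(\Fa(X_d),b)\to \Fa(X_d)$ is nonempty and connected for every $b\geq 1$ and general $p$. The base cases $b=1,2$ are precisely the hypotheses (\ref{hypothesis}), which are verified for $\Fa(X_d)$ later in this section. To pass to arbitrary $b$, I would degenerate an arbitrary degree-$b$ stable map into a union of lines (Lemma \ref{starlemm}), connect the resulting dual graphs of good trees by the conic deformations of Lemma \ref{conicdeform}, and combine the fiberwise connectedness over each configuration (Lemma \ref{lemmone}) with Proposition \ref{propconnectdualgraph} to place all the relevant evaluation fibers in a single connected component of $ev_b^{-1}(p)$. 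It follows that $\M_{0,1}(\Fa(X_d),b)$, and hence $\M_{0,0}(\Fa(X_d),b)$, is connected, so that all degree-$b$ rational curves lie in one algebraic family and are algebraically equivalent as cycles; using the irreducibility of $\M_{0,0}(\Fa(X_d),1)$ (Proposition \ref{propirr}) every line is algebraically equivalent to $[l]$, so a chain of $b$ lines, and therefore every degree-$b$ rational curve, satisfies the class $\sim_{alg} b[l]$.

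Third — and this is the heart of the matter — I would reduce an arbitrary effective $1$-cycle to a cycle supported on rational curves. Since $\Fa(X_d)$ is covered by free lines, with the tangent bundle positive along a general line (the computation underlying Lemma \ref{lemmnotirr}), for any irreducible curve $B\subseteq \Fa(X_d)$ I would attach sufficiently many free rational curves at general points of $B$, smooth the resulting comb, and then apply bend-and-break to the smoothing to split off rational components. Because algebraic equivalence of $1$-cycles is insensitive to genus, the aim is to exhibit $B$ inside a flat family of $1$-cycles whose special member is a union of rational curves; the attached free teeth raise the degree and supply the positivity needed for the bend-and-break degeneration, and their classes, being rational, are themselves multiples of $[l]$ and can be subtracted off. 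Tracking degrees then gives $[B]\sim_{alg}(\deg B)[l]$, and summing over the components of $Z$ finishes the reduction and hence the proof that $\mathrm{Griff}_1(\Fa(X_d))=0$. The main obstacle I anticipate is exactly this smoothing-and-degeneration step: one must guarantee enough freeness for the comb to smooth and for bend-and-break to break off rational curves while retaining precise control of the homology class, so that no spurious non-rational contribution survives.

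Finally, the divisibility of $\CH_1(\Fa(X_d))_{hom}$ is formal once $\mathrm{Griff}_1(\Fa(X_d))=0$, for then $\CH_1(\Fa(X_d))_{hom}=\CH_1(\Fa(X_d))_{alg}$. Every element of the latter has the form $\Gamma_*(c_1-c_0)$ for a correspondence $\Gamma$ on $C\times \Fa(X_d)$ and a degree-zero divisor $c_1-c_0$ on a smooth projective curve $C$; since $c_1-c_0$ is infinitely divisible in $\mathrm{Jac}(C)=\CH_0(C)_{alg}$ and $\Gamma_*$ is a homomorphism, its image is divisible in $\CH_1(\Fa(X_d))_{alg}$, which gives the claim.
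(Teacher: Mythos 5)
Your overall architecture coincides with the paper's: both arguments rest on (i) the computation $\Ho_2(\Fa(X_d),\mathbb{Z})=\mathbb{Z}\cdot[l]$ of Proposition \ref{prophomology} (and your verification that the numerical hypotheses imply $d(d+1)\leq n-2$ is correct), (ii) the connectedness of $\M_{0,0}(\Fa(X_d),b)$ for all $b$, which is exactly Proposition \ref{conn} and is proved there by the same chain you describe --- Lemma \ref{starlemm}, Lemma \ref{conicdeform}, Lemma \ref{lemmone}, Proposition \ref{propconnectdualgraph} --- and (iii) the reduction of an arbitrary homologically trivial $1$-cycle to rational curves. The one genuine divergence is in step (iii): the paper disposes of it in one line by citing Tian--Zong \cite[Theorem 1.3]{TZ}, which states that $\CH_1$ of the Fano variety $\Fa(X_d)$ (Fano under the stated numerical hypotheses) is generated by rational curves, whereas you propose to reprove this from scratch by attaching free rational teeth to an arbitrary irreducible curve $B$, smoothing the comb, and applying bend-and-break. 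Your closing divisibility argument via correspondences from Jacobians is the standard one and matches what the paper uses implicitly.

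The soft spot is precisely the one you flag. Smoothing a comb with handle $B$ produces a curve of the same geometric genus as $B$, and bend-and-break in the form invoked elsewhere in this paper (fixing two points and breaking off rational pieces) applies to \emph{rational} curves; it does not by itself degenerate a positive-genus curve into a union of rational curves. What Tian--Zong actually do is more delicate: after attaching sufficiently many very free teeth they produce a family of $1$-cycles in a fixed class whose special member splits off the handle with controlled multiplicities, so that $[B]$ plus a sum of rational classes is algebraically equivalent to another sum of rational classes, and the rational contributions can then be cancelled. As written, your sketch asserts the conclusion of that argument rather than supplying it, so you should either carry out that degeneration in detail or, as the paper does, simply invoke \cite[Theorem 1.3]{TZ}; with that citation in place your proof is the paper's proof.
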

\begin{proof}
Note that $\Fa(X_d)$ is Fano under the assumption of the theorem. It follows from \cite[Theorem 1.3]{TZ} that the first Chow group $\CH_1(\Fa(X_d))$ is generated by rational curves. Suppose that 1-cycle  $C$ is a rational curve. It is homologus to $b\cdot[line]$, see Proposition \ref{prophomology}. The cycle $C$ is algebraically equivalent to the sum of some lines in $\Fa(X_d)$ since $\M_{0,0}(\Fa(X_d),b)$ is connected by Proposition \ref{conn}. We have proved the theorem.
\end{proof}

In the rest of this section, we will show Proposition \ref{conn}. In the following, we assume that the hypersurface $X_d$ is general and one of the assumptions of Theorem \ref{propgriffiths} holds.

\begin{lemm}\label{line}
The fibers of the evalution map \[ev:\M_{0,1}(\Fa(X_d),1)\rightarrow \Fa(X_d)\] are connected and nonempty.
\end{lemm}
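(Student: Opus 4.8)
The plan is to identify the fiber $ev^{-1}(x)$ over a point $x\in\Fa(X_d)$, parametrizing a line $L_x\subseteq X_d$, with a space already analyzed in Section 2, and then to invoke the connectedness of complete intersections. First I would note that a degree-one stable map with one marked point into $\Fa(X_d)$ is an isomorphism onto a line $\ell\subseteq\Fa(X_d)$ carrying a marked point; the constraint that the marked point evaluate to $x$ forces $x\in\ell$ and pins down the marked point. Hence $ev^{-1}(x)$ is exactly the space of lines of $\Fa(X_d)\subseteq G(2,n+1)$ through $[L_x]$. Recalling that a line of the Grassmannian $G(2,n+1)$ is a pencil $\{\ell'\mid q\in\ell'\subseteq\Pi\}$ attached to a flag $(q,\Pi)$ with $q$ a point and $\Pi$ a $2$-plane through $q$, such a pencil passes through $[L_x]$ precisely when $q\in L_x$ and $L_x\subseteq\Pi$, and it lies inside $\Fa(X_d)$ precisely when the lines it sweeps out, namely all of $\Pi$, lie in $X_d$, i.e.\ $\Pi\subseteq X_d$.

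Since the conditions $q\in L_x$ and $L_x\subseteq\Pi\subseteq X_d$ constrain the vertex $q$ and the plane $\Pi$ independently, this exhibits $ev^{-1}(x)$ as a bundle over $L_x\cong\PP^1$ (the choice of vertex $q$) whose fiber is the space of $2$-planes of $X_d$ containing $L_x$. But this fiber is exactly the space $\Fano_x(\Fa_Q)=h^{-1}(x)$ studied in Section 2 (with $Q$ any fixed point of $L_x$); in fact $ev^{-1}(x)\cong\PP^1\times h^{-1}(x)$. By the computation preceding Lemma \ref{lineinplane}, $h^{-1}(x)$ is a complete intersection in a projective space cut out by equations of the type (\ref{typeeq}). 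Under either hypothesis of Theorem \ref{propgriffiths} the number of these equations is much smaller than the dimension of the ambient projective space, so $h^{-1}(x)$ is a complete intersection of dimension at least one; in particular it is nonempty, which yields the nonemptiness of $ev^{-1}(x)$.

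It then remains to show $h^{-1}(x)$ is connected. For this I would use the standard fact that a complete intersection of positive dimension in a projective space is connected, proved by cutting the ambient projective space down by the defining hypersurfaces one at a time and using that an ample divisor in a connected projective variety of dimension at least two is again connected. Thus $h^{-1}(x)$ is connected, and therefore so is $ev^{-1}(x)\cong\PP^1\times h^{-1}(x)$, completing both assertions. The geometric identification of the fiber and the connectedness of complete intersections are formal; the only substantive input is the numerical check that the codimension coming from (\ref{typeeq}) leaves dimension at least one, and this is the single place where the strength of the hypothesis of Theorem \ref{propgriffiths} actually enters.
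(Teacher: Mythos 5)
Your proposal is correct and follows essentially the same route as the paper: identify $ev^{-1}(x)$ with the space of $2$-planes of $X_d$ containing $L_x$ (the paper suppresses the harmless $\PP^1$-factor recording the vertex $q$, which you make explicit), appeal to the description of that space as an intersection of hypersurfaces of type (\ref{typeeq}) from Section 2, and conclude nonemptiness and connectedness from the positivity of its dimension under the numerical hypothesis. The only difference is cosmetic care in the flag-variety identification; the substantive inputs are identical.
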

\begin{proof}
The fiber $ev^{-1}([l])$ over $[l]\in \Fa(X_d)$ parametrizes 2-planes in $X_d$ containing the line $l$. By the same arugment as in section 2 (after Proposition \ref{rationaleqruledsurf}), we know the fiber $ev^{-1}([l])$ is the intersection of ample divisors in a projective spaces and of positive dimension, therefore, it is connected and nonempty. 
\end{proof}

\begin{lemm} \label{connectedfibers}
Suppose that the map $f:X\rightarrow Y$ is proper from an irreducible reduced Deligne-Mumford stack $X$ to an irreducible variety $Y$ over complex numbers $\mathbb{C}$. Let $Z$ be a closed Deligne-Mumford substack of $X$. If the fibers of $f|_Z:Z\rightarrow Y$ are nonempty and connected, then the fibers of $f$ are connected.
\end{lemm}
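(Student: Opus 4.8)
The plan is to apply Stein factorization to the proper morphism $f$. Since connectedness of fibers is a purely topological condition, I would first reduce to coarse moduli spaces, so that we may assume $X$ and $Y$ are varieties (with $Y$ already one) and $f$ is a proper morphism of schemes; the hypotheses on $Z$ and on the fibers of $f|_Z$ descend to the coarse spaces. Writing $f=g\circ h$ with $h\colon X\to Y'$ satisfying $h_*\OO_X=\OO_{Y'}$ and $g\colon Y'\to Y$ finite, the map $h$ has nonempty connected fibers over every point of $Y'$. Because $X$ is irreducible and $h$ is surjective, $Y'$ is irreducible; and since $f|_Z$ is surjective (its fibers being nonempty), $f$ and hence $g$ are surjective. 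The piece of the Stein factorization I want to use is the standard bijection, for each $y\in Y$, between the connected components of $f^{-1}(y)$ and the points of the finite set $g^{-1}(y)$, realized by $q\mapsto h^{-1}(q)$; concretely $f^{-1}(y)=\coprod_{q\in g^{-1}(y)}h^{-1}(q)$ with each $h^{-1}(q)$ connected, nonempty, and both open and closed in $f^{-1}(y)$.

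The key step is to feed the hypothesis on $Z$ into this picture. For each $y\in Y$ the fiber $Z_y=(f|_Z)^{-1}(y)$ is connected and nonempty, so its image $h(Z_y)$ is a connected subset of the finite discrete set $g^{-1}(y)$, hence a single point $p(y)$. Set $W=h(Z)$, which is closed in $Y'$ because $h$ is proper and $Z$ is closed. Then $g|_W\colon W\to Y$ is surjective, since every $Z_y$ is nonempty, and it is injective on points: any $w\in W$ satisfies $w=p(g(w))$, so $w\mapsto g(w)$ and $y\mapsto p(y)$ are mutually inverse. Thus $g|_W$ is a bijection on points, which forces $\dim W=\dim Y=\dim Y'$. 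As $Y'$ is irreducible, a closed subset of full dimension must contain the unique top-dimensional component and hence equal $Y'$, so $W=Y'$.

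Consequently $g=g|_W$ is bijective on points, i.e.\ $g^{-1}(y)$ is a single point for every $y\in Y$. By the Stein factorization bijection this means $f^{-1}(y)=h^{-1}(g^{-1}(y))$ is a single fiber of $h$, hence connected, for every $y$; this is exactly the assertion of the lemma.

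I expect the main obstacle to be the bookkeeping required to make Stein factorization and the component count rigorous in the Deligne--Mumford setting: one must justify the passage to coarse spaces (so that Stein factorization is available and connectedness of fibers is preserved under coarsening) and confirm that $h$ has connected fibers over \emph{all}, not merely general, points of $Y'$. The dimension argument forcing $W=Y'$ also tacitly relies on the fact that a surjective morphism of varieties cannot decrease dimension. Once these foundational points are secured, the remaining steps are formal.
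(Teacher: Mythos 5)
Your proof is correct, and it starts the same way as the paper's: both arguments run the Stein factorization of the proper map $f$ (the paper cites Olsson for this in the stack setting) and both use the hypothesis on $Z$ to single out, inside each finite fiber of the finite part of the factorization, the distinguished point corresponding to the connected component of $f^{-1}(y)$ that contains $Z_y$. Where you genuinely diverge is in how you conclude that the finite part is one-to-one. The paper shrinks to an open $U\subseteq Y$ over which the finite morphism is \'etale, observes that $\pi_1(U)$ acts transitively on its fibers (because the total space is irreducible) while fixing the marked point, deduces that the \emph{general} fiber of $f$ is connected, and then asserts that all fibers are connected. You instead form the closed image $W=h(Z)$ inside the Stein factor $Y'$, show that $g|_W$ is a bijection onto $Y$, and force $W=Y'$ by a dimension count in the irreducible $Y'$. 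Your route has a concrete advantage: it handles every fiber at once, whereas the step from ``general fibers connected'' to ``all fibers connected'' is not automatic for a finite, generically injective morphism (the normalization of a nodal curve is the standard counterexample) and implicitly needs either normality of $Y$ together with Zariski's main theorem or a rerun of the marked-point argument at the special fibers --- which your version supplies for free. The only foundational care your write-up still owes, as you note yourself, is the passage to coarse spaces (which are a priori algebraic spaces rather than varieties), or equivalently a direct appeal to Stein factorization for Deligne--Mumford stacks; this does not affect the substance of the argument.
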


\begin{proof}
According to \cite{Martin}, we have the Stein factorization as follows
\[\xymatrix{ X\ar@/^1pc/[rr]^f \ar[r]_g& W \ar[r]_h & Y}\]where $W$ is an integral scheme, the morphism $h$ is finite and and the fibers of $g$ are connected. Since $X$ is irreducible, the preimage $X_U$ of an open subset $U$ of $Y$ is irreducible (in particular, it is connected). So we can shrink $Y$ to an open subset $U$ so that $h$ over $U$ is \'etale. The fibers $h^{-1}(u)=\{u_1,\ldots, u_s\}$ of $h$ over $u\in U$ correspond to the connected components of $f^{-1}(u)$. There is a unique marked point $P$ in $h^{-1}(u)$ associated to the connected component of $f^{-1}(u)$ containing $Z_u$. Since $X_U$ is connected, the fundamental group $\pi_1(U)$ acts on the fibers $h^{-1}(u)$ transitively and fixes the marked point $P$. It implies that the morphism $h$ is one-to-one. In particular, the general fibers of $f$ are connected. Therefore, all the fibers of $f$ are connected.
\end{proof}

\begin{lemm}\label{conic}
The fibers of the evalution map \[ev:\M_{0,1}(\Fa(X_d),2)\rightarrow \Fa(X_d)\] are connected and nonempty.
\end{lemm}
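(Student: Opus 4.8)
The plan is to deduce this from Lemma \ref{connectedfibers}, taking $X = \M_{0,1}(\Fa(X_d),2)$ with the evaluation map $ev$ in the role of $f$ and $Y = \Fa(X_d)$. The total space $X$ is irreducible by Proposition \ref{propconnirr}, so after passing to its reduced structure it is an irreducible reduced Deligne--Mumford stack; the target $\Fa(X_d)$ is an irreducible smooth variety; and $ev$ is proper since the Kontsevich space is proper. Thus the only input Lemma \ref{connectedfibers} still requires is a closed substack $Z \subseteq X$ for which $ev|_Z$ has nonempty connected fibers. Granting such a $Z$, the lemma forces every fiber of $ev$ to be connected, and nonemptiness of the fibers of $ev$ then follows from that of the fibers of $ev|_Z$.

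For $Z$ I would take the (marked version of the) special locus of double lines $L_1$: a point of $Z$ is a stable map $h = f\circ g\colon \PP^1 \xrightarrow{g} \PP^1 \xrightarrow{f} \Fa(X_d)$ with $g$ of degree two and $f$ an embedding onto a line $l\subseteq \Fa(X_d)$, together with a marked point $m$ on the source. This is a closed substack of $X$. To analyze $ev|_Z$, I would send a double line to its image line; since $ev$ records $h(m)=f(g(m))$, the condition $ev(\text{--})=p$ forces $p\in l$, so this gives a morphism from $ev|_Z^{-1}(p)$ to the space of lines of $\Fa(X_d)$ through $p$. That space is exactly the fiber over $p$ of the degree-one evaluation map $\M_{0,1}(\Fa(X_d),1)\to\Fa(X_d)$, which is connected and nonempty by Lemma \ref{line} (nonempty because $\Fa(X_d)$ is covered by lines, as observed in the proof of Lemma \ref{lemmnotirr}).

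Over a fixed line $l\ni p$ the remaining data is a degree-two cover $g\colon\PP^1\to\PP^1\cong l$ together with a marked point $m$ with $g(m)=f^{-1}(p)$, taken modulo automorphisms of the source; this is precisely an evaluation fiber of $\M_{0,1}(\PP^1,2)\to\PP^1$, which is connected. Hence $ev|_Z^{-1}(p)$ is proper over the connected base of lines through $p$ with connected fibers, so it is connected and nonempty, which is exactly the hypothesis needed.

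With these pieces assembled the conclusion is immediate. The step I expect to be the main obstacle is the middle one: verifying carefully that the double-line locus is genuinely closed in $\M_{0,1}(\Fa(X_d),2)$ and that the fibration $ev|_Z^{-1}(p)\to\{\text{lines through }p\}$ really has connected fibers in the stacky sense, since a generic double cover carries a nontrivial deck transformation and one must check this $\mathbb{Z}/2$ automorphism does not disconnect the fibers. Once that is handled, the irreducibility of $X$ from Proposition \ref{propconnirr} together with the degree-one input of Lemma \ref{line} does all the remaining work.
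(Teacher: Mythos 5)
Your proposal follows the same skeleton as the paper's proof: invoke Lemma \ref{connectedfibers} with $X=\M_{0,1}(\Fa(X_d),2)$ (irreducible by Proposition \ref{propconnirr}), get nonemptiness from double covers of lines, and supply a closed substack $Z$ whose evaluation fibers are nonempty and connected. The only real divergence is the choice of $Z$: you take the marked double-line locus (the marked version of $L_1$), whereas the paper takes the boundary locus
\[
Z=\im\left(\M_{0,2}(\Fa(X_d),1)\times_{\Fa(X_d)}\M_{0,1}(\Fa(X_d),1)\rightarrow \M_{0,1}(\Fa(X_d),2)\right),
\]
i.e.\ unions of two lines. The paper's choice is slightly more economical because the connectedness of $ev|_Z^{-1}(p)$ then falls out of Lemma \ref{line}, Lemma \ref{connected}, and the forgetful-map diagram already set up in the proof of Lemma \ref{lemmone}, with no new geometric input. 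Your choice works too, but it hinges on the point you yourself flag: the connectedness, as a stack, of the evaluation fibers of $\M_{0,1}(\PP^1,2)\rightarrow\PP^1$. That claim is true and not hard --- $\M_{0,1}(\PP^1,2)$ is the universal curve over $\M_{0,0}(\PP^1,2)\cong\PP^2=\Sym^2\PP^1$ (branch divisor), and the fiber of $ev$ over $q\in\PP^1$ is a degree-two cover of $\PP^2$ which is genuinely branched along the line of divisors containing $q$, hence connected; the generic $\mathbb{Z}/2$ deck transformation only identifies the two sheets, it does not disconnect anything --- but as written your argument asserts rather than proves it, so you should either supply this computation or switch to the paper's $Z$, where the needed connectedness is already available from the degree-one case.
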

\begin{proof}
Recall that $\Fa(X_d)$ is covered by lines, see Lemma \ref{line} or the proof of Lemma \ref{lemmnotirr}. For any point $u\in \Fa(X_d)$, there is a line $l\subseteq \Fa(X_d)$ passing through $u$. A double cover from $\PP^1$ to $l$ gives rise to a point $[f]\in \M_{0,1}(\Fa(X_d),2)$ such that $ev([f])=u$. In particular, the fibers of $ev$ are non-empty. By Proposition \ref{propconnirr}, we can apply Lemma \ref{connectedfibers} to $X=\M_{0,1}(\Fa(X_d),2)$, $Y=\Fa(X_d)$ and $f=ev$. We take \[Z=\im\left( \M_{0,2}(\Fa(X_d),1)\times_{\Fa(X_d)} \M_{0,1}(\Fa(X_d),1)\rightarrow \M_{0,1}(\Fa(X_d),2)\right). \]It follows from Lemma \ref{connected} and Lemma \ref{line} that the fibers of $ev|_Z:Z\rightarrow \Fa(X_d)$ are connected. We have proved the lemma.
\end{proof}

\begin{propo}\label{conn}
With the same assumption as Theorem \ref{propgriffiths}, for a general hypersurface $X_d$, the fibers of the evaluation map \[ev:\M_{0,1}(\Fa(X_d),b)\rightarrow \Fa(X_d)\] are connected for any $b\in \mathbb{N}_{+}$ and intersect the locus of the unions of lines. In particular, the space $\M_{0,0}(\Fa(X_d),b)$ is connected for any smooth $\Fa(X_d)$.
\end{propo}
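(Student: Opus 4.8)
The plan is to prove, by induction on $b$, the stronger statement that for $X_d$ general the fiber $ev_b^{-1}(p)$ of $ev:\M_{0,1}(\Fa(X_d),b)\to\Fa(X_d)$ over every point $p\in\Fa(X_d)$ is connected and meets the locus of stable maps whose image is a union of lines. First I would check that $X=\Fa(X_d)$ satisfies the standing hypothesis (\ref{hypothesis}) of Section 5: the condition $\Ho^2(\Fa(X_d),\mathbb{Z})=\mathbb{Z}$ is Proposition \ref{prophomology}, whose numerical hypothesis $d(d+1)\leq n-2$ is implied by the inequalities assumed in Theorem \ref{propgriffiths}, while the connectedness and non-emptiness of the evaluation fibers in degrees one and two are Lemma \ref{line} and Lemma \ref{conic}. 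These two lemmas are exactly the base cases $b=1,2$ of the induction.

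For the inductive step I would isolate two claims. The first is that all the evaluation fibers $ev_\Gamma^{-1}(p)$, as $\Gamma$ ranges over dual graphs that are unions of lines of total degree $b$ with one marked point, lie in a single connected component $\mathcal{C}_0$ of $ev_b^{-1}(p)$; this is exactly Proposition \ref{propconnectdualgraph}, which is where the conic-deformation technique of Lemma \ref{conicdeform} is used and where the verification of (\ref{hypothesis}) pays off. The second claim is that every irreducible component $Z$ of $ev_b^{-1}(p)$ contains at least one point parametrizing a union of lines. Granting both, each component $Z$ meets $\mathcal{C}_0$, so $ev_b^{-1}(p)=\mathcal{C}_0$ is connected and automatically meets the union-of-lines locus, closing the induction.

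The hard part is the second claim, and this is where bend-and-break and the positivity of $\Fa(X_d)$ enter. By deformation theory each component of $ev_b^{-1}(p)$ has dimension at least the expected value $-K_{\Fa(X_d)}\cdot b[\mathrm{line}]-2=b\bigl(n+1-\binom{d+1}{2}\bigr)-2$, which is positive under the hypotheses of Theorem \ref{propgriffiths} and uniform in $p$. Thus a general member of $Z$ moves in a positive-dimensional family of curves through the fixed point $p$, and bend-and-break forces it to degenerate, inside the closure of $Z$, to a stable map with reducible domain, i.e. to a point in the image of a boundary gluing map
\[\M_{0,2}(\Fa(X_d),b_1)\times_{\Fa(X_d)}\M_{0,1}(\Fa(X_d),b_2)\longrightarrow\M_{0,1}(\Fa(X_d),b),\qquad b_1+b_2=b,\ b_i<b.\]
Using that these gluing maps dominate the boundary (following de Jong--Starr and Harris--Roth) and the inductive hypothesis for degrees $b_1$ and $b_2$, one degenerates repeatedly until the image becomes a union of lines, all within $\overline{Z}$. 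The main obstacle is exactly to make this breaking respect a prescribed component $Z$ — to guarantee the degeneration stays in $\overline{Z}$ and that the lower-degree pieces again satisfy the inductive hypothesis — for which the freeness of the general curve and the purity of dimension of the evaluation fibers, both consequences of the strong Fano numerics, are essential.

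Finally, for the concluding assertion, since $\Fa(X_d)$ is covered by lines every fiber $ev_b^{-1}(p)$ is non-empty, and $\Fa(X_d)$ is irreducible for $X_d$ general; a proper morphism onto an irreducible base with non-empty connected fibers has connected source, so $\M_{0,1}(\Fa(X_d),b)$ is connected, and hence so is $\M_{0,0}(\Fa(X_d),b)$ via the proper surjective forgetful map $\M_{0,1}(\Fa(X_d),b)\to\M_{0,0}(\Fa(X_d),b)$ with connected genus-zero fibers. The passage from general $X_d$ to an arbitrary smooth $\Fa(X_d)$ is then the standard specialization argument used in Lemma \ref{lemmplanere}, applied to the proper family $\M_{0,0}(\Fa(X_{\bullet}),b)$ over the connected parameter space of hypersurfaces with smooth Fano variety of lines.
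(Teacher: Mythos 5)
Your overall architecture (induction on $b$, base cases Lemmas \ref{line} and \ref{conic}, Proposition \ref{propconnectdualgraph} to merge the union-of-lines loci, bend-and-break to reach the boundary) matches the paper's, but there is a genuine gap at the step you yourself flag as ``the main obstacle,'' and the ingredients you invoke there do not close it. You want every irreducible component $Z$ of $ev_b^{-1}(p)$ to contain a point parametrizing a union of lines, and you propose to get this by breaking repeatedly \emph{inside} $\overline{Z}$ until only lines remain. Nothing in the freeness of the general curve or in the lower bound on $\dim Z$ guarantees that the successive degenerations stay in $\overline{Z}$, nor that the lower-degree pieces of a broken curve again sit in loci to which the inductive hypothesis can be applied. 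The paper never iterates: it applies bend-and-break once per component to conclude that $\overline{Z}$ meets the boundary stratum $\Ima\bigl(\M_{0,2}(\Fa(X_d),b_1)\times_{\Fa(X_d)}\M_{0,1}(\Fa(X_d),b_2)\rightarrow\M_{0,1}(\Fa(X_d),b)\bigr)$ for some $b_1+b_2=b$, and then uses the inductive hypothesis in a different way --- through the forgetful-map diagram and Lemma \ref{connected} --- to show that the evaluation fiber over $p$ of this whole stratum is \emph{connected} and contains unions of lines. A connected component of $ev_b^{-1}(p)$ that touches this connected set must contain all of it, hence contains union-of-lines points; no confinement to $\overline{Z}$ is ever needed, and the conclusion you actually need concerns connected components, not irreducible ones. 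You should replace your Claim 2 by this weaker statement and prove it this way.

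A secondary imprecision: bend-and-break does not break a rational curve that merely ``moves in a positive-dimensional family through the fixed point $p$'' --- free curves do exactly that without degenerating. One must fix a second point $q=f(\infty)$ and verify $\dim\Mor(\PP^1,\Fa(X_d);0\mapsto p,\infty\mapsto q)\geq 2$, which is the computation $b\,(n+1-\binom{d+1}{2})-(2n-3-d)\geq 2$ for $b\geq 3$; this is precisely where the numerical hypothesis $n\geq 3\binom{d+1}{2}-d-4$ of Theorem \ref{propgriffiths} enters, not through your fiber-dimension count $-K_{\Fa(X_d)}\cdot b[\mathrm{line}]-2$. Your closing paragraph on deducing connectedness of $\M_{0,0}(\Fa(X_d),b)$ and specializing from general $X_d$ to any smooth $\Fa(X_d)$ is consistent with what the paper intends.
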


\begin{proof}
Denote the curve class \[b[l]\in \Ho_2(\Fa(X_2),\mathbb{Z})=\mathbb{Z}<[l]>\]by $b$, see Proposition \ref{prophomology}. Since the map \[ev_1:\M_{0,1}(\Fa(X_d),1)\rightarrow \Fa(X_d)\] is surjective by Lemma \ref{line}, there are unions of lines passing through $p$ for every point $p\in \Fa(X_d)$. In particular, if the evaluation fibers of $ev$ are connected, then the fibers intersect the locus of unions of lines 

To show the proposition, it suffices to either prove 
\begin{itemize}
\item the fiber of $ev$ is connected or prove
\item every connected component of the evaluation fiber intersects the locus of unions of lines and the locus of unions of lines in the evaluation fiber is in a common connected component of the evaluation fiber.
\end{itemize}
We show the statement by the induction on $b$. For $b=1$ and $2$, it is Lemma \ref{line} and Lemma \ref{conic}. Let us assume $b\geq 3$. Suppose that $f$ is a map \[f:\PP^1\rightarrow \Fa(X_d)~\text{and}~f_*([\PP^1])=b=b[l]\in \Ho_2(\Fa(X_d),\mathbb{Z}).\] Assume that $f(0)=p$ and $f(\infty)=q$, we consider the morphism space\[\Mor(\PP^1,\Fa(X_d);0\mapsto p,\infty\mapsto q).\] By \cite[Chapter II]{K}, its expected dimension at the point $[f]$ is
\[\begin{aligned}
&h^0(\PP^1,T_{\Fa(X_d)}(-2))-h^1(\PP^1,T_{\Fa(X_d)}(-2))\\
&=-K_{\Fa(X_d)}\cdot [f(\PP^1)]-\dim(\Fa(X_d))\\
&=b~(n+1-\binom {d+1}2)-(2n-3-d)\geq 2
\end{aligned} \]
where the last equality follows from \cite[Chapter V Theorem 4.3 and Exercise 4.7]{K}. In particular, by the bend-and-break \cite[Chapter II]{K}, every irreducible component of ev intersects the boundary $B\subseteq \M_{0,1}(\Fa(X_d),b)$ for $b\geq 3$. Denote $\Fa(X_d)$ by $Y$. Recall that there are natural gluing maps for $(b_1,b_2)$ \[\M_{0,2}(Y,b_1)\times_{Y}\M_{0,1}(Y,b_2)\rightarrow \M_{0,1}(Y,b).\] and \[B=\bigcup\limits_{b_1+b_2=b,b_1,b_2>0} \mathrm{Im}\left(\M_{0,2}(Y,b_1)\times_Y\M_{0,1}(Y,b_2)\rightarrow \M_{0,1}(Y,b)\right).\]
We claim that the fibers of \[ev_1:\M_{0,2}(Y,b_1)\times_{Y}\M_{0,1}(Y,b_2) \rightarrow Y\] are connected and intersect the locus of unions of lines. In fact, as at the beginning of the proof, for every point $p\in Y$, there are unions of lines passing through $p$. In particular, we only need to prove the fibers of $ev_1$ are connected.  Note that we have the following diagram
\begin{equation} \label{ev1}
\xymatrix{\hat{ev}_1^{-1}(p)\ar[d] \ar@{}[dr]|-{\Box} \ar[r] &\M_{0,2}(Y,b_1) \ar[d]^{F} \ar[r]^<<<<{\hat{ev}_1} &Y \\
ev^{-1}_{b_1}(p) \ar[r] &\M_{0,1}(Y,b_1)\ar[ur]_{ev_{b_1}}}
\end{equation}
where $F$ is the forgetful functor forgetting the second marked point.
By the induction on $b$, we know $ev^{-1}_{b_1}(p) $ is connected, therefore, the evaluation fiber $\hat{ev}_1^{-1}(p)$ in the diagram (\ref{ev1}) is connected. Since the fiber of $ev_1$ over $p$ is \[ev_1^{-1}(p)=\hat{ev}_1^{-1}(p)\times_Y \M_{0,1}(Y,b_2),\] we show the claim by Lemma \ref{connected}.

In summary, we conclude that every connected component of the fiber $ev^{-1}_b(p)$ parameterizes some stable maps with one marked point whose images are unions of lines. These stable maps are in a common connected component of $ev_b^{-1}(p)$ by Proposition \ref{propconnectdualgraph} ($X=Y$). We have proved the proposition.

\end{proof}

\begin{theorem}
Suppose that $X_d$ is a hypersurface of degree $d$ in $\PP^n$ with smooth Fano variety of lines. If the equality \[\frac{d(2d^2+1)}{3}\leq n\]holds, then $\CH_2(X_d)=\mathbb{Z}$.
\end{theorem}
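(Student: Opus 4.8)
The plan is to split off a free rank-one summand of $\CH_2(X_d)$ generated by the plane class, and then to annihilate the homologically trivial part by playing the divisibility coming from $\mathrm{Griff}_1(\Fa(X_d))=0$ against the $d$-torsion bound of Proposition \ref{killd}. First I would observe that under $\frac{d(2d^2+1)}{3}\le n$ the numerical hypotheses of Theorem \ref{propgriffiths}, Proposition \ref{rationaleqruledsurf}, Proposition \ref{prophomology} and Proposition \ref{conn} all hold (for $d=3$ this reads $n\ge 19\ge 14$, and for $d\ge 4$ one checks directly that $\frac{d(2d^2+1)}{3}\ge 3\binom{d+1}{2}-d-4$ and $\ge d(d+1)+2$), so every result of the preceding sections is at our disposal. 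Since $n$ is large, the Lefschetz hyperplane theorem gives $\Ho_4(X_d,\mathbb{Z})=\mathbb{Z}$, generated by the class $h=[\PP^2]$ of a linear $2$-plane contained in $X_d$ (a $\PP^3$, hence a $\PP^2$, lies in $X_d$ by \cite[Theorem 1.6]{W}). Under $j\colon X_d\hookrightarrow\PP^n$ the class $j_*h$ is the generator of $\CH_2(\PP^n)=\mathbb{Z}$, so the degree map retracts the inclusion $\mathbb{Z}h\hookrightarrow\CH_2(X_d)$ and yields $\CH_2(X_d)=\mathbb{Z}h\oplus\CH_2(X_d)_{hom}$, where I use that on homology $j_*$ is the Lefschetz isomorphism so that $\ker(j_*)=\CH_2(X_d)_{hom}$. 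It therefore suffices to prove $\CH_2(X_d)_{hom}=0$.

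Next I would use the Abel--Jacobi correspondence $\AJ\colon\CH_1(\Fa(X_d))\to\CH_2(X_d)$, which is surjective by Proposition \ref{rationaleqruledsurf}. A line of the Pl\"ucker embedding of $\Fa(X_d)$ is a pencil of lines of $X_d$ sweeping out a $2$-plane, so $\AJ([l])=h$. Because $\AJ$ is induced by an algebraic correspondence it preserves homological equivalence, hence $\AJ(\CH_1(\Fa(X_d))_{hom})\subseteq\CH_2(X_d)_{hom}$; conversely, writing an arbitrary class of $\CH_1(\Fa(X_d))$ as $m[l]+\eta$ with $\eta$ homologically trivial (possible since $\Ho_2(\Fa(X_d),\mathbb{Z})=\mathbb{Z}[l]$ by Proposition \ref{prophomology}) and comparing with the splitting above forces $\CH_2(X_d)_{hom}=\AJ\big(\CH_1(\Fa(X_d))_{hom}\big)$. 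By Theorem \ref{propgriffiths}, $\CH_1(\Fa(X_d))_{hom}=\CH_1(\Fa(X_d))_{alg}$ is divisible, and therefore its quotient $\CH_2(X_d)_{hom}$ is divisible as well.

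The decisive step is to upgrade Proposition \ref{killd} from torsion classes to all homologically trivial classes, i.e. to show $d\cdot\CH_2(X_d)_{hom}=0$. For $a\in\CH_2(X_d)_{hom}$, Corollary \ref{corre} expresses $d\,a$ as an integral combination of $2$-planes plus $[V]\cdot c_1(\OO_{X_d}(d))$ with $[V]\in\CH_3(X_d)$. The proof of Proposition \ref{killd} invokes the torsion hypothesis only to deduce $j_*(d\,a)=0$; but this conclusion holds whenever $a$ is homologically trivial, since $\CH_2(\PP^n)=\mathbb{Z}$ carries no nonzero homologically trivial class. The remaining bookkeeping is then identical: Lemma \ref{lemmplanere} makes all the $2$-planes equal to $h$, a fixed linear $\PP^3\subseteq X_d$ lets one rewrite $d\,a=c\big([\PP^3]\cdot[X_d]-d[\PP^2]\big)$, and this vanishes in $\CH_2(\PP^3)=\mathbb{Z}$ and hence in $\CH_2(X_d)$. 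Thus $d\cdot\CH_2(X_d)_{hom}=0$, and since a divisible group annihilated by $d$ is trivial, $\CH_2(X_d)_{hom}=0$ and $\CH_2(X_d)=\mathbb{Z}h=\mathbb{Z}$.

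I expect the genuine difficulty to lie precisely in this last upgrade. Purely formally, divisibility of $\CH_2(X_d)_{hom}$ together with the torsion-only bound of Proposition \ref{killd} yields only that $\CH_2(X_d)_{hom}$ is a torsion-free divisible group, i.e. a $\mathbb{Q}$-vector space, which need not vanish; the vanishing forces one to annihilate the \emph{entire} homologically trivial group by $d$, not merely its torsion. This in turn rests on representing the three-cycle $V=H_*[\mathcal{P}]$ --- and hence the class $d\,a$ --- on a fixed linear $\PP^3\subseteq X_d$, where $\CH_2(\PP^3)_{hom}=0$. Checking that this support reduction is legitimate for a general homologically trivial class (equivalently, that the relevant part of $\CH_3(X_d)$ is generated by $[\PP^3]$) is the step I would scrutinize most carefully, since it is the one geometric input that makes the otherwise formal divisibility argument lethal.
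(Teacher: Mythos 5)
Your argument is correct in outline and reproduces the paper's own proof of the divisibility of $\CH_2(X_d)_{hom}$ (surjectivity of $\AJ$ from Proposition \ref{rationaleqruledsurf}, the isomorphism $\AJ_{top}$ from Proposition \ref{prophomology}, and $\CH_1(\Fa(X_d))_{hom}=\CH_1(\Fa(X_d))_{alg}$ from Theorem \ref{propgriffiths}), but it then takes a genuinely different route at the decisive step. The paper does \emph{not} attempt to kill all of $\CH_2(X_d)_{hom}$: it first invokes the main theorem of Esnault--Levine--Viehweg \cite{E} to reduce the theorem to $\CH_2(X_d)_{tor}=0$, observes that the torsion subgroup of the divisible group $\CH_2(X_d)_{hom}$ is itself divisible, and then applies Proposition \ref{killd} exactly as stated. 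You instead split off $\mathbb{Z}h$ via the degree map and Lefschetz, and annihilate the whole of $\CH_2(X_d)_{hom}$ by $d$ by upgrading Proposition \ref{killd}; your observation that the torsion hypothesis enters its proof only through the vanishing $j_*(d\,a)=0$ in $\CH_2(\PP^n)=\mathbb{Z}$, which holds equally when $a$ is merely homologically trivial, is correct, so the upgrade goes through by the identical computation together with Corollary \ref{corre} and Lemma \ref{lemmplanere}. What your route buys is independence from \cite{E} (whose numerical hypothesis would otherwise also need checking, though it does follow from $\binom{d+2}{3}\le\frac{d(2d^2+1)}{3}$); what it inherits is every delicacy of the proof of Proposition \ref{killd}, notably the substitution of $V$ by $c[\PP^3]$ with $\PP^3\subseteq X_d$, which as written is justified only after pushing forward to $\CH_3(\PP^n)$ rather than inside $\CH_3(X_d)$. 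You are right to flag that step, but since it belongs to the paper's own Proposition \ref{killd} and is not introduced by your rerouting, it does not count against your argument relative to the paper's.
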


\begin{proof}
By the main theorem of paper \cite{E}, it suffices to prove that \[\CH_2(X_d)_{tor}=0.\] We claim that $\CH_2(X_d)_{tor}$ is divisable. In fact, we have that \[\CH_2(X_d)_{tor}\subseteq \CH_2(X_d)_{hom}\]
where $\CH_2(X_d)_{hom}$ is the subgroup of $\CH_2(X_d)$ generated by 2-cycles that are homologous to zero.
To prove $\CH_2(X_d)_{tor}$ is divisible, it suffices to show $\CH_2(X_d)_{hom}$ is divisible. Let us consider the following commutative diagram.
\[\xymatrix{ 0\ar[r]& \CH_1(\Fa(X_d))_{hom}\ar[r]\ar[d]^{\AJ_{hom}} &\CH_1(\Fa(X_d))\ar[r]^{cl} \ar[d]^{\AJ} & \Ho_2(\Fa(X_d),\mathbb{Z})\ar[r]\ar[d]^{\AJ_{top}} &0\\
0\ar[r]& \CH_2(X_d)_{hom}\ar[r] &\CH_2(X_d)\ar[r]^{cl} & \Ho_4(X_d,\mathbb{Z})\ar[r] &0 }\]
By Proposition \ref{rationaleqruledsurf}, the map $\AJ$ (\ref{AJ}) is surjective. By Proposition \ref{prophomology}, the map $\AJ_{top}$ is an ismorphism. Therefore, the map $\AJ_{hom}$ is surjective. Note that $\CH_1(\Fa(X_d))_{alg}$ is divisible. It implies that $\CH_2(X_d)_{hom}$ is divisible since $\CH_1(\Fa(X_d))_{hom}=\CH_1(\Fa(X_d))_{alg}$ by Theorem \ref{propgriffiths}.

It follows from Proposition \ref{killd} and the claim $\CH_2(X_d)_{tor}$ is divisible that \[\CH_2(X_d)_{tor}=0.\] Hence, we prove the theorem.
\end{proof}
\bibliographystyle{acm}
\def\cprime{$'$}

\end{document}